\definecolor{darkgreen}{rgb}{0,0.5,0}
\definecolor{darkred}{rgb}{0.7,0,0}
\theoremstyle{plain}
\newtheorem{lemma}{Lemma}[section]
\newtheorem{thm}[lemma]{Theorem}
\theoremstyle{definition}
\newtheorem{rmk}[lemma]{Remark}
\newtheorem{ass}[lemma]{Assumption}
\numberwithin{equation}{section}
\newcommand{\al}{\alpha}
\newcommand{\de}{\delta}
\newcommand{\ka}{\kappa}
\newcommand{\La}{\Lambda}
\newcommand{\si}{\sigma}
\newcommand{\Si}{\Sigma}
\renewcommand{\th}{\theta}
\newcommand{\R}{\ensuremath{{\mathbb R}}}
\newcommand{\downto}{\downarrow}
\newcommand{\upto}{\uparrow}
\DeclareMathOperator{\inj}{inj}
\newcommand{\norm}[1]{\Vert#1\Vert}  
\newcommand{\bnorm}[1]{\big\Vert#1\big\Vert}
\def\osc{\mathop{{\mathrm{osc}}}\limits}
\newcommand{\arsinh}{{\rm arsinh}}
\newcommand{\brmk}{\begin{rmk}}
\newcommand{\ermk}{\end{rmk}}
\newcommand{\partref}[1]{\hbox{(\csname @roman\endcsname{\ref{#1}})}}
\newcommand{\fs}{\text{.}}
\newcommand{\cm}{\text{,}}
\newcommand{\Eucl}{\mathrm{E}}
\DeclareMathOperator{\Area}{Area}
\newcommand{\gE}{{g_\Eucl}}
\newcommand{\DD}[1]{\partial_{#1}}
\newcommand{\beq}{\begin{equation}}
\newcommand{\eeq}{\end{equation}}
\newcommand{\beqs}{\begin{equation*}}
\newcommand{\eeqs}{\end{equation*}}
\newcommand{\beqa}{\begin{equation}\begin{aligned}}
\newcommand{\eeqa}{\end{aligned}\end{equation}}
\newcommand{\beqas}{\begin{equation*}\begin{aligned}}
\newcommand{\eeqas}{\end{aligned}\end{equation*}}
\newcommand{\half}{\frac{1}{2}}
\newcommand{\thalf}{\tfrac{1}{2}} 
\newcommand{\dist}{\text{\textnormal{dist}}}
\renewcommand{\i}{\mathrm{i}}
\newcommand{\pt}{\partial_t}
\newcommand{\A}{\ensuremath{{\mathcal A}}}
\newcommand{\abs}[1]{{\lvert#1\rvert} }
\newcommand{\babs}[1]{{\big\lvert#1\big\rvert} } 
\newcommand{\eps}{\varepsilon}
\newcommand{\na}{\nabla}
\newcommand{\Hol}{{\mathcal{H}}} 
\newcommand{\Col}{\mathcal{C}}
\newcommand{\thin}{\text{-thin}}
\newcommand{\Ima}{\text{Im}}
\newcommand{\Rea}{\text{\textnormal{Re}}}
\newcommand{\dbar}{\bar\partial}
\newcommand{\dvg}{dv_g}
\newcommand{\BM}{\mathcal{B}}
\newcommand{\Cyl}{{\mathscr{C}}}
\newcommand{\Atild}{{\mathcal{A}}}
\newcommand{\Btild}{{\widetilde{\mathcal{B}}}}
\newcommand{\Leb}{{\mathcal{L}}}
\newcommand{\Nt}{{\widetilde{N}}}
\newcommand{\Nth}{{\widetilde{N_c}}}
\newcommand{\gNt}{{g_{\Nt}}}
\newcommand{\Len}{\mathscr{L}}
\title
[{\sc
Finite-time degeneration for variants of Teichm\"uller harmonic map flow }]
{{\sc
Finite-time degeneration for variants \\ of Teichm\"uller harmonic map flow }
\\ 
}
\author{Craig Robertson and Melanie Rupflin}
\date{\today}
\begin{document}
\begin{abstract}
We consider the question of whether solutions of variants of Teichm\"uller harmonic map flow from surfaces $M$ to general targets can degenerate in finite time. For the original flow from closed surfaces of genus at least $2$, as well as the flow from cylinders, we prove that such a finite-time degeneration must occur in situations where the image of thin collars is `stretching out' at a rate of at least $\inj(M,g)^{-(\frac14+\de)}$, and we construct targets in which the  flow from cylinders must indeed degenerate in finite time. For the rescaled Teichm\"uller harmonic map flow, the condition that the image stretches out is not only sufficient but also necessary and we prove the following sharp result: Solutions of the rescaled flow cannot degenerate in finite time if the image stretches out at a rate of no more than $\abs{\log(\inj(M,g))}^{\frac12}$, but must degenerate in finite time if it stretches out at a rate of  at least $\abs{\log(\inj(M,g))}^{\frac12+\de}$ for some $\de>0$.
\end{abstract}

\maketitle

\section{Introduction and Results}
Teichm\"uller harmonic map flow is a geometric flow, introduced by Topping and the second author in \cite{RT},
that is designed to flow surfaces to minimal surfaces. 
Given any closed orientable surface $M$  and any compact Riemannian manifold $(N,g_N)$, the flow evolves both a map $u\colon M\to N$ and a constant curvature metric $g$ on the domain $M$ (with $K_g\equiv 1,0,-1$ for surfaces of genus $\gamma=0,1, \geq 2$) by the gradient flow of the Dirichlet energy 
$E(u,g)=\frac12\int_M\abs{du}_{g}^2 \, dv_g.$
Solutions of the flow are characterised by 
\beq 
\label{eq:flow}
\pt u=\tau_g(u), \qquad \pt g=\frac{\eta^2}{4}P_g(\Rea(\Phi(u,g)))\eeq
where $\tau_g(u)$ is the tension of $u$, $\Phi(u,g)$ the Hopf differential, $P_g$ denotes the $L^2$-orthogonal projection onto the space that is formed by the real parts of holomorphic quadratic differentials and $\eta>0$ is a fixed coupling constant. We refer to \cite{RT} for more detail on the definition of the flow. 
The results from the joint works \cite{RT,Rexistence,HRT,RT-global} of Topping (respectively Huxol and Topping) and the second author
establish that for any initial data, the flow \eqref{eq:flow} admits a global weak solution which decomposes the initial map into a collection of minimal surfaces. This solution is smooth away from finitely many singular times and we distinguish between two different types of possible finite-time singularities: If the 
injectivity radius remains bounded away from zero as $t$ approaches a singular time $T<\infty$, then the metric component of the flow remains regular across $T$ while the singularity of the map component is caused by the bubbling off of a finite collection of harmonic spheres, a phenomenon that is well known already from the theory of classical harmonic map flow, c.f.~\cite{Struwe85}. Here we are concerned with the second type of possible finite-time singularities in which the metric degenerates due to the injectivity radius of $(M,g)$ tending to zero as $t$ approaches a finite time $T$.

We remark that this second type of finite-time singularities is excluded if the domain is a torus, in which case an equivalent flow was already introduced and studied by Ding, Li and Liu in \cite{Ding-Li-Liu}, since the Teichm\"uller space of the torus (equipped with the Weil--Petersson metric) is complete. 

For higher genus surfaces, the delicate analysis in \cite{RT-neg} guarantees that if the target $(N,g_N)$ has non-positive curvature, or more generally if the target supports no bubbles, then solutions of \eqref{eq:flow} cannot degenerate in finite time. 
This naturally raises the question of whether solutions of the flow can degenerate in finite time at all and if so, what mechanism can lead to such a finite-time degeneration. 

Before we formulate our answers to the above questions, we remark that a careful analysis of the flow at such potential finite-time degenerations has been carried out in the joint works \cite{ RT-horizontal, RT-global} of Topping and the second author and recall the following properties of solutions $(u,g)$ of the flow: If $\inj(M,g(t))\to 0$ as $t\upto T$ for some $T<\infty$, then there are a finite number of geodesics $\si_j(t)$ in $(M,g(t))$, with $\si_j(t)$ homotopic to $\si_j(t')$, whose length $\ell_j(t)=L_{g(t)}(\si_j(t))\to 0$ as $t\upto T$. These geodesics are surrounded by long thin collar neighbourhoods $\Col(\si_j(t))$
 which are isometric to cylinders $\big([-X(\ell_j(t)),X(\ell_j(t))]\times S^1, \rho^2(s) (ds^2+d\th^2)\big)$, compare Lemma \ref{lemma:collar} in the appendix. On the thick part of these collars, and indeed on the thick part of the whole surface, the metric converges as described in \cite{RT-horizontal, RT-global} and so does the map component, up to the usual bubbling off of harmonic spheres. The  results of \cite{RT-global} also ensure that away from finitely many regions where further bubbles may form, the map component $u$  maps the degenerating part of the collar very close to curves in the sense that the oscillation of $u$ over circles $\{s\}\times S^1$ tends to zero, see \cite{RT-global} for details. As already seen in \cite{RT-neg} we furthermore know that the behaviour of the map component $u$ away from the degenerating part of a collar has hardly any influence on the evolution of the length of the central geodesic of such a thin collar.

 This raises the question of what behaviour of the map component on the degenerating parts of a collar can be responsible for a degeneration of the metric. 
As we shall see, the driving force for a finite-time degeneration of the flow is not merely the formation of a bubble on a very thin collar in the domain but rather a very stretched-out image in the target of such a thin collar. 
It is hence the behaviour of the map component on the long cylinders connecting different bubble regions on the degenerating part of the collar (or connecting them to the thick part of the surface) that can force the flow to degenerate. To be more precise,  
we shall see that if the length of one of the curves in the target close to which these cylinders are mapped is large compared to $\ell^{-\frac14}$, $\ell$ the length of the central geodesic of the corresponding collar in the domain, then finite-time degeneration must occur. 
Such a phenomenon can e.g.~be caused by the formation of a winding bubble on the collar as we shall discuss in detail in Section \ref{section:cyl}, compare also \cite{T-winding}.

As a measure of how long these curves in the image are we shall consider
\beq \Len(u,\Col(\si)):=\int_{-X(\ell)}^{X(\ell)} \fint_{S^1} \abs{u_s} \, d\th \, ds, \label{def:L} \eeq
 $(s,\th)$ collar coordinates on the collar $\Col(\si)$ around a simple closed geodesic $\si$, c.f.~Lemma \ref{lemma:collar}. 

Our first  main result establishes that degeneration in finite time must occur if $\Len(u,\Col(\si))$ is of order $\ell^{-(\frac14 +\de)}$ for some $\de>0$.
We stress that we do not ask that $\Len(u,\Col(\si))$ becomes unbounded, which would already amount to imposing a priori that the solution degenerates in finite time, but just ask for a suitable relation between $\Len(u,\Col(\si))$ and the length of the central geodesic of the corresponding collar, and of course that $\ell$ is below some fixed threshold. For suitable targets, such as the warped products that we construct in Section \ref{section:cyl}, these properties can be proven to hold true for all maps with suitable symmetries and energy bounds, which will allow us to obtain examples of finite-time degeneration in Section \ref{section:cyl}.

Our main results, which we state in Theorems \ref{thm:1} and \ref{thm:2} below,  apply not only for solutions of the flow \eqref{eq:flow}, respectively \eqref{eq:flow-resc}, on closed surfaces of genus at least $2$, but also for solutions  on cylinders. We recall the relevant properties of the flow \eqref{eq:flow} from cylinders introduced in \cite{R-cyl}, which is designed to flow to minimal surfaces with prescribed boundary curves, in Appendix \ref{appendix:cyl} and note that in this case one considers \eqref{eq:flow} together with Plateau boundary conditions. As our analysis will be carried out in the interior of the domain, the specific nature of the boundary conditions is  however irrelevant in our proofs, and all we will require of solutions of \eqref{eq:flow} is that they
satisfy an energy inequality of the form 
\beq \label{ass:energy-ineq} 
E(u(t_1),g(t_1))-E(u(t_2),g(t_2))\geq c \int_{t_1}^{t_2}\norm{\partial_t u}_{L^2(M,g)}^2+\norm{\partial_t g}_{L^2(M,g)}^2\, dt \text{ for some } c>0
\eeq
for almost every $t_1<t_2$, as expected for a gradient flow, and, in case $N$ is non-compact, that for almost every $t$ the image of $u(t)$ is bounded. 
We note that these properties are satisfied 
in particular for the solutions obtained in \cite{Rexistence, RT-global,R-cyl}, and note that for the (unique) weak solutions of the flow on closed domains which have non-increasing energy we indeed have 
\beq \label{eq:energy-decay}
 \frac{dE}{dt}=-\int_M 
|\tau_g(u)|^2+\left(\frac{\eta}{4}\right)^2 |P_g(\Rea(\Phi(u,g)))|^2 \,\dvg,
\eeq
away from finitely many singular times.

\begin{thm} \label{thm:1}
Let $M$ be either a closed oriented surface of genus $\gamma\geq 2$ or a cylinder, let $(N,g_N)$ be a complete Riemannian manifold with sectional curvature bounded from above by some 
$\bar\kappa\in \R$, and let $E_0<\infty$, $c_0>0$, and $\delta\in(0,1)$ be any fixed numbers. Then there exists a number $\bar \ell>0$ 	depending only on $E_0$, $c_0$, $\de$, the genus of $M$ (respectively the fixed constant $d$ in the definition \eqref{def:Gell} of the flow on cylinders) and $\bar\kappa$  so that the following holds true:
\newline
Let $(u,g)$ be a weak solution of \eqref{eq:flow} with
$E(u,g)\leq E_0$ which satisfies the energy inequality \eqref{ass:energy-ineq} and which is defined on an interval $[0,T)$ that is maximal in the sense that $T=\infty$ or $\inf_{t\in[0,T)}\inj(M,g(t))=0$. 
Suppose that there exist a homotopically non-trivial simple closed curve $\si_0$ in $M$, a number $ \eps_1>0$, and a time $T_1<T$ so that  for every $t\in[T_1,T)$
\begin{enumerate}[label=(\roman*),ref=\roman*,nosep]
\item \label{ass:ell_0}
the length $\ell(t)$ of the simple closed geodesic $\si(t)$ homotopic to $\si_0$ satisfies $\ell(t) \leq \bar \ell$,\\
and 
\item \label{ass:tens_gives_length}
for those $t\in [T_1,T)$ for which $\norm{\tau_g(u)(t)}_{L^2(M,g(t))}\leq \eps_1 \cm
$ 
we  furthermore have that
\beqs 
\Len(u(t),\Col(\si(t))\geq c_0 \, \ell(t)^{-\frac{1}{4}(1+\delta)},\qquad  \text{ where } \Len \text{ is defined in \eqref{def:L}}. \eeqs
\end{enumerate}
Then $T<\infty$, i.e.~the solution $(u,g)$ of the flow \eqref{eq:flow} must degenerate in finite time. 
\end{thm}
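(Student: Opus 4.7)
The plan is to derive a differential inequality for the length $\ell(t)$ of the simple closed geodesic homotopic to $\si_0$ which forces $\ell$ to reach zero in finite time. Since $\ell$ is a smooth function on Teichm\"uller space, the metric evolution $\pt g=\frac{\eta^2}{4}P_g(\Rea(\Phi(u,g)))$ translates, via the length-variation formulas developed in \cite{RT-neg,RT-horizontal,RT-global}, into
\[
\frac{d\ell}{dt}=-\frac{\eta^2}{4}\big\langle\grad_g\ell,\,\Rea(\Phi(u,g))\big\rangle_{L^2(M,g)},
\]
where $\grad_g\ell$ is the real part of an explicit length-pinching holomorphic quadratic differential, concentrated on the collar $\Col(\si)$, whose profile (and in particular its $L^2$-pairing weight against $\Rea(\Phi)$) in collar coordinates $(s,\theta)$ is known to depend on $\ell$ in a controlled way. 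The strategy is to combine this identity with hypothesis (\ref{ass:tens_gives_length}) to obtain, at low-tension times, a dissipation inequality of the shape $\dot\ell\le -C\ell^{\alpha}$ for some fixed $\alpha<1$, which integrates to finite-time extinction of $\ell$.

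The first step converts the $L^1$-type assumption on $\Len$ into an $L^2$ lower bound for $u_s$ on the collar. Cauchy--Schwarz and Jensen give
\[
\Len(u,\Col(\si))^2\leq 2\pi\,X(\ell)\int_{\Col(\si)}|u_s|^2\,ds\,d\theta,
\]
and, since $2X(\ell)$ is comparable to $|\log\ell|$, hypothesis (\ref{ass:tens_gives_length}) yields $\int_{\Col(\si)}|u_s|^2\,ds\,d\theta\geq C\,\ell^{-\frac12(1+\de)}/|\log\ell|$ at low-tension times. Using $\Rea(\phi)=|u_s|^2-|u_\theta|^2$ in collar isothermal coordinates, I would next show that the $|u_\theta|^2$ piece is negligible compared to the $|u_s|^2$ piece. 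This uses the near-harmonic analysis on thin collars of \cite{RT-global,Rexistence}: when $\norm{\tau_g(u)}_{L^2}\leq\eps_1$, the map on the degenerating part of the collar is close to an $s$-dependent curve (plus bubbles in finitely many controlled regions), so $\int_{\Col}|u_\theta|^2$ is small relative to $\int_{\Col}|u_s|^2$ up to a tension-controlled error. Pairing the resulting lower bound on $\int_{\Col}\Rea(\phi)\,ds\,d\theta$ with the profile of $\grad_g\ell$ then produces the desired low-tension inequality $\dot\ell\leq -C\ell^{\alpha}$ with $\alpha=\tfrac12+\tfrac{\de'}{2}<1$ after absorbing the (small) tension error.

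The main obstacle is handling times $t$ at which $\norm{\tau_g(u)(t)}_{L^2}>\eps_1$, where hypothesis (\ref{ass:tens_gives_length}) is silent. The crucial input here is the energy inequality \eqref{ass:energy-ineq}: the set of such times has Lebesgue measure at most $E_0/(c\eps_1^2)$, and globally $\int\norm{\pt g}_{L^2}^2\,dt\leq E_0/c$, so both the duration and the integrated total variation of $\ell$ contributed by the high-tension set are bounded a priori. I would then run a combination argument: choose $\bar\ell$ small enough (in terms of $E_0$, $c_0$, $\eps_1$, $\eta$, $\de$, and $\bar\ka$) so that on the low-tension set the decay of the antiderivative $F(\ell):=\ell^{1-\alpha}$ satisfies $F(\ell(t))\leq F(\ell(T_1))-C'\,|\{s\leq t:\text{low-tension}\}|+\text{bounded error}$, with the error bounded using $\int\norm{\pt g}_{L^2}^2\,dt$ via Cauchy--Schwarz on the high-tension set. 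Since the low-tension set must have infinite measure on any infinite interval $[T_1,T)$ once the high-tension set has been exhausted, $F(\ell(t))$ would become negative in finite time, a contradiction. This forces $T<\infty$ and hence finite-time degeneration.
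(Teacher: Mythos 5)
Your overall strategy — the length-variation formula, turning the $L^1$ assumption on $\Len$ into an $L^2$ lower bound on $u_s$ via Cauchy--Schwarz, controlling angular energy, and using the energy inequality to control high-tension times — is the same skeleton as the paper. However, the key quantitative step contains a concrete error that makes the argument break down, and fixing it requires exactly the idea you are missing.

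You assert that $2X(\ell)$ is comparable to $|\log\ell|$, but this is false: from \eqref{eq:collar-general}--\eqref{def:constants-collar}, $X(\ell)\sim\pi^2/\ell$. Redoing your global Cauchy--Schwarz with the correct $X(\ell)$ yields only $\int_{\Col}|u_s|^2\,ds\,d\theta\geq c\,\ell\cdot\ell^{-\frac12(1+\de)}=c\,\ell^{\frac12(1-\de)}$, which tends to \emph{zero} as $\ell\to 0$ and is dominated by the trivial upper bound $\int_{\Col}|u_s|^2\le 2E_0$ (conformal invariance). No useful information survives. Moreover, the quantity controlling $\frac{d}{dt}\log(\ell^{-1})$ in \eqref{est:ddl} is not $\int(|u_s|^2-|u_\theta|^2)\,ds\,d\theta$ but the $\rho^{-2}$-weighted integral $\ell\int(|u_s|^2-|u_\theta|^2)\rho^{-2}$, and the $\rho^{-2}$ weight, which is of order $\ell^{-2}$ near the central geodesic, is precisely what makes the estimate nontrivial. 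The paper's crucial move (Lemma \ref{lemma:I_lower_strong}) is to \emph{localize} the Cauchy--Schwarz to the thin subset $K$ of the collar where the injectivity radius is below $\La\ell^{\frac12(1+\de)}$: there the hyperbolic area $\int_{K\times S^1}\rho^2$ is of order $\ell^{\frac12(1+\de)}$, so $\ell\int_{K\times S^1}|u_s|^2\rho^{-2}\geq\ell\,(\int_{K\times S^1}\rho^2)^{-1}(\int_{K\times S^1}|u_s|)^2\gtrsim\ell^{-\de}$ — a bound that genuinely diverges. Your global Cauchy--Schwarz misses this because it never sees the $\rho^{-2}$ weight nor restricts to the thin region where the hyperbolic measure is concentrated.

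A second gap: your claim that "$\int_{\Col}|u_\theta|^2$ is small relative to $\int_{\Col}|u_s|^2$ up to a tension-controlled error" is not a statement one can appeal to wholesale on the full collar — the collar can contain bubble regions where $|u_\theta|^2$ is not small. The paper handles this by decomposing $[-X,X]$ into a good set $\BM$ on which the exponential decay estimate \eqref{est:theta-lower} holds, and a bad set $\BM^c$ of bounded Lebesgue measure $\lesssim 1+\log(\ell^{-1})$ (Lemma \ref{lemma:BM}), and it still needs Lemma \ref{lemma:I_lower_general} — a weak lower bound valid without any assumption on $\Len$ — to control the weighted integral on $\BM^c$ and, separately, to handle the high-tension times where hypothesis \eqref{ass:tens_gives_length} says nothing. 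That weak lower bound, which rests on integrating $\partial_s\Rea(\phi)=\partial_\theta\Ima(\phi)+O(|\tau||du|)$ against $\rho^{-1}$, is absent from your proposal, and without it you have no way to bound $\dot\ell$ from below at high-tension times; the energy inequality alone bounds the measure of those times but not the potential \emph{decrease} of $\log(\ell^{-1})$ there.
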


Here we call $(u,g)$ a weak solution of \eqref{eq:flow} if  $g$ is a Lipschitz curve of hyperbolic metrics that satisfies the second equation in \eqref{eq:flow} for almost every $t$ and 
$u\in H^1([0,T)\times M,N) $ (respectively $u\in H^1_{loc}$ if $T=\infty$) is
 a weak solution of the first equation in \eqref{eq:flow}, where we may assume without loss of generality that $N$ is embedded in some Euclidean space by Nash's embedding theorem.

While it would be interesting to know if the rate $\ell^{-(\frac14+\de)}$ can be improved, a much more important  question is whether the stretching out of collars is not only a sufficient but also a necessary condition for finite-time degeneration, or conversely if there are solutions of \eqref{eq:flow} that degenerate in finite time but for which the quantity $\Len$ introduced in \eqref{def:L} remains bounded. 

For weak solutions of the rescaled flow 
\beq 
\label{eq:flow-resc}
0=\tau_g(u), \qquad \pt g=P_g(\Rea(\Phi(u,g))),\eeq
which was introduced by Huxol in \cite{Huxol}, we shall prove that the flow degenerates if and only if the image is stretching out, and indeed that finite-time degeneration is excluded if $\Len(u,\Col(\si))$ is controlled by 
$\log(\ell^{-1})^\half$, while any growth rate of $\Len(u,\Col(\si))$ of order $\log(\ell^{-1})^{\half+\delta}$, $\de>0$, will force the solution to degenerate in finite time. 
Here we say that $(u,g)$ is a weak solution of \eqref{eq:flow-resc} if the maps $u(t)$ are harmonic with bounded energy and $g$ is a Lipschitz curve of hyperbolic metrics for which \eqref{eq:flow-resc} is satisfied for almost every time. 
\begin{thm}\label{thm:2}
Let $M$ be either a closed oriented surface of genus $\gamma\geq 2$ or a cylinder and let $(N,g_N)$ be a complete Riemannian manifold with sectional curvature bounded from above.
\newline
Let $(u,g)$ be a weak solution of \eqref{eq:flow-resc} defined on an interval $[0,T)$ which is maximal in the sense that $T=\infty$ or $\inf_{[0,T)} \inj(M,g(t))=0$. Then the following hold true: 
\begin{enumerate}[label=(\roman*),ref=\roman*,nosep]
\item \label{item:thm2-1} Suppose that there exist
$C_1<\infty$ and $\bar \ell\in(0,1)$ so that
for every $t\in[0,T)$ and every simple closed geodesic $\si(t)\subset (M,g(t))$ 
with length $\ell(t)=L_{g(t)}(\si(t))\leq \bar \ell$
we have 
\beqs 
\Len(u(t), \si(t))\leq  C_1\log(\ell(t)^{-1})^\half.
\eeqs
Then $T=\infty$, i.e.~the flow cannot degenerate in finite time.
\item \label{item:thm2-2} Conversely for any numbers  $\de, c_0>0$ there exists a constant $\bar \ell\in(0,1)$ (with the same dependencies as in Theorem \ref{thm:1}),
so that the following holds true: 
\newline
Suppose that there 
exists a homotopically non-trivial simple closed curve $\si_0\subset M$ and a number $T_1<T$ so that on 
$[T_1,T)$ we have $\ell(t)=L_{g(t)}(\si(t))\leq \bar \ell$ and 
\beqs 
\Len(u(t), \Col(\si(t)))\geq c_0\log(\ell(t)^{-1})^{\half(1+\de)},\eeqs
$\si(t)$ the unique geodesic in $(M,g(t))$ homotopic to $\si_0$. 
Then $T<\infty$, i.e.~the flow must degenerate in finite time.

\end{enumerate}
\end{thm}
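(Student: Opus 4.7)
The plan is to analyze how the length $\ell(t)=L_{g(t)}(\sigma(t))$ of the distinguished geodesic evolves under the rescaled flow and to reduce both parts to an ODE for $L(t):=\log(1/\ell(t))$. Since $u(t)$ is harmonic, the Hopf differential $\Phi(u(t),g(t))=\phi\,dz^2$ is holomorphic on each thin collar $\Col(\sigma(t))$, so in collar coordinates $(s,\th)$ we have a Laurent expansion $\phi=\sum_{n\in\Z}a_n e^{n(s+i\th)}$ whose constant mode $a_0=\frac{1}{2\pi}\int_{S^1}\phi(s,\th)\,d\th$ is independent of $s$. Since $d\ell$ is represented by a real holomorphic quadratic differential $\Theta_\ell$ with known Wolpert/Riera asymptotics on thin collars, pairing $\Rea(\Phi)$ against $\Theta_\ell$ in $L^2(M,g)$ should yield an identity of the form
\beqs
\ddt \ell = A(\ell)\,\Rea(a_0) + R_\infty,
\eeqs
where $\ell\cdot A(\ell)\to -c_\ast<0$ as $\ell\to 0$ and $R_\infty$ collects the exponentially small contributions from the non-constant Laurent modes at the central geodesic. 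Consequently $\ddt L = -\ddt\ell/\ell\sim c_\ast\,\Rea(a_0)/\ell^2$ up to these exponentially small corrections.

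The central estimate driving both conclusions is a two-sided bound $|\Rea(a_0)|\asymp \Len^2\ell^2$. The $s$-independence of $\Rea(a_0)$ allows one to average over $s\in[-X(\ell),X(\ell)]$ to get
\beqs
\Rea(a_0)=\frac{1}{4\pi X(\ell)}\bigl(\|u_s\|_{L^2(\Col)}^2-\|u_\th\|_{L^2(\Col)}^2\bigr),
\eeqs
while Cauchy--Schwarz applied to \eqref{def:L} yields $\|u_s\|_{L^2(\Col)}^2\geq \pi\Len^2/X(\ell)$. Combined with $X(\ell)\asymp \ell^{-1}$, this delivers the lower bound $\Rea(a_0)\geq c\,\Len^2\ell^2-C\ell\,\|u_\th\|_{L^2(\Col)}^2$ needed for \partref{item:thm2-2}. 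The matching upper bound $|\Rea(a_0)|\leq C\,\Len^2\ell^2+\text{(small)}$ for \partref{item:thm2-1} uses that harmonic maps on long cylinders with bounded energy are approximately $\th$-invariant away from finitely many bubble regions (via the oscillation estimates of \cite{RT-global}), so that on sub-collars $\fint|u_s|^2\,d\th\lesssim(\fint|u_s|\,d\th)^2+(\text{osc-terms})$ with the $\|u_\th\|_{L^2}^2$- and oscillation-contributions decaying exponentially into the interior of the collar.

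Combining these steps yields $|\ddt L|\lesssim \Len^2$ in general, and under the hypothesis of part (ii) the strengthened $\ddt L\gtrsim \Len^2$ with the correct sign, provided $\bar\ell$ is chosen small enough that the lower-order terms are dominated by the $\Len$-contribution. For \partref{item:thm2-1} the hypothesis $\Len\leq C_1 L^{1/2}$ then gives $\ddt L\leq C L$ on $[T_1,T)$, so Gronwall yields $L(t)\leq L(T_1)e^{C(t-T_1)}$ on every bounded time interval, excluding finite-time degeneration. For \partref{item:thm2-2} the hypothesis $\Len\geq c_0\,L^{(1+\de)/2}$ gives $\ddt L\geq c\,L^{1+\de}$, an autonomous ODE whose solutions must blow up in finite time, forcing $T<\infty$.

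The hard part will be the upper bound $|\Rea(a_0)|\lesssim \Len^2\ell^2$ needed for part (i): since the trivial energy bound only yields $|\Rea(a_0)|\lesssim E/X(\ell)\lesssim \ell$, one must establish a quantitative "reverse Cauchy--Schwarz" $\fint|u_s|^2\,d\th\lesssim (\fint|u_s|\,d\th)^2+\text{error}$, uniform in $t$, on sub-collars from which the bubble regions of $u(t)$ have been excised. This forces one to simultaneously verify that removing these (finitely many, but $t$-dependent) bubble regions does not significantly reduce $\Len$, so that the restricted estimate still controls the quantity appearing in the hypothesis. A secondary technical issue is integrating the resulting differential inequality for $L$ across the at most countably many times at which the weak solution may fail to be smooth, which should be handled by the Lipschitz regularity of $g(\cdot)$ together with standard a.e.~differential-inequality arguments.
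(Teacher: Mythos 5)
Your high-level strategy matches the paper's: exploit that for a harmonic map the Hopf differential is holomorphic, so $\psi(s)=\int_{\{s\}\times S^1}|u_s|^2-|u_\theta|^2\,d\theta$ (your $2\pi\Rea(a_0)$) is constant on the collar; feed Wolpert's formula $\frac{d}{dt}\log(\ell^{-1})=\pi\ell^{-2}\psi$; and reduce both parts to an ODE for $L=\log(\ell^{-1})$. The ODE endgame you describe is exactly right.

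The gap is in the central estimate $\psi\asymp\Len^2\ell^2$, and it is not only on the ``upper bound'' side that you flag. Your lower bound for \partref{item:thm2-2} reads $\psi\geq c\,\Len^2\ell^2 - C\ell^{-1}\|u_\theta\|_{L^2(\Col)}^2\cdot\ell^2$, i.e.\ $\psi\geq c\,\Len^2\ell^2 - C'\ell\,\|u_\theta\|_{L^2(\Col)}^2$, but you never bound the second term. Even with the sharpest available information --- exponential decay of angular energy away from the (finitely many) bubble intervals, whose total length is $O(\log\ell^{-1})$ --- one only gets $\|u_\theta\|_{L^2(\Col)}^2\lesssim\log(\ell^{-1})$, hence an error of order $\ell\log(\ell^{-1})$. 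In the regime relevant to \partref{item:thm2-2}, $\Len\sim(\log\ell^{-1})^{(1+\delta)/2}$, so the ``main term'' $\Len^2\ell^2\asymp\ell^2(\log\ell^{-1})^{1+\delta}$ is \emph{much smaller} than this error as $\ell\to0$, and your Cauchy--Schwarz lower bound is negative. The problem is structural: averaging $\psi=\frac{1}{2X(\ell)}\int\psi$ over the whole collar and then applying Cauchy--Schwarz to $\Len$ dilutes the signal; the global $\|u_\theta\|_{L^2}^2$ is simply too big relative to $\Len^2\ell^2$. The same issue, which you correctly identify, makes the upper bound fail with this method: the trivial bound is only $\psi\lesssim\ell$, far off from $\ell^2\log(\ell^{-1})$.

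The paper's route (Lemma~\ref{lemma:psi-harmonic}) is genuinely finer and is the crux you are missing. It first proves a new $H^2$ estimate (Lemma~\ref{lemma:second-der}) where the right-hand side involves only the \emph{angular} energy and the tension, not the full energy. Combined with the exponential decay of $\vartheta$, this yields that both $\vartheta(s)$ and $\int|\nabla^2u|^2$ are $O(\ell^4)$ on a set $\Btild$ whose complement has length $O(\log\ell^{-1})$. With this one obtains the \emph{pointwise} reverse Cauchy--Schwarz inequality
\beqs
\psi^{\half}\babs{\psi^{\half}-\sqrt{2\pi}\smallint_{s-\half}^{s+\half}\al}\leq C\ell^2\big[\psi^\half+\ell^2\big]\quad\text{for }s\in\Btild,
\eeqs
i.e.\ $\psi^{1/2}$ is well-approximated by the local average of $\al=\fint|u_s|$ at each good point, with error only $O(\ell^2)$. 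Averaging this relation over $\Btild$ (and using $\mathcal{L}^1(\Btild)\asymp\ell^{-1}$ and that only $O(\log\ell^{-1})$ worth of $\al$ mass is lost on $\Btild^c$) gives the two-sided comparison $\psi^{1/2}\approx\ell\Len$ up to $O(\ell^2)$ error, which is precisely what you need in both parts, and avoids ever confronting the $\|u_\theta\|_{L^2(\Col)}^2$ term globally. Without Lemma~\ref{lemma:second-der} and this pointwise comparison, neither direction of the estimate goes through.
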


In Section \ref{section:cyl} we will construct settings in which 
our main results apply, thus establishing that finite-time degeneration does indeed occur for both variants of the flow  from cylinders.

\begin{thm} \label{thm:cyl}
Let $(N,g_N)$ be a complete Riemannian manifold of bounded sectional curvature as constructed in  Section \ref{section:cyl} and let  
$u_0 \colon [-1,1]\times S^1\to N$ and $g_0$ be any pair of initial map and metric as described in Lemma \ref{lemma:initial}. 
Then the solution of \eqref{eq:flow}  degenerates in finite time.\newline
Furthermore, choosing $(N_c,g_{N_c})$ to be a compact 
Riemannian manifold as constructed in the last part of Section \ref{section:cyl} we have that any weak solution of the rescaled flow \eqref{eq:flow-resc} with map component as described in Remark \ref{rmk:flow-rescaled} must degenerate in finite time.
\end{thm}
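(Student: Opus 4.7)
The plan is to reduce the statement to a direct verification of the hypotheses of Theorems \ref{thm:1} and \ref{thm:2} for the specifically chosen target and initial data, exploiting the symmetry/homotopy structure of the construction to produce the required lower bounds on $\Len$.

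The target $N$ of Section \ref{section:cyl} is built as a warped product whose rotational symmetry matches the $S^1$-factor of the domain cylinder, and the Plateau boundary curves together with the initial map $u_0$ are $S^1$-equivariant in a non-trivial winding class around a distinguished loop $\gamma_0\subset N$. Because both $\tau_g(u)$ and $P_g(\Rea(\Phi(u,g)))$ inherit any isometric symmetry present in both $u_0$ and $g_0$, solutions of \eqref{eq:flow} and \eqref{eq:flow-resc} remain $S^1$-equivariant on $[0,T)$, and the $\pi_1$-winding class of $u(t)$ is preserved away from the discrete bubbling times. The warped-product geometry of $N$ is engineered so that no map representative of this class can be contracted: any loop in the prescribed class has length bounded away from zero by a positive constant $L_0$.

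Next I would show that assumption \eqref{ass:ell_0} of Theorems \ref{thm:1} and \ref{thm:2} kicks in at some finite time. On the cylinder domain the horizontal subspace of Teichmüller deformations is essentially one-dimensional and is spanned by the infinitesimal deformation that changes $\ell$, so the formula for $\tfrac{d}{dt}\ell$ in terms of $P_g(\Rea\,\Phi(u,g))$ controls the monotone decrease of $\ell(t)$ as long as the map is not harmonic on the collar. Combined with the energy-decay inequality \eqref{ass:energy-ineq} (respectively with the fact that along the rescaled flow \eqref{eq:flow-resc} the map is harmonic and the metric moves only in the length-variable), the boundary condition and the obstruction $L_0>0$ imply that $\ell(t)$ cannot stay bounded below: otherwise on the infinite interval $[0,T)=[0,\infty)$ the flow would dissipate infinite energy, or else would equilibrate at an $S^1$-equivariant harmonic map of non-trivial class, forcing the metric to keep decreasing $\ell$ by the Hopf-differential identity. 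Hence for some $T_1<T$ we have $\ell(t)\le\bar\ell$ for all $t\in[T_1,T)$.

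The core step is the verification of hypothesis \eqref{ass:tens_gives_length}, i.e.\ the lower bound on $\Len(u(t),\Col(\sigma(t)))$. For an $S^1$-equivariant map $v\colon \Col(\sigma)\to N$ in the fixed winding class, $S^1$-equivariance forces each $\{s\}\times S^1$-circle to be mapped to an $S^1$-orbit in $N$, and the winding condition forces this orbit to traverse $\gamma_0$ a prescribed non-zero number of times, giving a pointwise-in-$s$ lower bound on $\int_{S^1}|u_\theta|\,d\theta$. The warping function is chosen so that forcing this winding together with having energy $\le E_0$ constrains the map to cross a very thin ``waist'' of $N$ where $|u_s|$ must be large, and a direct integration along the collar then yields
\beqs
\Len(v,\Col(\sigma))\ge c_0\,\ell^{-\frac14(1+\delta)}
\eeqs
for \eqref{eq:flow}, respectively $c_0\,|\log\ell|^{\frac12(1+\delta)}$ for \eqref{eq:flow-resc}. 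At times when $\norm{\tau_g(u(t))}_{L^2}\le\eps_1$, the map $u(t)$ is close in $H^1$ to a genuine $S^1$-equivariant harmonic map in the prescribed class (use the sub-convergence coming from the energy inequality together with the uniform curvature bound on $N$), and that limit saturates the above bound; for the rescaled flow \eqref{eq:flow-resc} there is nothing to approximate because $u(t)$ is already harmonic.

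The main obstacle is calibrating the warping profile of $N$ (and $N_c$) so that the winding-energy estimates saturate the sharp thresholds $-\frac14(1+\delta)$ and $\frac12(1+\delta)$ of Theorems \ref{thm:1} and \ref{thm:2}: one must choose the waist function so that any equivariant map in the given class whose energy is $\le E_0$ has $\Len$ at least as large as the target rate, yet admits such maps at all so that the initial data $(u_0,g_0)$ of Lemma \ref{lemma:initial} exist. Once this calibration is in place, the hypotheses of Theorems \ref{thm:1} and \ref{thm:2} are both met on $[T_1,T)$, and the conclusion $T<\infty$ follows immediately for both flows, completing the proof.
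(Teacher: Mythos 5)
Your high-level strategy (verify the hypotheses of Theorems \ref{thm:1} and \ref{thm:2} for the constructed target and initial data) is right, but the mechanism you propose for the lower bound on $\Len$ is not the paper's mechanism and, as stated, does not actually produce the required estimate.

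The central error is where you attribute the stretching of the image to a non-contractible loop $\gamma_0$ and a ``thin waist'' in $N$. The paper's $\Nt$ has no waist and no $\pi_1$ obstruction: it is $\R^3$ with a conformal metric concentrated inside the unit ball. The constraint is instead a continuity argument (Lemma \ref{lemma:disjoint}): rotationally symmetric maps with these boundary circles and area $\le E_0$ cannot meet the equatorial band $\{|p|=r_{\mathrm{max}},\ \psi\in[\tfrac\pi4,\tfrac{3\pi}{4}]\}$, and the number of intervals where $\psi\in(\tfrac\pi4,\tfrac{3\pi}{4})$ stays odd in $t$. That is what forces an area, and hence an energy, of at least $2\pi$ for the $w$-component, localised (via Lemma \ref{lemma:energy-centre}) in $\{|s|\le 8\}$. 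Also note that your asserted consequence --- a pointwise lower bound on $\int_{S^1}|u_\theta|\,d\theta$ --- would feed into the \emph{angular} energy, which is irrelevant for $\Len=\int\fint|u_s|\,d\theta\,ds$; $\Len$ sees only $|u_s|$.

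The actual source of the $\Len$ lower bound is the $v$-component of the warped product, not the $w$-component. Since $v(\pm X)=0$ and $v$ is even, $\Len\ge\int|v_s|\,ds\ge 2 v_{\max}$, and the lower bound on $v_{\max}$ comes from integrating the tension identity
$\tau_g(u)^v = \rho^{-2}\DD{ss}v - f'(v)\,e_g(w)$
over the collar, using (a) $\|\tau_g(u)\|_{L^2}\le 1$, (b) the $2\pi$ lower bound on $E(w|_{\{|s|\le 8\}})$, (c) $v\ge\bar v+1$ on $\{|s|\le 8\}$ so that $-f'(v)$ there is dominated by the monotone tail of $-f_0'$, and (d) the precisely calibrated decay of $f_0'$ in Assumption \ref{ass:f} to solve the resulting inequality $v_{\max}\ge -4\ell^{-1}f_0'(v_{\max})-4\ell^{-1/2}$ for $v_{\max}$ at the rate $\ell^{-\frac14(1+\delta)}$ (resp.\ $\log(\ell^{-1})$). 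You label this calibration ``the main obstacle'' and then leave it unresolved, but it is the whole content of Lemma \ref{lemma:leash_length} and cannot be deferred.

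Two further points. First, you do not need any dynamical argument that ``$\ell(t)$ eventually becomes small'': Lemma \ref{lemma:v-initial-bound} shows $\ell(t)\le 5\pi^2\bar v^{-2}$ on the \emph{entire} existence interval, purely from the energy bound and boundary conditions (large $\bar v$ forces $v$ to travel a long distance, which forces $X(\ell)$ large), and $\bar v$ is chosen after $\bar\ell$. Your alternative (``otherwise the flow would dissipate infinite energy or equilibrate at an equivariant harmonic map'') is both unnecessary and unjustified --- a nontrivial equivariant harmonic map is not a priori excluded. Second, for times with small tension the paper does not pass to a nearby genuinely harmonic map; the estimate from Lemma \ref{lemma:leash_length} is proved directly under $\|\tau_g(u)\|_{L^2}\le 1$, so the approximation step you sketch is not needed and would require additional justification.
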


\textbf{Acknowledgement:}
The first author was supported by Engineering and Physical Sciences Research Council [EP/L015811/1]. 

\section{Proof\ of Theorem \ref{thm:1}}
In this section we prove our main result for solutions of the flow \eqref{eq:flow}. We initially carry out this proof for smooth solutions and explain at the end of the section what minor modifications are needed to prove the result also for weak solutions.

We first recall that the evolution of the length of a simple closed geodesic under Teichm\"uller harmonic map flow is essentially determined by a weighted integral of the Hopf differential $\Phi(u,g)=\big( \abs{u_s}^2-\abs{u_\th}^2-2\i \langle u_s,u_\th\rangle\big)(ds+\i d\th)^2$ of $u$ over the corresponding collar neighbourhood $\Col(\si)$.
To be more precise, let $(u,g)$ be a solution of \eqref{eq:flow} defined on either a closed surface of genus at least 2 or on a cylinder
and let 
$\si_0$ be a simple closed curve in $M$ which is homotopically non-trivial.
If $M$ is a closed surface of genus at least $2$, then 
Lemma 2.3 of \cite{RT-neg}  asserts that the evolution of
$\ell(t)=L_{g(t)}(\si(t))$, $\si(t)$ the unique geodesic  homotopic to $\si_0$,  under \eqref{eq:flow} is so that
\beq  \label{est:ddl} \bigg|\frac{d}{dt}\log(\ell^{-1})-\frac{\eta^2}{16\pi^3}\cdot \ell \int_{-X(\ell)}^{X(\ell)}\int_{S^1} \left(\abs{u_s}^2-\abs{u_\theta}^2\right)\rho^{-2} \, d\th \, ds \bigg|
\leq C \ell \, \eta^2 E_0 , \eeq
$(s,\th)\in[-X(\ell),X(\ell)]\times S^1$ collar coordinates on $\Col(\si)\subset M$,  in which the metric is given by $g=\rho^2(ds^2+d\th^2)$, compare Lemma \ref{lemma:collar}.  Here $C$ depends only on the genus of the surface.

As remarked in Lemma 4.4 of \cite{R-cyl} and its proof,  the same estimate applies also if $M$ is a cylinder, 
now for a constant $C$ that only depends on the fixed parameter $d>0$ from the definition \eqref{def:Gell}  of the metric component of the flow on cylinders, compare Appendix \ref{appendix:cyl}.
For cylinders, the collar $\Col(\si)$ agrees with the whole cylinder and is isometric to 
 $\big([-X(\ell),X(\ell)]\times S^1,\rho^2(ds^2+d\th^2)\big)$ where $\rho$ is still given by \eqref{eq:collar-general} but now $X(\ell)=\tfrac{2\pi}{\ell}(\tfrac\pi2-\arctan(\tfrac\ell d))$.

To treat both situations at the same time, in the following we consider maps $u \colon \Col(\ell)\to (N,g_N)$ that are defined on hyperbolic cylinders of the form
\beq\label{eq:collar-general}
\big(\Col(\ell),g\big) \cong \big([-X(\ell),X(\ell)]\times S^1, \rho^2(ds^2+d\th^2)\big), \quad \rho(s)=\frac{\ell}{2\pi}\cos(\frac{\ell}{2\pi}s)^{-1}
\eeq
where we ask that $X(\ell)>0$ is so that for some $\bar c_{1,2}>0$
\beq \label{def:constants-collar}
\frac{2\pi}{\ell}\big(\frac\pi2- \bar c_1\ell\big) \geq X(\ell) \geq \frac{2\pi}{\ell}\big(\frac\pi2-\bar c_2\ell\big) \quad \text{if} \quad \ell\in (0, \arsinh(1)).
\eeq We note that for collars in closed hyperbolic surfaces 
\eqref{def:constants-collar} holds true for universal constants
$\bar c_{1,2}>0$, while for cylinders the constants $\bar c_{1,2}$ depend only on the fixed constant $d>0$ used in the definition of the metrics \eqref{def:Gell}, compare Appendix \ref{appendix:cyl} and \cite{R-boundary}.

As a first step towards the proof of Theorem \ref{thm:1}, we show that for any map from such a hyperbolic cylinder, the weighted integral of the Hopf differential appearing in \eqref{est:ddl} is bounded from below by the following estimate.

\begin{lemma}\label{lemma:I_lower_general}
Let $(N,g_N)$ be a complete Riemannian manifold with sectional curvature bounded from above by some $\bar \kappa\in\R$. Then for any smooth map $u \colon \Col=\Col(\ell)\to (N,g_N)$ from a hyperbolic cylinder $(\Col,g)$ as in \eqref{eq:collar-general} and \eqref{def:constants-collar} with $\ell\in (0,\arsinh(1))$, and any \mbox{$J \subset [-X(\ell),X(\ell)]$}, we have
\beq \label{est:I_lower_general}
\ell\int_{J \times S^1} \left(\abs{u_s}^2-\abs{u_\theta}^2\right)\rho^{-2} \, d\th \, ds \geq -C \cdot (1+\norm{\tau_g(u)}_{L^2(\Col,g)}^2) \cdot (1+\log(\ell^{-1}))
\eeq
where $C$ depends only on  $\bar\ka$, an upper bound $E_0$ on the energy of $u$ and $\bar c_{1,2}>0$ from \eqref{def:constants-collar}.
\end{lemma}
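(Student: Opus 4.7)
The plan is to exploit the decomposition of $u$ into its $\theta$-angular mean and oscillation, and then apply integration by parts combined with the harmonic map equation with tension correction.

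First I would split $u(s,\theta) = \bar u(s) + v(s,\theta)$, where $\bar u(s) := \fint_{S^1} u(s,\theta)\, d\theta$ is taken in an ambient Euclidean space $\R^K$ after Nash-embedding $N$, and $v$ has zero angular mean. Since $v_\theta = u_\theta$ and the cross terms vanish upon integrating over $\theta$ (because $\int_{S^1} v_s\, d\theta = \partial_s\int_{S^1} v\, d\theta = 0$), one obtains
\[
\int_{S^1}(\abs{u_s}^2 - \abs{u_\theta}^2)\, d\theta = 2\pi\abs{\bar u'(s)}^2 + \int_{S^1}(\abs{v_s}^2 - \abs{v_\theta}^2)\, d\theta.
\]
The first term contributes non-negatively to the integral $\ell \int_J(\cdots)\rho^{-2}\, d\theta\, ds$, so, combined with $\abs{v_s}^2 - \abs{v_\theta}^2 \geq -\abs{v_\theta}^2$, it suffices to establish the upper bound
\[
\ell\int_J\int_{S^1}\abs{v_\theta}^2\rho^{-2}\, d\theta\, ds \leq C\bigl(1+\norm{\tau_g(u)}_{L^2(\Col,g)}^2\bigr)\bigl(1+\log \ell^{-1}\bigr).
\]

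To get this upper bound, I would write $\int_{S^1}\abs{v_\theta}^2\, d\theta = -\int_{S^1}\langle v, u_{\theta\theta}\rangle\, d\theta$ and substitute the harmonic map equation in the ambient space, $u_{ss} + u_{\theta\theta} = \rho^2 \tau_g(u) - A_u(du,du)$, with $A_u$ the second fundamental form of $N\hookrightarrow\R^K$. Multiplying by $\rho^{-2}$ and integrating yields three contributions. The tangent part $\int\langle v, \tau_g(u)\rangle\, d\theta\, ds$ is estimated via Cauchy--Schwarz by $\norm{\tau_g(u)}_{L^2(\Col,g)}\norm{v}_{L^2(d\theta\, ds)}$. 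The normal part $\int\langle v, A_u(du,du)\rangle\rho^{-2}$ is bounded using $\abs{A_u} \leq C(\bar\kappa)$ (which follows from the curvature bound via the Nash embedding), together with the Sobolev--Poincar\'e inequality $\norm{v(s,\cdot)}_{L^\infty(S^1)}^2 \leq C \int_{S^1}\abs{v_\theta(s,\cdot)}^2\, d\theta$ available because $v$ has zero angular mean. The remaining $\langle v, u_{ss}\rangle\rho^{-2}$ contribution is handled by integrating by parts in $s$, with boundary terms controlled through \eqref{def:constants-collar}. The Sobolev--Poincar\'e bound also allows absorption of the $\norm{v}_{L^\infty}$ factors back into the left-hand side.

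The main obstacle is extracting the sharp $\log\ell^{-1}$ dependence, rather than a polynomial in $\ell^{-1}$, given that $\int_{-X(\ell)}^{X(\ell)}\rho^{-2}(s)\, ds$ is of order $\ell^{-3}$. I expect the logarithm to arise from a dyadic decomposition of $[-X(\ell), X(\ell)]$ according to the size of $\cos(\ell s/(2\pi))$: there are $O(\log\ell^{-1})$ scales between $\cos\sim\ell$ near the boundary and $\cos\sim 1$ at the center, and on each dyadic piece a local energy estimate, or an $\varepsilon$-regularity bound for maps with controlled tension, gives a contribution of unit order, so that summing over scales yields the required logarithmic factor. The final estimate then follows by a standard absorption argument after collecting these contributions.
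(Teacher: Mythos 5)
There is a genuine gap, and it occurs precisely at the step you flag as the ``main obstacle.'' Your reduction replaces $\psi(s) = \int_{S^1}(\abs{u_s}^2-\abs{u_\theta}^2)\,d\theta$ by $-\vartheta(s) = -\int_{S^1}\abs{u_\theta}^2\,d\theta$ everywhere on $J$, so the lemma would follow from $\ell\int_J \vartheta(s)\rho^{-2}(s)\,ds \leq C(1+\norm{\tau}^2)(1+\log\ell^{-1})$. But this reduced claim is false in general: if the angular energy concentrates near the centre of the collar (say $\vartheta \sim 1$ on $|s|\leq 1$, which is compatible with $E(u)\leq E_0$ and bounded tension, e.g.\ near a bubble), then $\ell\int_{|s|\leq 1}\vartheta\rho^{-2}\sim \ell\cdot\rho(0)^{-2}\sim \ell^{-1}$, far worse than logarithmic. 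The paper's proof uses $\psi\geq -\vartheta$ \emph{only} on the low-energy set $\BM$ (where the angular-energy decay estimate \eqref{est:theta-lower} of Lemma~\ref{lemma:angular} forces $\vartheta\lesssim \rho^4 + \rho^2\norm{\tau}^2$, so the weighted integral is in fact $O(\ell)$, not merely $O(\log\ell^{-1})$). On the complement $\BM^c$, whose measure is what produces the $\log\ell^{-1}$, one cannot bound $\vartheta$ and the argument must retain $\psi$. The crucial structure used there is that $\psi$ is the real part of the Hopf differential, so $|\partial_s\psi|\lesssim \abs{\tau_\gE(u)}\,\abs{du}_\gE$ pointwise, giving $\int\rho^{-1}\abs{\partial_s\psi}\lesssim E_0^{1/2}\norm{\tau_g(u)}_{L^2}$; combined with an endpoint $s_j\in\BM$ on which $\psi(s_j)\geq -\vartheta(s_j)$ is well controlled, this yields the lower bound on $\ell\int_{\BM^c}\psi\rho^{-2}$. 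Your proposal never invokes the Hopf differential, which is the tool that survives in the high-energy regions.

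Your integration-by-parts plan for the reduced claim also has a specific problem: the normal term produces $\ell\int\langle v, A_u(du,du)\rangle\rho^{-2}$, and even with $\abs{A_u}\leq C(\bar\kappa)$ and $\norm{v(s,\cdot)}_{L^\infty(S^1)}\lesssim\vartheta(s)^{1/2}$, you are left with $\ell\int\vartheta^{1/2}\bigl(\abs{\bar u'}^2+\vartheta+\int_{S^1}\abs{v_s}^2\bigr)\rho^{-2}$; the $\abs{\bar u'}^2$ piece is not an angular quantity and cannot be absorbed, and if $\abs{\bar u'}\sim 1$ near the centre this again produces $\ell^{-1}$. Finally, the proposed dyadic decomposition by the size of $\cos(\ell s/2\pi)$ does not reflect where the $\log\ell^{-1}$ actually comes from. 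It does not come from summing $O(\log\ell^{-1})$ geometric scales each of unit size; it comes from a \emph{high-/low-energy} decomposition, in which the low-energy set contributes $O(\ell)$ and the high-energy set has measure $O(\log\ell^{-1})$ (a consequence of the bound $\#\{\text{components}\}\leq E_0/\eps_0+2$ and the definition of $\BM$ via $\dist(\cdot,\mathcal A)\geq 4\log\rho^{-1}+2$). To complete the proof along any route you will need both the angular-energy decay estimate (Lemma~\ref{lemma:angular}) and the Cauchy--Riemann structure of the Hopf differential; the latter is entirely absent from your plan.
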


Combining Lemma \ref{lemma:I_lower_general} with \eqref{est:ddl} hence tells us that for solutions of \eqref{eq:flow} we may always bound 
\beq \label{est:ddl_weak_lower} 
\frac{d}{dt}\log(\ell^{-1})\geq -C\cdot (1+\norm{\tau_g(u)}_{L^2(\Col,g)}^2)\cdot [1+\log(\ell^{-1})]-C\ell.
\eeq
We shall apply this very weak lower bound at those times $t$ where we do not have good control on the tension and hence, in the setting of Theorem \ref{thm:1}, do not impose a lower bound on  $\Len(u,\Col)$. 
Conversely, for maps $u$ for which the assumption \eqref{ass:tens_gives_length} of Theorem \ref{thm:1} is satisfied, we obtain a far stronger bound.

\begin{lemma} \label{lemma:I_lower_strong}
Suppose that in the setting of Lemma \ref{lemma:I_lower_general} we have additionally that 
\beq \label{ass:L_lower_lemma}
\Len(u,\Col(\ell)):=\int_{-X(\ell)}^{X(\ell)} \fint_{S^1} \abs{u_s} \, d\th \, ds \geq c_0 \, \ell^{-\frac{1}{4}(1+\de)}, 
\eeq
for some $c_0>0$ and some $\de\in(0,1)$.
Then
\beq 
\label{claim:lemma22}
\ell \int_\mathcal{C} \left( |u_s|^2 - |u_\theta|^2 \right) \rho^{-2} \, d\th \, ds\geq c_1 \ell^{-\delta} - C\left( 1 + \| \tau_g(u) \|_{L^2(\Col,g)}^2 \right) \left( 1 + \log ( \ell^{-1} ) \right) \eeq
for some $c_1>0$ and $C\in\R$ that depend only on $c_0$, $\de$ and as always $\bar\ka$, $E_0$  and $\bar c_{1,2}>0$. 
\end{lemma}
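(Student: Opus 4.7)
The plan is to split the integrand via the identity
$$\int_\Col \rho^{-2}\big(|u_s|^2 - |u_\theta|^2\big)\,d\theta\,ds \;=\; 2\int_\Col \rho^{-2}|u_s|^2\,d\theta\,ds \;-\; \int_\Col |du|_g^2\,d\theta\,ds,$$
and to treat the two resulting terms separately. The second (negative) term is an error which, by running the same argument that underlies Lemma \ref{lemma:I_lower_general}, is bounded by a multiple of $(1+T^2)(1+\log(\ell^{-1}))/\ell$ with $T:=\|\tau_g(u)\|_{L^2(\Col,g)}$, and so contributes precisely the claimed error term $-C(1+T^2)(1+\log(\ell^{-1}))$ after multiplication by $\ell$.

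For the positive first term, I would first apply Cauchy--Schwarz in the angular direction, $\int_{S^1}|u_s|^2\,d\theta \geq 2\pi(\fint_{S^1}|u_s|\,d\theta)^2 =: 2\pi f(s)^2$, and then in $s$ with weight $\rho^{-2}$, using that $\int_{-X(\ell)}^{X(\ell)}\rho^2\,ds$ is uniformly bounded (being, up to a factor $2\pi$, the area of a standard collar). A direct application alone gives only $\int_\Col \rho^{-2}|u_s|^2 \geq c\,\Len^2 \geq c\,\ell^{-(1+\delta)/2}$, which is insufficient: after multiplication by $\ell$ this is $c\,\ell^{(1-\delta)/2}$, much smaller than the required $c_1\,\ell^{-\delta}$. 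To close this gap I would exploit the near-holomorphicity of the Hopf differential $\phi := (|u_s|^2-|u_\theta|^2)-2i\langle u_s,u_\theta\rangle$: from $\partial_{\bar z}\phi = 2\rho^2\langle\tau_g(u),u_z\rangle$, Cauchy--Schwarz in both variables yields
$$\int_{-X(\ell)}^{X(\ell)}\Big|\partial_s\int_{S^1}\phi\,d\theta\Big|\,ds \;\leq\; C\,T\,\sqrt{E_0},$$
so that the $\theta$-integrated zero Fourier mode $A(s):=\int_{S^1}(|u_s|^2-|u_\theta|^2)\,d\theta$ is nearly constant in $s$, with total variation of order $T$. Combining this near-constancy with a pointwise lower bound $A(s_\star) \geq 2\pi f(s_\star)^2 - \int_{S^1}|u_\theta|^2(s_\star,\cdot)\,d\theta$ at a well-chosen point $s_\star$ where $f(s_\star)^2 \gtrsim (\Len/(2X(\ell)))^2 \sim \Len^2\ell^2$, one would propagate the estimate to all $s$ (up to oscillation $CT$) and, integrating against $\rho^{-2}\,ds$ (of total mass $\sim \ell^{-3}$), obtain $\ell\int_\Col\rho^{-2}|u_s|^2 \gtrsim c\,\Len^2 \geq c\,\ell^{-(1+\delta)/2}$, which exceeds $c_1\ell^{-\delta}$ for $\delta\in(0,1)$.

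The hard part will be the pointwise control on $\int_{S^1}|u_\theta|^2(s_\star,\cdot)\,d\theta$, since the only a priori information is the $L^1_s$-integrated bound $\int_\Col|u_\theta|^2\,d\theta\,ds \leq 2E_0$, and a naive Chebyshev--type argument yields only $\sim E_0/X(\ell) \sim E_0\ell$ at a typical point, which is of larger order than the desired $\Len^2\ell^2\sim\ell^{(3-\delta)/2}$ once one recalls that $\delta<1$. Closing this requires either a carefully coordinated selection of $s_\star$ (e.g.~a Lebesgue point at which $f^2$ is of the right order while $\int_{S^1}|u_\theta|^2\,d\theta$ stays near its $s$-average), or an integrated replacement of the pointwise argument in which the residual $|u_\theta|^2$-contribution is absorbed into the Lemma \ref{lemma:I_lower_general}-type error $C(1+T^2)(1+\log(\ell^{-1}))$, exploiting the strong concentration of $\rho^{-2}$ in the central region of the collar.
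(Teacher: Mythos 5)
Your opening decomposition is where the argument breaks down, and it breaks in a way that is not repairable within the stated strategy. You write $|u_s|^2-|u_\theta|^2 = 2|u_s|^2 - (|u_s|^2+|u_\theta|^2)$ and claim the negative term, namely $\ell\int_\Col\rho^{-2}(|u_s|^2+|u_\theta|^2)\,d\theta\,ds$, is bounded above by $C(1+T^2)(1+\log\ell^{-1})$ "by running the same argument that underlies Lemma \ref{lemma:I_lower_general}." But that argument gives no control whatsoever on the energy \emph{sum}. The only tools available are (a) the exponential decay of the \emph{angular} energy $\vartheta$ on the low-energy set $\BM$, which says nothing about $|u_s|^2$, and (b) the near-holomorphicity of the Hopf differential, i.e.\ the bound $|\partial_s\Rea(\phi)-\partial_\theta\Ima(\phi)|\leq 2|\tau_\gE(u)||du|_\gE$, which controls the $\theta$-zero-mode of the \emph{difference} $|u_s|^2-|u_\theta|^2$, not the sum. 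Indeed the claimed upper bound is simply false: for the map $u(s,\theta)=a\cdot s$ with $a\sim\sqrt{\ell}$ (chosen so that $E(u,g)=E_0$ and $\tau_g(u)=0$), a direct computation gives $\ell\int_\Col\rho^{-2}|u_s|^2\,d\theta\,ds \sim\ell\cdot a^2\ell^{-3}\sim \ell^{-1}$, far exceeding $\log(\ell^{-1})$. More tellingly, if your bound held, then $\ell\int_\Col\rho^{-2}(|u_s|^2-|u_\theta|^2) \leq \ell\int_\Col\rho^{-2}(|u_s|^2+|u_\theta|^2)\leq C(1+T^2)(1+\log\ell^{-1})$, directly contradicting the conclusion \eqref{claim:lemma22} you are trying to prove (which for small $T$ and small $\ell$ forces the left-hand side to be at least $\tfrac12 c_1\ell^{-\de}$).

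Your second observation — that the naive weighted Cauchy--Schwarz on the whole collar only gives $\ell\int_\Col\rho^{-2}|u_s|^2\gtrsim \ell\Len^2$, which is too weak by a factor $\sim\ell^{1-\de}$ — is correct and is exactly the right concern. The fix you propose (propagating a pointwise lower bound on $A(s)$ from a good point $s_\star$ via the near-holomorphic Hopf differential) is a natural idea and is in fact of the same flavour as the mechanism in Lemma \ref{lemma:I_lower_general}, but you explicitly flag that you cannot close the choice of $s_\star$, and the error terms in the propagation (of order $T\ell^{-2}$ before the final multiplication by $\ell$) are in general too large.

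The idea you are missing is much simpler and entirely avoids pointwise considerations: do not apply Cauchy--Schwarz on the whole collar, but restrict to the $\eps$-thin part with a carefully tuned $\eps$. Concretely, set $\eps(\ell)=\La\ell^{\frac12(1+\de)}$ and $K=\{s\in\BM:\inj_g(s,\cdot)<\eps(\ell)\}$. Two facts then conspire: the hyperbolic area of the $\eps$-thin part is only $O(\eps)$, so $\int_{K\times S^1}\rho^2\leq C\ell^{\frac12(1+\de)}$; and the Lebesgue measure of $K^c$ in the $s$-variable is $O(\log\ell^{-1}+\ell^{-\frac12(1+\de)})$, small enough that (using the $L^2$ bound on $|u_s|$ via the energy) the contribution of $K^c$ to $\Len$ is at most $\tfrac{c_0}{2}\ell^{-\frac14(1+\de)}$ for $\La$ large and $\ell$ small. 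Hence $\int_{K\times S^1}|u_s|\geq \pi c_0\ell^{-\frac14(1+\de)}$, and Cauchy--Schwarz restricted to $K$ now gives $\ell\int_{K\times S^1}\rho^{-2}|u_s|^2\geq\ell\big(\int_{K\times S^1}\rho^2\big)^{-1}\big(\int_{K\times S^1}|u_s|\big)^2\geq c_1\ell^{-\de}$, which is precisely the leading term. The angular energy on $K\subset\BM$ is then subtracted off using \eqref{est:lower-theta-BM} (contributing only $O(\ell(1+T^2))$), and the integral over $K^c$ is bounded below by Lemma \ref{lemma:I_lower_general}, contributing the stated error $-C(1+T^2)(1+\log\ell^{-1})$. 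The tuning of $\eps(\ell)$ so that both the area bound and the measure-of-$K^c$ bound match the assumed rate $\ell^{-\frac14(1+\de)}$ is the real substance of the lemma, and is what your proposal does not supply.
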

We can now combine Lemma \ref{lemma:I_lower_strong} with \eqref{est:ddl} to conclude that for  solutions $(u,g)$ of \eqref{eq:flow} and times $t$ with $\norm{\tau_{g(t)}(u(t))}_{L^2(M,g(t))}
\leq 1$ we have 
\beq \label{est:ddl_weak_stronger1} 
\frac{d}{dt}\log(\ell^{-1})\geq c_1 \, \ell^{-\de} -C\cdot [1+\log(\ell^{-1})]-C\ell. 
\eeq
for collars $\Col(\si(t))$ around geodesics of length $\ell(t)\in (0,\arsinh(1))$ on which 
 \eqref{ass:L_lower_lemma} holds. 
Given any numbers $c_0>0$ and $\de\in(0,1)$,
we can and will choose $\bar \ell\in(0,1)$ sufficiently small so that for $\ell\in(0,\bar \ell)$ the first term in the above expression dominates, say so that 
$\frac{c_1}2 \ell^{-\delta} \geq C [1 + \log(\ell^{-1})]+C\ell $. Hence \eqref{est:ddl_weak_stronger1} implies that 
if $(u,g)$ is a solution of \eqref{eq:flow}, then 
 \beq \label{est:ddl_weak_stronger2} 
\frac{d}{dt}\log(\ell^{-1})\geq \frac{c_1}{2} \, \ell^{-\de}  \text{ whenever } 
\eqref{ass:L_lower_lemma} \text{ holds, } 
\ell\in (0,\bar \ell) \text{ and } \norm{\tau_g(u)}_{L^2(M,g)}\leq 1.
\eeq

We shall give the proof of these two lemmas later and first 
explain how they can be used to prove Theorem \ref{thm:1}.

\begin{proof}[Proof of Theorem \ref{thm:1}]
Let $(u,g)$ and $\si(t)$ be as in Theorem \ref{thm:1} and let $\ell(t)=L_{g(t)}(\si(t))$.
We let 
$ 
D := \left\{t\in[T_1,T) : \norm{\tau_g(u)(t)}_{L^2(M,g(t))}\leq \eps_1\right\} 
$
be the set of times where the second assumption of the theorem 
ensures that \eqref{ass:L_lower_lemma} holds on $\Col(\si(t))$. As we may 
assume without loss of generality that $\eps_1\leq 1$ and that $\bar \ell\in (0,1)$ is chosen as above we may 
 hence bound $\frac{d}{dt}\log(\ell^{-1})$ from below according to \eqref{est:ddl_weak_stronger2} on $D$.
Conversely, for times $t\notin D$ we bound $\frac{d}{dt}\log(\ell^{-1})$  using \eqref{est:ddl_weak_lower}. Combined we obtain that 
 $f(t) := 1 + \log(\ell(t)^{-1}) $ satisfies a differential inequality of the form
\beq \label{est:ddl_proof}
f'(t) \geq \frac{c_1}2 \ell(t)^{-\delta} \, \mathbb{1}_{D}-g(t) f(t) = c_2 \, e^{\delta f(t)} \, \mathbb{1}_{D}-g(t) f(t) \text{ on } [T_1,T) .
\eeq
Here $g = C \cdot \mathbb{1}_{D^c} \cdot (1+\norm{\tau_g(u)}_{L^2(\Col,g)})$, which we note is integrable over $[T_1,T)$, even if $T=\infty$, since the energy inequality \eqref{ass:energy-ineq} assures that 
$\mathcal{L}^1(D^c) \leq  CE_0 \,\eps_1^{-2}<\infty$.

Hence $f(t)\leq F(t) := \exp\big( \int_{T_1}^t g(t') \, dt' \big) \cdot f(t)  \leq e^{\norm{g}_{L^1}} f(t) $ on  $ [T_1,T)$,  which allows us to conclude from \eqref{est:ddl_proof} that
\beqs 
F'(t) \geq c_2 \, e^{\de f(t)} \, \mathbb{1}_{D} \geq c_2 \, e^{\tilde{\de} F(t)} \, \mathbb{1}_{D}, 
\eeqs
where $\tilde{\de} = \de e^{-\norm{g}_{L^1}} > 0 $.
Integration over $[T_1,t)$ then yields 
\beqs 0 < e^{-\tilde{\de}F(t)} \leq e^{-\tilde{\de}F(T_1)} - c_2 \tilde \de\cdot \mathcal{L}^1(D \cap [T_1,t) ) \leq 1 -c_2 \tilde \de\cdot (t-T_1-CE_0 \, \eps_1^{-2})\eeqs
which leads to a contradiction for $t$ sufficiently large. Hence we cannot have that $T=\infty$, so the solution has to degenerate in finite time.
\end{proof}
To prove the above Lemmas \ref{lemma:I_lower_general} and \ref{lemma:I_lower_strong} we shall use the following standard estimates on the angular energy and on the $H^2$-norm of maps from Euclidean cylinders on 
regions with low energy, compare e.g.~\cite{DT, HRT, Struwe85}. Here and in the following we write for short $\Cyl_\La(s):=[s-\La,s+\La]\times S^1$ and $\Cyl_\La=\Cyl_\La(0)$ and will equip these cylinders with the Euclidean metric $\gE=ds^2+d\th^2$ unless specified otherwise.

\begin{lemma}[compare e.g.~\cite{DT, HRT, Struwe85}] 
\label{lemma:angular}
Let $(N,g_N)$ be a complete Riemannian manifold with sectional curvature bounded from above by some $\bar \kappa\in\R$. 
Then there exist numbers $\eps_0=\eps_0(\bar\kappa)>0$ and $C=C(\bar\kappa)\in\R$ so that the following holds true for $H^2$ maps $u \colon \Cyl_X\to (N,g_N)$ away from 
\beqs 
 \mathcal{A} := \{ s: \abs{s}\leq X-1 \text{ and } E(u;\Cyl_1(s)) \geq \varepsilon_0 \} \cup [-X,-X+1]\cup [X-1,X].\eeqs
Let $\varphi_0\in C_c^\infty((-1,1),\R^+)$ be a fixed cut-off function with $\varphi_0\equiv 1$ on $[-\half,\half]$ and $\abs{\varphi_0'}+\abs{\varphi_0''}\leq C$. Then for any $s_0\notin \A$ we have that 
\beq\label{est:H2}
\int \varphi_0(\cdot-s_0)^2 \, \abs{\na^2 u}^2 \, d\th \, ds \leq C E(u;\Cyl_1(s_0))+C\int\varphi_0(\cdot-s_0)^2 \, \abs{\tau_\gE(u)}^2 \, d\th \, ds
\eeq
where $\na^2 u$ is the (intrinsic) Hessian of the map $u \colon (\Cyl,\gE)\to (N,g_N)$, i.e.~$\na^2 u=\na du$ is computed using the connection on $T^*\Cyl\otimes u^*TN$ induced by the Levi-Civita connections.
\newline
Furthermore, the angular energy 
$ \vartheta(s) = \int_{\{s\} \times S^1} |u_\theta|^2 \, d\theta $
satisfies
\beq \label{est:angular} \vartheta(s) \leq C E_0 \exp(-\dist_\gE (s,\mathcal{A})) + C \int_{\Cyl_X} |\tau_\gE(u)|^2 \, e^{-|s-q|} \, dq \, d\theta , \eeq
for every $s$ with $\dist_\gE (s,\mathcal{A})\geq 1$, 
$E_0$ an upper bound on the energy of $u$. 
\end{lemma}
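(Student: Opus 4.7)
The plan is to treat both statements via the standard $\eps$-regularity scheme for almost-harmonic maps on a flat two-cylinder. Embedding $(N,g_N)\hookrightarrow\R^n$ by Nash, the tension equation becomes
\beqs
\De_\gE u=\tau_\gE(u)-A(u)(du,du) \quad \text{pointwise in }\R^n,
\eeqs
with $|A(u)(du,du)|\le C|du|^2$; the set $\A$ is designed precisely to isolate those $s_0$ on which $\Cyl_1(s_0)$ carries less than the regularity threshold $\eps_0$ of energy.

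For the $H^2$-bound \eqref{est:H2} I would test the above equation with $\varphi_0(\cdot-s_0)^2\De_\gE u$, integrate by parts twice on $\Cyl_1(s_0)$, and combine the result with the two-dimensional Ladyzhenskaya/Gagliardo--Nirenberg estimate
\beqs
\int\psi^2|du|^4\,d\th\,ds\le C\int(\psi|du|)^2\,d\th\,ds\cdot\int\bigl(\psi^2|\na_\gE^2 u|^2+|\na\psi|^2|du|^2\bigr)d\th\,ds
\eeqs
applied with $\psi=\varphi_0(\cdot-s_0)$. Since $\int(\psi|du|)^2\le 2E(u;\Cyl_1(s_0))<2\eps_0$ on the complement of $\A$, the resulting quartic term is absorbed on the left, yielding the Euclidean-Hessian version of \eqref{est:H2}; the intrinsic Hessian $\na du$ differs from $\na_\gE^2 u$ only by the pointwise term $\Ga(u)(du,du)$ with $|\Ga(u)(du,du)|\le C|du|^2$, which is handled by one further application of the same absorption trick.

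For the angular-energy estimate \eqref{est:angular} I would set $\vth(s)=\int_{S^1}|u_\th|^2\,d\th$ and differentiate twice in $s$, using $\na_s u_\th=\na_\th u_s$, the Euclidean equation $\De_\gE u=\tau_\gE(u)-A(u)(du,du)$, the commutator $[\na_s,\na_\th]=R_N(u_s,u_\th)$ to turn one pair of derivatives around, and Wirtinger on $S^1$ in the form $\int|u_{\th\th}|^2\ge\int|u_\th|^2$ (after subtracting the $\th$-mean of $u$, which is harmless). The sectional curvature bound $\sec_N\le\bar\ka$ causes the curvature contribution $-\langle R_N(u_s,u_\th)u_\th,u_s\rangle\ge-\bar\ka|u_s\wedge u_\th|^2$ to enter with the correct sign, and Step~1 together with the two-dimensional Sobolev embedding $H^2\embed L^\infty$ controls $\|du\|_{L^\infty(\Cyl_{1/2}(s))}^2$ by $C(\eps_0+\|\tau_\gE(u)\|_{L^2(\Cyl_1(s))}^2)$. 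After shrinking $\eps_0$ to absorb the small curvature error one arrives at the clean one-dimensional inequality
\beqs
\vth''(s)\ge\vth(s)-h(s),\qquad h(s)=C\int_{\{s\}\times S^1}|\tau_\gE(u)|^2\,d\th,
\eeqs
valid on $\{\dist_\gE(\cdot,\A)\ge 1\}$. Estimate \eqref{est:angular} then follows by comparison with the Green's function $\tfrac12 e^{-|s|}$ of $-\partial_s^2+1$ on $\R$, using $\vth\le CE_0$ at the boundary of that set, which is itself a Sobolev consequence of Step~1 applied to the adjacent unit cylinder.

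The delicate point will be turning the raw identity for $\vth''$ into the clean ODE $\vth''\ge\vth-h$: one has to reconcile the intrinsic and Euclidean Hessians (supplied by Step~1), absorb tension cross-terms into $h$ by Cauchy--Schwarz, and verify that the curvature contribution comes in with the correct sign thanks to $\sec_N\le\bar\ka$ rather than a lower curvature bound. Once this inequality is in hand, the exponential-decay conclusion is a routine Green's-function comparison.
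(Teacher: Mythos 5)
The paper does not itself prove this lemma (it cites \cite{DT, HRT, Struwe85} and moves on), so there is no internal proof to match against; the relevant internal comparison is the proof of Lemma \ref{lemma:second-der}, which uses the same circle of ideas. Your outline of the $H^2$ estimate \eqref{est:H2} — square the equation $\Delta_\gE u = \tau_\gE(u)-A(u)(du,du)$ against $\varphi_0^2$, pass from $|\Delta_\gE u|^2$ to $|\nabla^2_\gE u|^2$ by integrating by parts, and absorb the quartic term via a Ladyzhenskaya/GNS inequality using $E(u;\Cyl_1(s_0))<\eps_0$ — is correct and is exactly the standard route, as is the final passage from the Euclidean Hessian to the intrinsic one via a second absorption of the $|du|^2$ error.

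There is, however, a genuine gap in your treatment of the curvature error in the angular estimate. You claim that Step~1 together with $H^2\hookrightarrow L^\infty$ gives $\|du\|_{L^\infty(\Cyl_{1/2}(s))}^2\le C(\eps_0+\|\tau_\gE(u)\|_{L^2(\Cyl_1(s))}^2)$, but in two dimensions $H^2\hookrightarrow L^\infty$ controls $u$, not $du$: $du\in H^1$, and $H^1$ does \emph{not} embed into $L^\infty$ on a $2$-disc. Consequently, the step where you absorb $-\bar\kappa\int_{S^1}|u_s\wedge u_\theta|^2\,d\theta$ into the good terms of the pointwise differential inequality $\vartheta''\ge\vartheta-h$ is not justified as written, and this is precisely the delicate point you flag at the end without resolving. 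The way the paper's own proof of Lemma \ref{lemma:second-der} handles the analogous curvature term $R=2\bar\kappa\int\varphi^2|u_\theta|^2|u_s|^2$ is instructive: it uses the two-dimensional embedding $W^{1,1}_0\hookrightarrow L^2$ to bound $\|\varphi|u_s|^2\|_{L^2}\cdot\|\varphi|u_\theta|^2\|_{L^2}$ by first-derivative $L^1$ norms, splits the resulting product and absorbs the $\|\varphi\nabla^2 u\|_{L^2}$ contributions using the small-energy threshold and \eqref{est:H2}. To repair your argument you should replace the false $L^\infty$ bound on $du$ by an estimate of the curvature error on $2$-dimensional slabs in this $W^{1,1}\hookrightarrow L^2$ style (or, equivalently, by Ladyzhenskaya with the small energy factor), which gives a differential inequality for a slab average $\int_{s-1/2}^{s+1/2}\vartheta$ and then a pointwise bound on $\vartheta(s)$ via a one-dimensional trace inequality; alternatively derive the pointwise inequality directly but bound $\int_{S^1}|du|^4\,d\theta$ using a genuine one-dimensional Sobolev inequality on $S^1$ in terms of $\vartheta$, $\int_{S^1}|u_{\theta\theta}|^2$ and $\int_{S^1}|u_{s\theta}|^2$ and then absorb. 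Either way the Wirtinger and Green's-function-comparison steps you describe then go through.
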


We recall that the tension with respect to the hyperbolic metric $\rho^2(ds^2+d\th^2)$  is related to the Euclidean tension by $\tau_g(u)=\rho^{-2}\, \tau_\gE(u)$. We also note that $\sup_{\abs{q}\leq X(\ell)}
 e^{-\abs{s-q}/2} \rho(q)\leq C\rho(s)$ for a constant $C$ that depends only on an upper bound on $ \rho$ on the collar, and hence only on $\bar c_{1}$ from \eqref{def:constants-collar} and an upper bound on $\ell$. 
We hence immediately obtain from 
\eqref{est:angular} that for maps from hyperbolic collars as considered in 
Lemmas \ref{lemma:I_lower_general} and \ref{lemma:I_lower_strong}
\beq \label{est:theta-lower} 
\vartheta(s) \leq C \exp(-\dist_\gE (s,\mathcal{A})) + C \rho(s)^2 \int_\mathcal{C} |\tau_g(u)|^2 \, e^{-|s-q|/2} \, dv_g \text{ for } s\notin \mathcal{A}, \eeq
$\mathcal{A}$ as in the above Lemma \ref{lemma:angular}, compare also \cite{RT-neg}.

In the proofs of both Lemmas \ref{lemma:I_lower_general} and \ref{lemma:I_lower_strong} we will encounter weighted integrals of this angular energy and as such it will be useful to split our collar $\Col=\Col(\ell)$ into $\BM\times S^1$, for
\beq \label{def:BM}
 \BM := \left\{ s \in [-X(\ell),X(\ell)] : \dist_\gE(s,\mathcal{A}) \geq 4 \log(\rho(s)^{-1}) + 2 \right\}\eeq
 where \eqref{est:theta-lower} yields strong bounds on angular energies,  
  and $\BM^c\times S^1$, whose  measure is very small compared to $X(\ell)$ if $\ell$ is small, compare Lemma \ref{lemma:BM} below.  

We first note that given any map $u$ from a collar as in Lemma \ref{lemma:I_lower_general}
 or Lemma \ref{lemma:I_lower_strong} and any subset $K\subset \BM$ we can use \eqref{est:theta-lower} to estimate
\beqa \label{est:lower-theta-BM}
\ell \int_{K  \times S^1}\abs{u_\th}^2  \rho^{-2} \, d\theta \, ds 
\leq C\ell \int_{\BM} \bigg(\rho^{2}(s)
+ \int_{\Col}   |\tau_g(u)|^2 \, e^{-|s-q|/2} \, dv_g \bigg) \, ds \leq C\ell (1+\norm{\tau_g(u)}_{L^2(\Col,g)}^2),
\eeqa
where we use in the last step that $\int \rho^2\leq \text{Area}(\Col,g)$ and that the area of the collars we consider is bounded above in terms of only $\bar c_1$.  
In the setting of Lemma \ref{lemma:I_lower_general} we will use this bound to simply estimate 
\beq \label{est:lower-weak-BM}
\ell \int_{(J \cap \BM) \times S^1}\big(\abs{u_s}^2-\abs{u_\th}^2 \big) \, \rho^{-2} \, d\theta \, ds \geq -C\ell (1+\norm{\tau_g(u)}_{L^2(\Col)}^2),
\eeq
while in the setting of Lemma \ref{lemma:I_lower_strong} we will combine the above bound on the angular energy with a much stronger lower bound on  $\int_{\BM \times S^1}\abs{u_s}^2 \rho^{-2}$ that we will later derive from the main assumption \eqref{ass:L_lower_lemma} of that lemma. 

In both cases, we have to consider the high-energy regions $\BM^c$ separately 
from $\BM$ and it will be useful to note that $\BM^c$ has the following simple properties:

\begin{lemma}
\label{lemma:BM}
Let $u$ be a map from a hyperbolic collar $\Col$ as in Lemma \ref{lemma:I_lower_general} with energy bounded by $E_0$, and let $\BM$ be defined by \eqref{def:BM}. 
Then the number of connected components of $\BM^c$ is bounded above by $\frac{E_0}{\eps_0} + 2$, and the length of each connected component $I_j$ of $\BM^c$ is bounded by
\beq\label{est:I_j}
\abs{I_j}\leq C \cdot\big(\sup\nolimits_{I_j}\log(\rho^{-1}) + 1\big),  \eeq
for $C= 8\big(\frac{E_0}{\eps_0} + 2\big)$. Furthermore, for points contained in the same connected component we have that the conformal factor is of comparable size, namely
\beq \label{est:rho-comp-Ij}
\sup\nolimits_{I_j} \rho \leq C \inf\nolimits_{I_j} \rho \text{, and hence in fact }
\abs{I_j}\leq C \cdot (\inf\nolimits_{I_j}\log(\rho^{-1})+1)
\eeq
for a constant $C$ that depends only on $\eps_0$ and $E_0$, and  the constant $\bar c_{1}$ from \eqref{def:constants-collar}.
\end{lemma}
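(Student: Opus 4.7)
I will establish the three assertions in sequence, using the definition of $\BM$, the energy bound $E(u)\le E_0$, and, for the third, the explicit formula for $\rho$.

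For the count of components, the key observation is that $\BM^c\subset\{\rho<e^{-1/2}\}$, since elsewhere the defining inequality for $\BM$ is vacuous. In particular, any $\BM$-gap between two components of $\BM^c$ lies in this thin region, and every point $t$ in such a gap satisfies $\dist_\gE(t,\A)\ge 4\log(\rho(t)^{-1})+2>4$, so the nearest $\A$-points to the left and right of $t$ are each more than $4$ away. For each component $I_j$ of $\BM^c$ that does not contain one of the boundary intervals $[-X,-X+1]$ or $[X-1,X]$, I pick $s_j\in I_j\cap\A$ with $E(u;\Cyl_1(s_j))\ge\eps_0$; the above yields $|s_{j+1}-s_j|>8$ between adjacent such components, so the unit cylinders $\Cyl_1(s_j)$ are pairwise disjoint. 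Summing their energies bounds the number of these components by $E_0/\eps_0$, with the at most two remaining components contributing the $+2$.

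For the length bound on a single $I_j$, set $r_j:=4\sup_{I_j}\log(\rho^{-1})+2$. Every $s\in I_j$ satisfies $\dist_\gE(s,\A)<r_j$, so with $\alpha_L:=\min(I_j\cap\A)$ and $\alpha_R:=\max(I_j\cap\A)$ one has $|I_j|\le(\alpha_R-\alpha_L)+2r_j$. The span $\alpha_R-\alpha_L$ is the sum of $|\A\cap I_j|$ and the gaps in $\BM^c\setminus\A$ between consecutive $\A$-pieces. The total $\A$-measure is controlled by the Fubini identity $\int_{-X}^X E(u;\Cyl_1(s))\,ds=2E(u)\le 2E_0$, giving $|\A|\le 2E_0/\eps_0+2$. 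Each internal gap has length at most $2r_j$, since its midpoint is in $\BM^c$ and hence within $r_j$ of $\A$; the number of such gaps is bounded by $O(E_0/\eps_0)$ via a Vitali-type covering applied to the unit cylinders centred on components of the energy part of $\A$ inside $I_j$. Collecting constants yields the claimed bound $|I_j|\le 8(E_0/\eps_0+2)(\sup_{I_j}\log(\rho^{-1})+1)$.

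For the conformal comparability $\sup_{I_j}\rho\le C\inf_{I_j}\rho$, I work from $\rho(s)=(\ell/2\pi)\sec(\ell s/2\pi)$ and split into cases. If $I_j$ lies entirely in the thick part of the collar, $\rho$ is a priori bounded above and below by constants depending only on $\bar c_1$, and the comparability is immediate. If $I_j$ reaches into the thin part, the lower bound $\rho\ge\ell/(2\pi)$ together with the length bound gives $|I_j|\ell=O(\ell\log\ell^{-1})\to 0$ as $\ell\to 0$, so the angle $\phi=\ell s/(2\pi)$ varies by an amount tending to $0$ on $I_j$; a careful application of the cosine addition formula then yields a uniform upper bound on the ratio once $\bar\ell$ is chosen sufficiently small, with $C$ depending only on $E_0,\eps_0,\bar c_1$.

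The principal difficulty I anticipate is the conformal comparability: a naive integration of $|(\log\rho)'|\le\rho$ only yields a circular self-improving inequality on $R:=\sup_{I_j}\rho/\inf_{I_j}\rho$ whose fixed point is not obviously bounded when the constant from the length estimate is large. It is necessary to exploit the smallness of $|I_j|\ell$ directly as $\ell\to 0$, together with a separate treatment of the thick part of the collar, to close the estimate with a uniform constant.
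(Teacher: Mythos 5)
Your proposal for the component count and length bound is in the right spirit, but the heart of the lemma — the conformal comparability \eqref{est:rho-comp-Ij} — has a genuine gap, and your own closing remark correctly identifies the difficulty without resolving it.

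\textbf{The conformal comparability argument does not close.} You argue via the explicit formula $\rho(s)=\tfrac{\ell}{2\pi}\sec(\tfrac{\ell s}{2\pi})$ that, since $\Delta\phi := |I_j|\,\tfrac{\ell}{2\pi}=O(\ell\log\ell^{-1})\to 0$, the ratio $\sup_{I_j}\sec\phi/\inf_{I_j}\sec\phi$ should be bounded. But this inference is false near $\phi=\tfrac\pi2$: if $\phi_{\max}=\tfrac\pi2-\eps$ then $\sup\sec/\inf\sec \approx 1+\Delta\phi/\eps$, and on a hyperbolic collar one only knows $\eps \gtrsim \bar c_1\ell$. With the crude estimate $|I_j|=O(\log\ell^{-1})$ this gives $\Delta\phi/\eps = O(\log\ell^{-1})\to\infty$, so the ratio is \emph{not} controlled by $\Delta\phi\to0$. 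To do better you would have to insert the refined bound $|I_j|\lesssim \log(\inf_{I_j}\rho)^{-1}+1$ and observe that $|I_j|\cdot\sup_{I_j}\rho$ controls $\Delta\phi/\eps$ — but $\sup_{I_j}\rho$ is exactly what you are trying to bound in terms of $\inf_{I_j}\rho$, which is precisely the circularity you flag at the end. Using the explicit $\sec$ formula does not dissolve this circularity; it just reproduces it in trigonometric notation.

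The paper avoids the circularity cleanly and without the explicit formula. The only input used is the pointwise differential inequality $|\partial_s\log\rho|\le\rho$, which yields the bootstrapped two-sided bound $e^{-1}\rho(s_0)\le\rho(s)\le e\rho(s_0)$ whenever $|s-s_0|\le e^{-1}\rho(s_0)^{-1}$ (this is \eqref{est:rho-different-points-2}; the bootstrap closes because one fixes a single base point). The comparability then follows from a dichotomy: if $\inf_{I_j}\rho$ is small, then by \eqref{est:I_j} one has $|I_j|\lesssim \log(\inf_{I_j}\rho)^{-1}+1 < e^{-1}(\inf_{I_j}\rho)^{-1}$, so taking $s_0$ to be the \emph{minimiser} of $\rho$ on $I_j$ and applying \eqref{est:rho-different-points-2} immediately gives $\sup_{I_j}\rho\le e\inf_{I_j}\rho$; if instead $\inf_{I_j}\rho$ is bounded below, then $|I_j|$ is bounded and \eqref{est:rho-different-points-1} finishes. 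The key move you are missing is to anchor the bootstrap at the minimiser of $\rho$ — once this is done, there is no self-referential inequality and no need to take $\bar\ell$ small, which matters since the lemma is stated for all $\ell\in(0,\arsinh(1))$.

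\textbf{Minor issues in the count and length steps.} The claim ``$\BM^c\subset\{\rho<e^{-1/2}\}$'' is false: a point $s$ with $\rho(s)\ge e^{-1/2}$ lies in $\BM^c$ precisely when $\dist_\gE(s,\A)<4\log(\rho(s)^{-1})+2$, a condition that is by no means vacuous. Consequently the claimed separation $|s_{j+1}-s_j|>8$ is not justified as written. The paper's count argument is shorter and more robust: each component of $\BM^c$ not containing $\pm X(\ell)$ must contain a length-$2$ interval of energy at least $\eps_0$, and any such intervals from distinct components are disjoint, so their number is at most $E_0/\eps_0$; the two boundary intervals account for the $+2$. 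Your Fubini-based control on $|\A|$ is a reasonable alternative device, but the subsequent control on the number of internal gaps via a ``Vitali-type covering'' is not spelled out and does not obviously bound the number of \emph{gaps} (as opposed to the number of disjoint unit cylinders).
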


\begin{proof}[Proof of Lemma \ref{lemma:BM}]
The first claim immediately follows from the definition of $\BM$ as each connected component $I_j$ of $\BM^c$ must either contain $\pm X(\ell)$ or an interval of length $2$ with energy at least $\eps_0$. 
Similarly, for each connected component $I_j$ of $\BM^c$ we can choose a maximal family of disjoint intervals $I_j^k\subset I_j$ of length $2$ with energy at least $\eps_0$ and note that the total number of such intervals is bounded by $\frac{E_0}{\eps_0}$. As each point in $I_j$ will have distance at most $4\sup_{I_j} \log(\rho^{-1})+3$ from one of these intervals or from one of the endpoints $\pm X(\ell)$, we hence obtain that \eqref{est:I_j} holds.
To prove \eqref{est:rho-comp-Ij}, we first recall that 
since 
$\abs{\partial_s\log(\rho))}\leq \rho$ we have 
$e^{-1}\rho(s_0)\leq \rho(s)\leq e \rho(s_0)$
for any points $s,s_0\in [-X(\ell),X(\ell)]$ with $\abs{s-s_0}\leq e^{-1} \rho(s_0)^{-1}$, compare \eqref{est:rho-different-points-2}.
If $\sup_{I_j} \rho^{-1}$ is large enough so that 
$\abs{I_j}\leq C (\log(\sup_{I_j} \rho^{-1})+1)$ is smaller then $e^{-1} \sup_{I_j} \rho^{-1}$, we hence obtain that \eqref{est:rho-comp-Ij} holds true for $C=e$. Conversely, for any other connected component $I_j$ of $\BM$ we have a uniform upper bound on $\sup_{I_j} \rho^{-1}$ (depending only on $\eps_0$ and $E_0$) and hence a uniform upper bound on $\abs{I_j}$, so 
\eqref{est:rho-comp-Ij} follows from \eqref{est:rho-different-points-1}. 
\end{proof}

We are now in a position to complete the proofs of Lemmas \ref{lemma:I_lower_general} and \ref{lemma:I_lower_strong}. 
\begin{proof}[Proof of Lemma \ref{lemma:I_lower_general}]
We first note that if $\ell\geq \ell_1$, for some number $\ell_1>0$ determined below,  then \eqref{est:I_lower_general} is trivially satisfied as $\rho^{-2}\leq 4\pi^2\ell_1^{-2}$. 
So assume that $\ell\in(0,\ell_1)$.
In light of \eqref{est:lower-weak-BM}, it is enough to show that for any given subset $J\subset [-X(\ell),X(\ell)]$ and each $j$
$$ 
\ell\int_{(J \cap I_j) \times S^1} \left( |u_s|^2 - |u_\th|^2 \right) \rho^{-2} \, d\theta \, ds\geq  -C \cdot (1+\norm{\tau_g(u)}_{L^2(\Col,g)}^2) \cdot (1+\log(\ell^{-1}))
$$ where $I_j$ are the connected components of the set $\BM^c$ whose properties were discussed in Lemma \ref{lemma:BM} and whose measure satisfies in particular $\Leb^1(\BM^c)\leq C(1+\log(\ell^{-1}))< 2X(\ell)$, provided $\ell_1$ is chosen suitably small. So $\BM$ must be non-empty and hence 
each connected component $I_j$ of $\BM^c$ has at least one endpoint $s_j$ which is an element of $\BM$. 
Writing for short 
$\psi(s):= \int_{\{s\}\times S^1} \abs{u_s}^2-\abs{u_\th}^2 \, d\th$ 
we can therefore bound for each $s\in I_j$
$$\psi(s)\geq \psi(s_j)-\int_{I_j} \abs{\partial_s \psi} \, ds \geq -\vartheta(s_j)-\int_{I_j} \abs{\partial_s \psi} \, ds\geq 
-C\rho^4(s_j) -C\rho^2(s_j)\norm{\tau_g(u)}_{L^2}^2- \int_{I_j} \abs{\partial_s \psi} \, ds, $$
where we applied \eqref{est:theta-lower} in the last step and where norms are computed over the corresponding collar.  
Using \eqref{est:rho-comp-Ij}, 
$\abs{I_j} \leq C(\log(\ell^{-1})+1)$ and that $\ell\rho^{-1}\leq 2\pi$,
 we may hence bound 
\beqa \label{est:nearly-done}
\ell \int_{J \cap I_j} \psi(s)\rho^{-2}(s) \, ds& \geq  -C \ell\abs{I_j}\cdot \rho^{2}(s_j)-C \ell \abs{I_j}\cdot \norm{\tau_g(u)}_{L^2}^2- \abs{I_j} \int_{I_j} \rho^{-1}\abs{\partial_s \psi}\\
&\geq -C(1+\log(\ell^{-1}))\cdot \big[ 1+\norm{\tau_g(u)}_{L^2}^2 +\int_{I_j}\rho^{-1} \abs{\partial_s \psi} \big].
\eeqa
We now complete the proof of Lemma \ref{lemma:I_lower_general} by proving that  $\int\rho^{-1}\abs{\partial_s \psi} \leq C (1+\norm{\tau_g(u)}_{L^2}^2)$. To this end, we note that $\psi$ can be viewed as an integral of the real part of the function $\phi=\abs{u_s}^2-\abs{u_\theta}^2-2\i \langle u_s,u_\theta \rangle$ which represents the Hopf differential $\Phi=\phi \,dz^2$ of $u$ in collar coordinates. As the antiholomorphic derivative of the Hopf differential is controlled in terms of the (Euclidean) tension, in particular
$
\abs{ \partial_s \Rea( \phi )-\partial_\theta \Ima(\phi)
}=| \Rea( \dbar \phi ) | \leq 2 \left| \tau_\gE(u) \right| \left| du \right|_\gE ,
$
we have that 
\beqs
\int_{-X(\ell)}^{X(\ell)} \rho^{-1} \abs{ \partial_s \psi} \, ds \leq 2\int_{\Col} \rho^{-1} \left| \tau_\gE(u) \right| \left| du \right|_\gE d\th \, ds \leq CE_0^\half \norm{\tau_g(u)}_{L^2(\Col,g)}\leq C  (1+\norm{\tau_g(u)}_{L^2(\Col,g)}^2)
\eeqs
where we used $\tau_g(u)=\rho^{-2}\, \tau_\gE(u)$ in the penultimate step. Together with \eqref{est:lower-weak-BM}
and
 \eqref{est:nearly-done} this completes the proof of the lemma. 
\end{proof}

\begin{proof}[Proof of Lemma \ref{lemma:I_lower_strong}]
We first remark that it suffices to consider the case that $\ell\in (0,\ell_2)$ for a constant $\ell_2>0$ chosen below, as for larger values of $\ell$ the claim already follows from Lemma \ref{lemma:I_lower_general}.
To prove the  lower bound \eqref{claim:lemma22} on the weighted integral $\int \rho^{-2} \psi \, ds$,  $\psi:=\int_{\{s\}\times S^1} \abs{u_s}^2-\abs{u_\th}^2 \, d\th$ as above, we 
divide $[-X(\ell),X(\ell)]=K\cup K^c$ into the subset $K\subset \BM$ defined below, and its complement $K^c$ on which we shall simply apply Lemma \ref{lemma:I_lower_general} to  bound
\beq \label{est:int-over-J}
\ell\int_{K^c} \rho^{-2} \psi \, ds  \geq -C \cdot (1+\norm{\tau_g(u)}_{L^2(\Col,g)}^2) \cdot (1+\log(\ell^{-1})) \fs
\eeq
More precisely, we define
$
 K= \{ s \in \BM : \inj_g(s,\cdot)<\eps(\ell) \},$ for $\eps(\ell) := \La \cdot \ell^{\frac12(1+\de)}$ and  $\La>0$ determined below,
which we know is non-empty for 
$\ell\in(0,\ell_2)$ since $\de<1$, provided  $\ell_2$ is sufficiently small. 
We will prove below that this choice of $\eps(\ell)$ ensures that the (Lebesgue) measure of $K^c$ is small enough so that 
the assumption \eqref{ass:L_lower_lemma} implies that also 
\beq \label{est:ass-for-eta} 
\frac{1}{2\pi}\int_{K\times S^1}\abs{u_s} \geq \frac{c_0}{2} \ell^{-\frac{1}{4}(1+\de)}.
\eeq
At the same time the hyperbolic area of $K\times S^1$ is also small, namely
$$\int_{K\times S^1}\rho^2=\Area(K\times S^1,g)\leq \Area(\eps(\ell)\thin(\Col),g)\leq C\eps(\ell) \leq C \ell^{\half(1+\de)},$$ compare e.g.
 \cite [(A.2)]{RTZ}.
Combined, these two estimates imply that 
\beqas
\ell \int_{K \times S^1} |u_s|^2 \, \rho^{-2} \, d\th \, ds 
&\geq \ell \Big(\int_{K \times S^1} \rho^2 \, d\th \, ds \Big) ^{-1} \cdot \Big( \int_{K \times S^1} |u_s| \, d\th \Big)^{2} 
\geq c_1\ell^{-\de}
\eeqas
for some $c_1>0$. 
We note that combining this estimate with \eqref{est:int-over-J} and the estimate on the angular energy on $K$ already obtained in \eqref{est:lower-theta-BM} immediately yields the 
claim of the lemma. 

It hence remains to prove that \eqref{est:ass-for-eta} holds true for a suitably chosen $\La$. To this end we first recall that 
$\eps\thin(\Col)=\{p\in \Col: \inj_g(p)<\eps\}$ is given in collar coordinates by a cylinder $(-X_\eps(\ell),X_\eps(\ell))\times S^1$, for a number $X_\eps(\ell)$ which is such that 
$X(\ell)-X_\eps(\ell)\leq C\eps^{-1}$, compare \eqref{eq:X-delta}.
Combined with the properties of $\BM^c$ obtained in Lemma \ref{lemma:BM} this yields
$$ \mathcal{L}^1(K^c)\leq \sum_j\abs{I_j} +C\eps^{-1} \leq C(\log(\ell^{-1})+1)+C\La^{-1} \ell^{-\frac12(1+\de)}.$$
For $\La$ sufficiently large and
 $\ell_2>0$ sufficiently small we hence obtain that for $\ell\in(0,\ell_2)$
\beqas 
\frac1{2\pi}\int_{K^c\times S^1} \abs{u_s} \, d\th \, ds &\leq C(\mathcal{L}^1(K^c))^{\frac12} \leq C \big[ 
(\log(\ell^{-1}))^\frac12+1+\La^{-\half} \ell^{-\frac14(1+\de)}\big]\leq \frac{c_0}{2} \ell^{-\frac14(1+\de)}.
\eeqas
Combined with \eqref{ass:L_lower_lemma}  this implies the claim \eqref{est:ass-for-eta}, which completes the proof of the lemma.
\end{proof}

\begin{rmk}
\label{rmk:weak-sol-1}
The above proof of Theorem \ref{thm:1} applies with only minor modifications if $(u,g)$ is a weak solution of \eqref{eq:flow}: For almost every $t$ the maps 
$u(t)$ are of class $H^2$ in the interior of $M$ so Lemmas \ref{lemma:angular} and \ref{lemma:BM} may be applied. 
 Lemmas \ref{lemma:I_lower_general} and \ref{lemma:I_lower_strong} also apply without change at these times as their 
 proofs are based only on Lemmas \ref{lemma:angular} and \ref{lemma:BM}.
Since $g$ is Lipschitz, and hence differentiable almost everywhere, we can thus estimate 
$\frac{d\ell}{dt}$ precisely as in the above proof for almost every time, which yields the result.
\end{rmk}

\section{Proof of Theorem \ref{thm:2}}
We now turn to the proof of our second main result which gives a sharp criterion for finite-time degeneration for solutions of the rescaled flow. 
For this we exploit that 
at each time $t$  the map component $u(t)$ of a solution of \eqref{eq:flow-resc} is  a harmonic map,
so its Hopf differential is holomorphic which implies that the quantity $\psi(s)=\int_{\{s\}\times S^1}\abs{u_s}^2-\abs{u_\th}^2 \, d\th$ considered above is constant on each collar around a simple closed geodesic $\si(t)$ in $(M,g(t))$. Knowing that a map is harmonic furthermore allows us to  prove the following stronger relationship between $\psi$ and $\Len(u,\Col(\si))$, which will be the main tool in the proof of Theorem \ref{thm:2}.
\begin{lemma}\label{lemma:psi-harmonic}
For any numbers $\bar \kappa, C_1\in \R$ respectively $\bar \kappa\in\R$ and $\de,c_0>0$ there exist numbers $\bar \ell, C_2>0$, respectively $\bar \ell, c_1>0$ so that the following holds true  
for any complete Riemannian manifold $(N,g_N)$ whose sectional curvature is bounded from above
by $\bar \kappa$ and any hyperbolic collar $\Col(\ell)$ as in \eqref{eq:collar-general}, \eqref{def:constants-collar} for which $\ell\in (0,\bar \ell)$.

Suppose that $u \colon \Col(\ell)\to (N,g_N)$ is a harmonic map 
for which the quantity $\Len$ introduced in \eqref{def:L} is bounded above by $\Len(u,\Col(\ell))\leq C_1(\log(\ell^{-1}))^{\half}$. Then $\psi= \int_{\{s\}\times S^1}\abs{u_s}^2-\abs{u_\th}^2 \, d\th$, which is constant on the collar, is bounded from above by
\beq \label{claim:psi0-upper}
\psi \leq C_2\ell^2[\log(\ell^{-1})+1].
\eeq
Conversely, for harmonic maps $u \colon \Col(\ell)\to (N,g_N)$ with $\Len(u,\Col(\ell))\geq c_0(\log(\ell^{-1}))^{\half+\de}$
we have
\beq
\label{claim:psi0-lower}
\psi\geq c_1\ell^2\log(\ell^{-1}(t))^{1+\de}.
\eeq
\end{lemma}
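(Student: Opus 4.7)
The starting point is that, since $u$ is harmonic, the Hopf differential $\phi = |u_s|^2 - |u_\theta|^2 - 2i\langle u_s, u_\theta\rangle$ is holomorphic in $z = s + i\theta$ and $2\pi i$-periodic, hence admits a Laurent expansion $\phi(s,\theta) = \sum_{n \in \Z} c_n e^{ns}e^{in\theta}$; in particular $\fint_{S^1}\phi(s,\cdot)\, d\theta = c_0$ is independent of $s$, so $\psi(s) = 2\pi\Rea(c_0)$ is constant on $\Col(\ell)$. The correct heuristic is a $\theta$-independent map (a parametrized geodesic), for which $|u_s|^2 \equiv \psi/(2\pi)$ and $\Len = 2X\sqrt{\psi/(2\pi)}$, yielding the expected relation $\psi \sim \ell^2\Len^2$ via $X \sim \pi^2/\ell$.

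For \eqref{claim:psi0-lower} I would apply pointwise Cauchy--Schwarz to obtain $\fint_{S^1}|u_s|\, d\theta \le \sqrt{(\psi + \vartheta(s))/(2\pi)} \le \sqrt{\psi/(2\pi)} + \sqrt{\vartheta(s)/(2\pi)}$. Integrating over $s \in [-X, X]$ and splitting into $\BM$ and $\BM^c$, the term $\int_{-X}^X \sqrt{\vartheta}\, ds$ should be bounded by $C\sqrt{\log(\ell^{-1})}$: on $\BM$ the angular-energy estimate \eqref{est:theta-lower} (with vanishing tension since $u$ is harmonic) gives $\int_\BM \sqrt{\vartheta}\, ds \le C$, while on $\BM^c$ Cauchy--Schwarz together with Lemma \ref{lemma:BM} and $\int\vartheta \le 2E_0$ produces $\int_{\BM^c}\sqrt{\vartheta}\, ds \le C\sqrt{\log(\ell^{-1})}$. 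Using $X \le \pi^2/\ell$ and the hypothesis $\Len \ge c_0\log(\ell^{-1})^{(1+\delta)/2}$, and choosing $\bar\ell$ small enough that the exponent $(1+\delta)/2$ dominates $1/2$, one concludes $\sqrt{\psi} \ge c_1\ell\log(\ell^{-1})^{(1+\delta)/2}$, giving the claim.

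For the upper bound \eqref{claim:psi0-upper}, Cauchy--Schwarz now runs in the wrong direction, so one needs a lower bound on $\Len$ in terms of $\sqrt{\psi}$. Decomposing $u(s,\theta) = \bar u(s) + v(s,\theta)$ with $\bar u(s) = \fint_{S^1} u(s,\cdot)\, d\theta$ and $\fint_{S^1} v\, d\theta = 0$, Fourier orthogonality yields the identity
$$\psi = 2\pi|\bar u_s|^2 + \int_{S^1}\bigl(|v_s|^2 - |v_\theta|^2\bigr)\, d\theta,$$
while Jensen gives $\fint_{S^1}|u_s|\, d\theta \ge |\bar u_s|$. The plan is to identify a subset $S \subset [-X, X]$ of measure $|S| \ge 2X - o(X)$ on which $|\bar u_s|^2 \ge \psi/(4\pi)$; this then produces $\Len \ge |S|\sqrt{\psi/(4\pi)} \sim 2X\sqrt{\psi/(4\pi)}$, and together with $\Len \le C_1\sqrt{\log(\ell^{-1})}$ and $X \gtrsim \ell^{-1}$ gives $\psi \le C_2\ell^2\log(\ell^{-1})$.

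Equivalently, applying Poincar\'e--Wirtinger on $S^1$ to $|u_s| - \fint|u_s|$ yields $(\fint|u_s|)^2 \ge (\psi + \vartheta - \int_{S^1}|u_{s\theta}|^2\, d\theta)/(2\pi)$, so the task reduces to showing $\int_{S^1}|u_{s\theta}(s,\cdot)|^2\, d\theta \le \psi/2$ on a set of near-full measure. This is the main obstacle: the $H^2$ estimate \eqref{est:H2} (with tension vanishing) only gives $\int_{\Cyl_1(s_0)}|u_{s\theta}|^2 \le CE(u;\Cyl_1(s_0))$, and since for $s_0 \in \BM$ deep in the collar $E(u;\Cyl_1(s_0)) = \psi + \int_{s_0-1}^{s_0+1}\vartheta \approx \psi$, this yields only $\int|u_{s\theta}|^2 \lesssim \psi$ rather than the strict $\le \psi/2$ one requires. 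The refinement exploits holomorphicity of $\phi$: the higher Fourier modes $c_n e^{ns}$ for $n \ne 0$ decay exponentially in the distance from both ends of the strip (a Phragm\'en--Lindel\"of-type behavior), so that in the bulk $|v_s|^2 \approx |v_\theta|^2$ and $\int|u_{s\theta}|^2$ is dominated by $\vartheta$ plus an exponentially small error. Making this quantitative on a subset $S$ with $|S| \ge 2X - C\log(\ell^{-1})$ is the technically most delicate step; once in place, the integrated Jensen inequality delivers \eqref{claim:psi0-upper}.
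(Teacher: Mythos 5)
Your proof of the lower bound \eqref{claim:psi0-lower} is correct and in fact simpler than the paper's: using only the Cauchy--Schwarz bound $\al(s)\le\sqrt{(\psi+\vartheta(s))/2\pi}$ and the angular-energy decay \eqref{est:theta-lower} (which on $\BM$ gives $\vartheta(s)\le C\rho(s)^4$, so $\int_\BM\sqrt\vartheta\le C\int\rho^2\le C$, while on $\BM^c$ one Cauchy--Schwarzes against $|\BM^c|\lesssim\log(\ell^{-1})$), you get $\Len\le 2X(\ell)\sqrt{\psi/2\pi}+C\sqrt{\log(\ell^{-1})}$ and the hypothesis $\Len\ge c_0\log(\ell^{-1})^{(1+\de)/2}$ forces $\psi\gtrsim\ell^2\log(\ell^{-1})^{1+\de}$ once $\bar\ell$ is small. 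This route does not need the second-derivative estimate at all, whereas the paper's argument for \eqref{claim:psi0-lower} does; your version buys a genuine simplification here.

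Your treatment of the upper bound \eqref{claim:psi0-upper} is, however, incomplete, and you have correctly located the missing step: the Jensen/Poincar\'e reduction to bounding $\int_{S^1}|\na_\th u_s|^2$ requires a bound on the second derivatives that beats $E(u;\Cyl_1(s_0))\approx\psi$, and the standard estimate \eqref{est:H2} does not give that. The paper supplies exactly the missing ingredient as Lemma \ref{lemma:second-der}: a \emph{refined} $H^2$ estimate
$\int_{s_0-\half}^{s_0+\half}\int_{S^1}\abs{\na^2 u}^2\le C\int_{s_0-1}^{s_0+1}\vartheta+C\norm{\tau_\gE(u)}^2$
whose right-hand side involves only the \emph{angular} energy (and tension), not the full energy. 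The proof is a short integration-by-parts argument using $\na^2_{ss}u=\tau(u)-\na^2_{\th\th}u$ plus a Ladyzhenskaya-type interpolation in the curvature term; it does not go through Fourier modes of the Hopf differential. Your proposed Phragm\'en--Lindel\"of route controls only the Hopf differential $\phi$, which captures the conformal/anticonformal part of $du$ but not the full Hessian; going from smallness of the nonzero Fourier modes of $\phi$ to smallness of $\int|u_{s\th}|^2$ would itself need extra PDE input, so as stated this is a heuristic rather than a proof.

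Once Lemma \ref{lemma:second-der} is in hand, \eqref{est:ang-harmonic} gives $\vartheta(s)+\int_{s-\half}^{s+\half}\int_{S^1}\abs{\na^2u}^2\le C\ell^4$ on $\Btild$, and your Poincar\'e approach can be made to close after a case split on whether $\psi\gtrsim\ell^4$. The paper avoids even that dichotomy by the algebraic rearrangement
$\psi^\half\abs{\sqrt{2\pi}\al-\psi^\half}=\tfrac{\psi^\half}{\sqrt{2\pi}\al+\psi^\half}\abs{2\pi\al^2-\psi}\le\big(\int_{S^1}\abs{\na^2u}^2\big)^{\half}[\psi+\vartheta]^{\half}+\vartheta$,
so the Hessian contribution enters only multiplicatively as $(C\ell^4)^{\half}\psi^{\half}$ and can be absorbed unconditionally; averaging \eqref{est:psi0} over $\Btild$ and using $\abs{\Btild}\ge cX(\ell)\ge c\ell^{-1}$ then gives $\psi\le\half\psi+C\ell^2\log(\ell^{-1})+C\ell^4$ directly. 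You should state and use Lemma \ref{lemma:second-der}; with it, either your route or the paper's closes the upper bound.
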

For the proof of this lemma we 
use the following  variation of the $H^2$ estimate \eqref{est:H2} from Lemma \ref{lemma:angular}, in which we can replace the full energy on the right hand side by only the angular energy. For the sake of completeness, we include a short proof at the end of this section.

\begin{lemma} \label{lemma:second-der}
In the setting of Lemma \ref{lemma:angular} 
we have that for any $s_0\notin\Atild $ %
\beq \label{est:second-der} 
\int_{s_0-\half}^{s_0+\half} \int_{S^1} \abs{\na^2 u}^2 d\th\,ds\leq C \int_{s_0-1}^{s_0+1}\vartheta(s) \, ds + C \, \norm{\tau_{\gE}(u)}^2_{L^2(\Cyl_{2}(s_0),\gE)} \cm
\eeq
where $C$ depends only on an upper bound on the sectional curvature of $(N,g_N)$. 
\end{lemma}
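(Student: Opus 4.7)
The plan is to refine the standard $H^2$-estimate \eqref{est:H2} by choosing the cutoff to depend only on the cylinder coordinate $s$, so that differentiating it produces the angular energy $\vartheta$ rather than the full energy. Throughout I would work extrinsically via Nash embedding $N \hookrightarrow \R^n$, using $|\nabla^2 u|^2 \leq |D^2 u|^2$ and $\Delta u = \tau_\gE(u) + A(u)(du,du)$ with $|A(u)|_\infty \leq C(\bar\kappa)$. The first and key step is the Euclidean pointwise identity
\begin{equation*}
|D^2 u|^2 = (\Delta u)^2 + 2\big(u_{s\th}^2 - u_{ss}\cdot u_{\th\th}\big).
\end{equation*}
Testing against $\varphi(s)^2$ for a bump $\varphi$ supported in $[s_0-1,s_0+1]$ and identically $1$ on $[s_0-\tfrac12,s_0+\tfrac12]$, I would perform two integrations by parts in $s$ followed by one in $\th$ on the cross term $\int \varphi^2 u_{ss}\cdot u_{\th\th}$; using the $S^1$-periodicity, the interior contribution exactly cancels $\int \varphi^2 u_{s\th}^2$ and only a boundary-type error $-4\int \varphi\varphi' u_\th \cdot u_{s\th}$ survives. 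Absorbing via Young's inequality and $|u_{s\th}|^2 \leq |D^2 u|^2$ yields
\begin{equation*}
\int \varphi(s)^2 |D^2 u|^2 \, d\th\, ds \leq 2 \int \varphi^2 (\Delta u)^2 \, d\th\, ds + C\int_{-1}^{1} \vartheta(s)\, ds.
\end{equation*}

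The next step bounds $(\Delta u)^2 \leq 2|\tau_\gE|^2 + C|du|^4$ and deals with the quartic curvature term. The tension contribution is already in the required form since $\supp\varphi \subset \Cyl_1(s_0) \subset \Cyl_2(s_0)$. For $\int \varphi^2 |du|^4$, the hypothesis $s_0 \notin \A$ yields $E(u;\Cyl_1(s_0)) < \eps_0(\bar\kappa)$, so standard $\eps$-regularity provides $\|du\|_{L^\infty(\Cyl_{3/4}(s_0))} \leq C(\bar\kappa)$, reducing matters to showing the auxiliary energy bound
\begin{equation*}
\int_{\Cyl_{3/4}(s_0)} |du|^2 \, d\th\, ds \leq C\int_{-1}^{1}\vartheta(s)\, ds + C\|\tau_\gE\|_{L^2(\Cyl_2(s_0))}^2.
\end{equation*}
To prove this, I would exploit the antiholomorphicity of the Hopf differential: writing $\psi(s) = \int_{S^1}(|u_s|^2 - |u_\th|^2)\,d\th$, the identity $|\partial_s \psi| \leq 2\|\tau_\gE(s,\cdot)\|_{L^2(S^1)}\|u_s(s,\cdot)\|_{L^2(S^1)}$ controls the total variation of $\psi$ over $\Cyl_2(s_0)$ in terms of $\|\tau_\gE\|_{L^2(\Cyl_2)}$ and $\|u_s\|_{L^2(\Cyl_2)}$. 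A pigeonhole choice of reference slice $s_* \in [s_0+1,s_0+2]$ where $|\psi(s_*)|$ is controlled by the annular energy (hence by $\eps_0$), followed by propagation of this bound into $\Cyl_1(s_0)$, gives a uniform estimate for $\psi$ over $\Cyl_1(s_0)$. Since $\int_{S^1}|u_s|^2 = \vartheta + \psi$ on each slice, this yields the auxiliary energy bound after rearrangement.

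The main obstacle is the third step, namely producing a \emph{sharp} bound on $\int |u_s|^2$ by angular energy on $\Cyl_1(s_0)$ and tension on $\Cyl_2(s_0)$ without picking up additive universal constants that would violate the scaling of the inequality. The key trick is that $\eps_0$ may be fixed small (depending only on $\bar\kappa$) so that the contributions arising from $\eps$-regularity and from the pigeonhole bound on $\psi(s_*)$, all being controlled by $\eps_0 \cdot \|du\|_{L^2}^2$-type terms, can be absorbed back into $\int_{\Cyl_{3/4}}|du|^2$ via Young's inequality. Combining the three steps with $\varphi$ as above yields the required inequality \eqref{est:second-der}.
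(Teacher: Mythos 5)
There is a genuine gap, and it sits exactly where you locate ``the main obstacle''. Once you bound $(\Delta u)^2\leq 2\abs{\tau_\gE(u)}^2+C\abs{du}^4$ you are committed to controlling $\int\varphi^2\abs{du}^4\geq\int\varphi^2\abs{u_s}^4$ by $C\int\vartheta+C\norm{\tau_\gE(u)}_{L^2}^2$, and this is false. Take $u(s,\th)=\gamma(cs)$ for a unit-speed geodesic $\gamma$ in $N$, with $c$ small enough that $E(u;\Cyl_1(s_0))<\eps_0$: then $u_\th\equiv0$, so $\vartheta\equiv0$, and $\tau_\gE(u)=c^2\na_{\gamma'}\gamma'\equiv0$, yet $\int\varphi^2\abs{du}^4\sim c^4>0$. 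The same example falsifies your auxiliary bound $\int_{\Cyl_{3/4}(s_0)}\abs{du}^2\leq C\int\vartheta+C\norm{\tau_\gE}^2$ (its left side is $\sim c^2>0$; your Hopf-differential argument can only show $\abs{\psi}\leq C\eps_0$, not $\psi=0$), and the proposed rescue --- absorbing the resulting $\eps_0\cdot\norm{du}_{L^2}^2$-type terms ``back into $\int_{\Cyl_{3/4}}\abs{du}^2$'' --- is not available, since that quantity appears on neither side of \eqref{est:second-der}. The best your chain of estimates can yield is \eqref{est:second-der} with an extra additive $C\eps_0^2$, which would destroy the application: \eqref{est:ang-harmonic} needs the right-hand side to be $O(\ell^4)$. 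Note that the lemma itself survives the geodesic example only because the \emph{intrinsic} Hessian of $\gamma(cs)$ vanishes while the extrinsic one does not; the loss is already incurred in your very first inequality $\abs{\na^2u}^2\leq\abs{D^2u}^2$, since $\abs{D^2u}^2$ contains $\abs{A(u)(u_s,u_s)}^2\sim\abs{u_s}^4$ with no angular factor. (A secondary issue: $L^2$ control of the tension gives $u\in H^2$, hence $du\in L^p$ for all $p<\infty$ but not $du\in L^\infty$, so the claimed $\eps$-regularity bound $\norm{du}_{L^\infty}\leq C(\bar\kappa)$ is also unjustified.)

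The repair is to stay intrinsic throughout, which is what the paper does. Writing $\na^2_{s,s}u=\tau_\gE(u)-\na^2_{\th,\th}u$ and expanding $\int\varphi^2\abs{\na^2u}^2$, the mixed term is integrated by parts just as in your first step, except that commuting covariant derivatives now produces the curvature commutator $2\bar\kappa\int\varphi^2\abs{u_\th}^2\abs{u_s}^2$ in place of a pure $\abs{du}^4$ term. Every quartic term then carries a factor of $\abs{u_\th}^2$; the embedding $W^{1,1}_0\hookrightarrow L^2$ converts $\int\varphi^2\abs{u_\th}^2\abs{u_s}^2$ into a product of an energy-type factor and a $\hat\vartheta$-type factor, so that after using the smallness of $\eps_0$ and the crude $H^2$ bound \eqref{est:H2}, every term on the right is genuinely of the form $C\hat\vartheta(s_0)+C\norm{\varphi\tau_\gE(u)}_{L^2}^2$ or absorbable into the left-hand side, with no leftover additive constant.
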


Given a harmonic map $u$ from a hyperbolic collar as in Lemma \ref{lemma:psi-harmonic} we can combine the above lemma with the angular energy estimate of Lemma \ref{lemma:angular} to conclude that for $\ell\in(0,1)$
\beq \label{est:ang-harmonic}
\vartheta(s)+ \int_{s-\half}^{s+\half} \int_{S^1} \abs{\na^2 u}^2 d\th\,ds\leq  C\ell^4
\text{ on } \Btild:= \{s: \dist_{\gE}(s,\Atild)\geq 4\log(\ell^{-1})+2 \}.
\eeq 
We also note that the measure of the complement of $\Btild\subset [-X(\ell),X(\ell)]$ can be bounded as explained in the first part of the proof of Lemma \ref{lemma:BM}, resulting in 
\beq \label{est:meas-Btild-comp}
\Leb^1(\Btild^c)\leq C(\log(\ell^{-1})+1)\leq X(\ell),
\eeq
where the last inequality holds true as $\ell\in (0,\bar \ell)$ for $\bar \ell\in(0,1)$ chosen sufficiently small.

\begin{proof}[Proof of Lemma \ref{lemma:psi-harmonic}]
Let $u$ be as in the lemma. In both cases we compare  $\psi$ to 
$\al(s):= \fint_{\{s\}\times S^1}\abs{u_s} \,d\th,$
and already note that  
\beq \label{est:al}
\al(s)^2\leq \fint_{\{s\}\times S^1}\abs{u_s}^2\,d\th \leq \psi+\vartheta(s) \text{ and  that } \Len(u,\Col(\si))=\int_{-X(\ell)}^{X(\ell)}\al \,ds.
\eeq 
We now estimate
\beqas
 \psi^\half\cdot \abs{\sqrt{2\pi}\al(s)-\psi^\half} &=\frac{\psi^\half}{\sqrt{2\pi}\al(s)+\psi^{\half}} \cdot \abs{2\pi\al(s)^2-\psi}
\leq \int_{\{s\}\times S^1} \abs{\al(s)^2 -\abs{u_s}^2 +\abs{u_\th}^2}\, d\th 
\\
&\leq C \bigg(\int_{\{s\}\times S^1} \babs{\al(s)-\abs{u_s}}^2 \, d\th \bigg)^{\half} \bigg(  \int_{\{s\}\times S^1}  \abs{u_s}^2 \, d\th \bigg)^{\half}+\vartheta(s)\\
&\leq C\bigg( \int_{\{s\}\times S^1}  \abs{\na^2_{s,\th} u}^2 \,d\th\bigg)^{\half}\cdot \big[ \psi+\vartheta(s)\big]^{\half} +\vartheta(s) \cm
\eeqas
where we used \eqref{est:al} in both the penultimate and the last step.
We thus obtain from \eqref{est:ang-harmonic} that 
\beq \label{est:psi0}
\psi^\half\cdot \babs{\psi^\half-\sqrt{2\pi} \int_{s-\half}^{s+\half} \al}\leq C \ell^{2}\cdot [\psi^\half+\ell^{2}] \text{ for every  } s\in\Btild.
\eeq
Suppose now that $\Len(u,\Col(\si)) \leq C_1\log(\ell^{-1})^{\half}$. 
As \eqref{est:meas-Btild-comp} implies that $\mathcal{L}^1(\Btild)\geq X(\ell)\geq c\ell^{-1}>0$, we can now combine \eqref{est:psi0} and \eqref{est:al} to conclude that in this case
\beqa 
\psi= \fint_{\Btild}\psi \,ds & \leq C \psi^{\half}\big[  X(\ell)^{-1} \int_{-
X(\ell)}^{X(\ell)}\al(s) \, ds+\ell^{2}\big] +C\ell^{4} \leq \half \psi+ C\ell^2 \log(\ell^{-1})+C\ell^{4}
\eeqa 
which gives the claimed bound \eqref{claim:psi0-upper} on $\psi$.

It hence remains to prove that if instead $\Len(u, \Col(\si)) \geq c_0\log(\ell^{-1})^{\half(1+\de)}$, then we obtain the lower bound \eqref{claim:psi0-lower} on $\psi$.
To this end, we let $\Btild_{\text{-}}$ be the set of all $s\in \Btild$ for which the interval $[s-\half,s+\half]$ is fully contained in $\Btild$. We note that since the number of connected components of $\Btild$ is bounded uniformly in terms of $E_0$ and $\eps_0$, we have that $\Leb^1(\Btild\setminus \Btild_{\text{-}})\leq C$ and hence  $\Leb^1((\Btild_{\text{-}})^c)\leq C(1+\log(\ell^{-1}))$, compare \eqref{est:meas-Btild-comp}. Thus
\beqs
\int_{(\Btild_{\text{-}})^c}\al \leq C \Leb^1((\Btild_{\text{-}})^c)^{\half} E_0^\half\leq C(\log(\ell^{-1})^{\half}+1)\leq \frac{c_0}2 \log(\ell^{-1})^{\half(1+\de)},
\eeqs
where the last estimate holds as we may assume that $\bar \ell$ was chosen sufficiently small (depending on $c_0,\de>0$). Hence we must have that 
\beq\label{est:al-lower}
 \int_{\Btild_{\text{-}}} \al \geq \frac{c_0}2 \log(\ell^{-1})^{\half(1+\de)}.\eeq
 We first claim that this implies that
$\psi\geq c\ell^{4}>0$.
Indeed if we had $\psi\leq c\ell^{4}$ we would have that $\al(s)\leq \vartheta(s)^{\half} +c\ell^2\leq C\ell^2$ for all $s\in\Btild$ and hence $\int_{\Btild_{\text{-}}} \al\leq \int_{\Btild} \al\leq C\ell$ contradicting \eqref{est:al-lower} if $\bar \ell>0$ is sufficiently small. 

The right hand side of \eqref{est:psi0} 
is hence bounded by $C\ell^2\psi^\half$ for every $s\in\Btild$ and thus
$$\psi^\half\geq \sqrt{2\pi} \Leb(\Btild)^{-1} \int_{\Btild} \int_{s-\half}^{s+\half}\al -C\ell^2 
\geq 
c\ell \int_{\Btild_{\text{-}}} \al-C\ell^2 \geq \sqrt{2c_1} \ell \log(\ell^{-1})^{\half(1+\de)}-C\ell^2$$
for some $c_1>0$, yielding \eqref{claim:psi0-lower}  for $\bar \ell$ sufficiently small.
\end{proof}

We are now in a position to prove our second main theorem.

\begin{proof}[Proof of Theorem \ref{thm:2}]
Suppose first that $(u,g)$ is a smooth solution of the rescaled flow \eqref{eq:flow-resc} and let $\si_0\subset M$ be a homotopically non-trivial simple closed
curve. We denote by $\si(t)$  the geodesic in $(M,g(t))$ homotopic to $\si_0$, and by $\psi(t)$ the corresponding quantity 
considered in Lemma \ref{lemma:psi-harmonic}. We note that $\psi$ is related to the principal part of the Fourier expansion $\Phi(u,g)=\sum b_j e^{jz} \, dz^2$, $z=s+\i\th$ on $\Col(\si(t))$ by $\psi=2\pi\Rea(b_0)$.
We also recall that if $g$ is either a horizontal curve of hyperbolic metrics on a closed surface, i.e.~such that $\partial_t g=\Rea(\Psi(t))$ for holomorphic quadratic differentials $\Psi$, or a curve of metrics on the cylinder as described in Appendix \ref{appendix:cyl}, then  $\ell(t)=L_{g(t)}(\si(t))$  evolves by 
$\frac{d\ell}{dt}=-\frac{2\pi^2}{\ell}\Rea(b_0(\Psi))$, see e.g.~\cite{Wolpert}. In the present situation hence 
\beq \label{eq:length-evol-harmonic}
\frac{d}{dt} \log(\ell^{-1})=\pi\ell^{-2}\cdot \psi
\eeq
which allows us to derive the theorem from Lemma \ref{lemma:psi-harmonic} as follows: 

Suppose first that 
$(u,g)$ is as in part \eqref{item:thm2-1} of the theorem and let $\bar \ell>0$ and $C_2$ be as in Lemma \ref{lemma:psi-harmonic}. This lemma yields that at any time $t\in [0,T)$ 
for which $\ell(t)=L_{g(t)}(\si(t))\in(0,\bar \ell)$
we have 
$\psi(t)\leq C_2\ell(t)^2[\log(\ell^{-1}(t))+1]$ on $\Col(\si(t))$,
and hence also 
$$\frac{d}{dt} (\log(\ell^{-1}(t))+1)\leq C_2 \pi
 (\log(\ell^{-1}(t))+1)
$$
which excludes the possibility that $\ell(t)\to 0$ in finite time. As this argument applies for every non-trivial simple closed curve in $M$, we hence find that the injectivity radius cannot go to zero in finite time, i.e.~that a finite-time degeneration of the metric is excluded. 

Conversely, if $(u,g)$ and $\si(t)$ are as in part \eqref{item:thm2-2} of the theorem for $\bar \ell$ chosen as in Lemma \ref{lemma:psi-harmonic}, then this lemma yields that for every $t\in [T_1,T)$
we have $
\psi(t)\geq c_1 \ell^2\log(\ell^{-1}(t))^{1+\de}$
on $\Col(\si(t))$ 
for a  number $c_1>0$ that is independent of time.
Inserting this lower bound on $\psi$ into \eqref{eq:length-evol-harmonic} then immediately gives that 
 $$\frac{d}{dt} \log(\ell^{-1}(t))\geq  c_1\pi\log(\ell^{-1})^{1+\de} \text{ for } t\in [T_1,T) .
$$
As $\de>0
$, the solution of the flow must hence degenerate in finite time. 

If $(u,g)$ is only a weak solution, the above proof still applies because the maps are still harmonic and hence smooth in the interior, so we can directly apply Lemma \ref{lemma:second-der} at every time, and obtain the same estimates as above at every time at which 
the curve of metrics is differentiable.
\end{proof}

We finally provide a proof of Lemma \ref{lemma:second-der} which follows by well-known arguments that have been used in particular in \cite{DT, Struwe85} to establish $H^2$ bounds such as \eqref{est:H2} in low-energy regions. 

\begin{proof}[Proof of Lemma \ref{lemma:second-der}]
Given any $s_0\notin \Atild$ we let $\varphi=\varphi_0(\cdot-s_0)$, where $\varphi_0$ is a cut-off function as in Lemma \ref{lemma:angular}. As $\na_{s,s}^2u=\tau(u)-\na_{\th,\th}^2u$, where we write for short $\tau=\tau_\gE$, we have
\beqas
I &:= \int\varphi^2\abs{\na^2u}^2 = \int\varphi^2 \Big( \abs{ \tau(u) - \na^2_{\th,\th}u }^2+ \abs{\na^2_{\th,\th}u}^2  + 2\abs{\na^2_{s,\th}u}^2 \Big) \,  \\
&\phantom{:}\leq 
\bnorm{\varphi\tau(u)}_{L^2}^2 +2\norm{\varphi\tau(u)}_{L^2}\cdot \big(\int\varphi^2 \abs{\na_{\th,\th}^2u}^2 \big)^\half+ 2\int\varphi^2 \abs{\na^2_{\th,\th}u}^2\\
&\quad + 2\int\varphi \abs{\varphi'} \abs{u_\th} \, \abs{\na_{s,\th}^2 u} + 2 \bar \kappa \int \varphi^2 \abs{u_\th}^2 \, \abs{u_s}^2 + 2\int\varphi^2 \na^2_{\th,\th}u\cdot \na^2_{s,s}u,
\eeqas 
where $\bar \kappa\geq 0$ is an upper bound on the sectional curvature of the target.
As the last term is equal to $- 2\int\varphi^2 \abs{\na^2_{\th,\th}u}^2+2 \int \varphi^2 \tau(u) \na^2_{\th,\th}u $, we hence obtain that 
\beq  \label{est:remainder}
I\leq \norm{\varphi \tau(u)}_{L^2}^2+C I^\half\cdot \big[\norm{\varphi \tau(u)}_{L^2}+\hat\vartheta(s_0)^\half\big]
+R \leq C\norm{\varphi \tau(u)}_{L^2}^2+ C \hat\vartheta(s_0)+\frac14 I+R
\eeq
where we write for short $\hat\vartheta(s_0)= \int_{s_0-1}^{s_0+1}\vartheta(s) ds$ and set $R:= 2\bar\kappa \int \varphi^2 \abs{u_\th}^2 \abs{u_s}^2$. 
Using that $W_0^{1,1}( [-1,1]\times S^1)$ embeds continuously into $L^2( [-1,1]\times S^1)$ we may now estimate
\beqas
R &\leq C\bnorm{\varphi\abs{u_s}^2}_{L^2} \cdot
\bnorm{\varphi\abs{u_\th}^2}_{L^2}
\leq C\bnorm{\na(\varphi\abs{u_s}^2)}_{L^1}  \cdot
\bnorm{\na(\varphi\abs{u_\th}^2)}_{L^1}\\
 &\leq C\big[ E(u;\Cyl_1(s_0)) +E(u;\Cyl_1(s_0))^\half \norm{\varphi\na^2 u }_{L^2}\big] \cdot \big[ \hat\vartheta(s_0)+\hat\vartheta(s_0)^\half \norm{ \varphi \na^2u}_{L^2} \big]\\
&\leq C\eps_0 \hat\vartheta(s_0)+CI^{\half}\cdot \big[ \eps_0\hat\vartheta(s_0)^\half+\eps_0^{\half} \hat\vartheta(s_0)+C\eps_0^\half\hat\vartheta(s_0)^\half I^\half]
\\
 &\leq 
\frac14 I+C\hat\vartheta(s_0) +C\hat\vartheta(s_0)  I.\eeqas
We can now use the $H^2$ estimate \eqref{est:H2} from Lemma \ref{lemma:angular} to control the last term in this estimate by 
$C\hat\vartheta(s_0)  I\leq C \hat\vartheta(s_0) (\eps_0+\norm{ \varphi\tau(u)}_{L^2}^2)\leq C\hat\vartheta(s_0) +C\norm{\varphi\tau(u)}_{L^2}^2$ and insert the resulting bound on $R$ into \eqref{est:remainder} to obtain the claim of the lemma. 
\end{proof}

\section{Proof of Theorem \ref{thm:cyl} }\label{section:cyl}
In this section we construct settings in which Theorem \ref{thm:1} and Theorem \ref{thm:2} assure that solutions of Teichm\"uller harmonic map flow \eqref{eq:flow},  respectively of the rescaled flow \eqref{eq:flow-resc}, from the cylinder $\Cyl=[-1,1]\times S^1$ degenerate in finite time.
This smooth target will be obtained
 as a warped product  $N=\R \times_f \Nt$
of $\R$ with 
\beq \label{def:g_N_tilde}
(\Nt,g_\Nt):= (\R^3, \rho_\Nt^2 \gE), \text{ where }
\rho_\Nt^2=  C_{\Nt} \exp \left(1-\frac{1}{1-|x|^2}\right)\mathbb{1}_{\{|x|<1\}} + 1, 
\eeq
for $C_\Nt>0$  a (large) constant that we choose below.  Before we analyse solutions $u=(v,w)\colon\Cyl\times [0,T]\to \R \times_f \Nt$ of the flow, or even just discuss the choice of the warping function $f$, we discuss some key properties of maps 
$w\colon\Cyl\to \Nt$. To this end we first note that for large $C_\Nt$ the metric $\gNt$, which is Euclidean outside the Euclidean unit ball but highly concentrated in $\{\abs{x}_\Eucl<1\}$,  is so that the function $r\mapsto r^2 \rho_{\Nt}^2(r)$ has exactly two local extrema $0<r_\mathrm{max}<r_\mathrm{min}<1$ on $[0,1]$, where
$r_\mathrm{max}\downto \frac{\sqrt5-1}2$ and $r_\mathrm{min}\upto 1$ 
 as $C_{\Nt}\to \infty$. 
 
 We will later choose the $w$ component of our initial map $u_0=(v_0,w_0)$ of the flow so that its image is disjoint from the region $\{\abs{x}_\Eucl<1\}$ and first prove the following lower bound on the area of maps $w \colon \Cyl\to (\Nt,\gNt)$ which are obtained by continuously deforming such a map. 

\begin{lemma} \label{lemma:disjoint}
For any $E_0<\infty$ and $z_0>1$ and for  sufficiently large  $C_{\Nt}$ (depending on $E_0$ and $z_0$) we have that the following holds true for the metric $g_{\Nt}$ defined in \eqref{def:g_N_tilde}:
\newline
Let $(w_t)_{t\in[0,T)}$ be any continuous family of smooth, rotationally symmetric maps 
$
w_t \colon \Cyl:=[-1,1]\times S^1\to \R^3 $, 
 $  w_t(x,\th) = \big(r_t(x)e^{\i\th},z_t(x)\big)$,
with fixed boundary values $r_t(\pm1) = r_0\in (0,z_0]$  and  $z_t(\pm1) = \pm z_0$. Suppose that $\Area_{(\Nt, g_{\Nt})}(w_t)\leq E_0$ for every $t\in [0,T)$ and that the image of the initial map $w_0$ is disjoint from the Euclidean open unit ball. Then for every $t\in [0,T)$
\beq \label{claim:lemma-disj-1}
\Area_{(\Nt, g_{\Nt})} (w_t) \geq 2\pi. 
\eeq 
Furthermore, denoting by $\psi(p)$ the angle between $p$ and the positive $z$-axis and by $\abs{p}$ the Euclidean distance to the origin, we  have that for every $t\in [0,T)$
\beq  \label{claim:disjoint}
w_t(\Cyl)\cap \{ p\in\R^3: \abs{p}=r_\mathrm{max} \text{ and }\psi(p)\in [\tfrac{\pi}{4},\tfrac{3\pi}{4}]\}=\emptyset.
\eeq
\end{lemma}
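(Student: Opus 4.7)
By rotational symmetry each map $w_t$ is determined by its meridian curve $c_t(x)=(r_t(x),z_t(x))$ in the half-plane $\{r\geq 0\}$ with fixed endpoints $c_t(\pm 1)=(r_0,\pm z_0)$, and the $g_\Nt$-area is
\beqs
\Area_{g_\Nt}(w_t)=2\pi\int_{-1}^{1} r_t(x)\,\rho_\Nt^2(R_t(x))\,\sqrt{(r_t')^2+(z_t')^2}\,dx,\qquad R_t:=\sqrt{r_t^2+z_t^2}.
\eeqs
In these coordinates the forbidden set from \eqref{claim:disjoint} becomes the arc $\mathcal{A}=\{(r,z):r^2+z^2=r_\mathrm{max}^2,\ r\geq r_\mathrm{max}/\sqrt{2}\}$. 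The plan is to prove \eqref{claim:disjoint} first, by a continuity-and-area contradiction exploiting that $\rho_\Nt^2(r_\mathrm{max})\to\infty$ as $C_\Nt\to\infty$, and then deduce \eqref{claim:lemma-disj-1} via a coarea estimate in the half-plane.

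For \eqref{claim:disjoint}, argue by contradiction. Since $w_0$ has image outside $B_1\supset\mathcal{A}$, if some $w_t$ meets $\mathcal{A}$ then continuity of the family gives a first hitting time $t_*>0$ and a point $x_*\in[-1,1]$ with $R_{t_*}(x_*)=r_\mathrm{max}$ and $r_{t_*}(x_*)\geq r_\mathrm{max}/\sqrt{2}$. Since $R_{t_*}(\pm 1)=\sqrt{r_0^2+z_0^2}>1$, the intermediate value theorem yields a minimal $x'\in(x_*,1]$ (up to reversing orientation) with $R_{t_*}(x')=1$, and continuity of $r_{t_*}$ and $R_{t_*}$ produces a subinterval $I\subseteq[x_*,x']$ on which $r_{t_*}\geq r_\mathrm{max}/(2\sqrt{2})$ and $R_{t_*}\in[r_\mathrm{max},r_\mathrm{max}+\eta_0]$, with Euclidean arc length at least $\kappa_0$; here $\eta_0,\kappa_0>0$ depend only on the limiting value $r_\mathrm{max}\to(\sqrt{5}-1)/2$, not on $C_\Nt$ (either $r_{t_*}$ drops by $r_\mathrm{max}/(2\sqrt{2})$ over $[x_*,x_*']$, giving arc length $\geq r_\mathrm{max}/(2\sqrt 2)$, or $R_{t_*}$ increases by at least $\eta_0$, giving arc length $\geq\eta_0$). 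The explicit form of \eqref{def:g_N_tilde} shows that $\rho_\Nt^2(R)\geq\tfrac{1}{2}\rho_\Nt^2(r_\mathrm{max})$ for $R\in[r_\mathrm{max},r_\mathrm{max}+\eta_0]$ once $\eta_0$ is chosen small enough (independently of $C_\Nt$), and therefore
\beqs
\Area_{g_\Nt}(w_{t_*})\geq 2\pi\int_I r_{t_*}\,\rho_\Nt^2(R_{t_*})\,\sqrt{(r_{t_*}')^2+(z_{t_*}')^2}\,dx\geq c_0\,\rho_\Nt^2(r_\mathrm{max})
\eeqs
for a constant $c_0>0$ depending only on $\eta_0,\kappa_0$ and $r_\mathrm{max}$. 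Since $\rho_\Nt^2(r_\mathrm{max})\to\infty$ as $C_\Nt\to\infty$, choosing $C_\Nt$ sufficiently large (depending on $E_0$ and $z_0$) contradicts $\Area_{g_\Nt}(w_{t_*})\leq E_0$ and establishes \eqref{claim:disjoint}.

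For \eqref{claim:lemma-disj-1}, use \eqref{claim:disjoint}: whenever $R_t(x)=r_\mathrm{max}$ we must have $|z_t(x)|>r_\mathrm{max}/\sqrt{2}$, so for every $z^*\in[-r_\mathrm{max}/\sqrt{2},r_\mathrm{max}/\sqrt{2}]$ every $x\in z_t^{-1}(z^*)$ satisfies $R_t(x)\neq r_\mathrm{max}$. Inspecting the open set $V_t=\{x:R_t(x)<r_\mathrm{max}\}$ together with the endpoint condition $R_t(\pm 1)>1$ shows that for each such $z^*$ either (a) some $x\in z_t^{-1}(z^*)$ lies outside $V_t$, giving $r_t(x)>\sqrt{r_\mathrm{max}^2-(z^*)^2}\geq r_\mathrm{max}/\sqrt{2}$, or (b) all preimages lie in $V_t$, where $\rho_\Nt^2>\rho_\Nt^2(r_\mathrm{max})$ and a variant of the \eqref{claim:disjoint} argument yields a large area contribution from the traversal of $V_t$. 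The coarea-type bound $\Area_{g_\Nt}(w_t)\geq 2\pi\int r_t\,\rho_\Nt^2(R_t)\,|z_t'|\,dx$, after integrating the pointwise lower bound over the $z$-band and adding the endpoint contribution where $r_t\approx r_0$, then yields $\Area_{g_\Nt}(w_t)\geq 2\pi$.

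The main obstacle is the quantitative step in \eqref{claim:disjoint}: identifying $\eta_0$ and the multiplicative constant in the bound $\rho_\Nt^2(R)\geq\tfrac{1}{2}\rho_\Nt^2(r_\mathrm{max})$ from the explicit bump in \eqref{def:g_N_tilde}, uniformly in $C_\Nt$, and making rigorous the lower bound $\kappa_0$ on the arc length via a Lipschitz-type argument on $r_{t_*}$. For \eqref{claim:lemma-disj-1}, case (b) above is the most delicate: the density of $\rho_\Nt^2$ inside $B_1$ must compensate for the possible smallness of $r_t$ near the $z$-axis, and this likely requires a separate quantitative lower bound that is independent of how deep into $V_t$ the meridian curve dips.
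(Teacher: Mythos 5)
Your proof of \eqref{claim:disjoint} is essentially the paper's argument (a huge conformal factor on a fixed arclength of the meridian passing near the sphere $\{|p|=r_\mathrm{max}\}$), though as written the interval extraction has a small glitch: you restrict to $R\in[r_\mathrm{max},r_\mathrm{max}+\eta_0]$, but nothing prevents the meridian from diving immediately to $R<r_\mathrm{max}$ after the touching point, so you cannot guarantee such a one-sided subinterval has definite arclength. The paper sidesteps this by simply taking a fixed arclength interval around the touching point and allowing $R$ to vary in $[r_\mathrm{max}-\eps, r_\mathrm{max}+\eps]$ on both sides; since $\rho_\Nt$ is larger below $r_\mathrm{max}$ this costs nothing, and since $|\psi|$ can change by at most $2\eps$ on such an interval (using $|\gamma|\cdot|\psi'|\leq 1$), the integrand stays comparable to $\rho_\Nt^2(3/4)$. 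Replacing your one-sided annulus by a two-sided one fixes this; also note that you do not actually need the first-hitting-time structure for \eqref{claim:disjoint}, since the argument shows any rotationally symmetric surface with the given boundary data whose meridian touches the arc has area $>E_0$.

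For \eqref{claim:lemma-disj-1}, however, your proposed route is genuinely different from the paper's and has a real gap, which you yourself flag as case (b). If all preimages of a level $z^*$ lie in $V_t=\{R<r_\mathrm{max}\}$, the factor $\rho_\Nt^2$ is indeed large there, but the radial weight $r_t$ in the area element $2\pi\int r_t\,\rho_\Nt^2\,\sqrt{(r_t')^2+(z_t')^2}\,dx$ can be arbitrarily small: the meridian may pass arbitrarily close to (or touch) the $z$-axis, and then the rotated surface pinches and contributes negligible area no matter how large $\rho_\Nt^2$ is. The bump in \eqref{def:g_N_tilde} is bounded (it equals $C_\Nt e^{1-1/(1-r^2)}+1$, not $1/r$), so there is no compensating divergence near the axis, and the coarea estimate cannot be closed by a pointwise bound. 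This is not a technicality but the central obstruction, and the paper takes a different and more topological route precisely to avoid it. The paper first extracts, for \emph{every} $t$, an interval $[x_1^t,x_2^t]$ on which $|w_t|>r_\mathrm{max}$ \emph{and} $\psi(w_t)\in(\tfrac\pi4,\tfrac{3\pi}4)$ with $\psi$ hitting both endpoints $\tfrac\pi4$ and $\tfrac{3\pi}4$: for $w_0$ (which avoids the unit ball) the number of such traversal intervals is odd because the two boundary circles sit on opposite sides of the cone band; and by \eqref{claim:disjoint} together with the fixed boundary data, intervals of this type can only appear or disappear in pairs along the continuous family, so the count stays odd and hence nonzero. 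On such a traversal the radial coordinate is automatically bounded below by $r_\mathrm{max}\sin(\tfrac\pi4)>\tfrac12$, which removes exactly the difficulty in your case (b); the area of the rotated traversal is then estimated by $2\pi\min(|\gamma|^2\rho_\Nt^2)\int\sin\psi\,|\psi'|\geq 2\sqrt2\,\pi\,r_\mathrm{min}^2\rho_\Nt(r_\mathrm{min})^2\geq 2\pi$ once $C_\Nt$ is large enough that $r_\mathrm{min}^2\geq 2^{-1/2}$. So the key idea you are missing is this parity/continuity argument that locates the area where $r$ is already bounded below, rather than trying to wring area out of the thin region near the axis.
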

\begin{proof}
We will first explain how \eqref{claim:disjoint} implies \eqref{claim:lemma-disj-1} (for $C_\Nt$ sufficiently large), and provide the proof of \eqref{claim:disjoint} at the end.  
We  note that \eqref{claim:disjoint} allows us to conclude that  
 for each $t \in [0,T)$, there must be an interval $[x_1^t,x_2^t] \subset [-1,1]$ such that 
 $$|w_t| > r_\mathrm{max} \text{ and } \psi(w_t)\in (\tfrac{\pi}{4},\tfrac{3\pi}{4})  \text{ on } (x_1^t,x_2^t) \times S^1\text{ with  } \psi(w_t)( \{x_1^t,x_2^t\}) = \{\tfrac{\pi}{4},\tfrac{3\pi}{4}\}.$$
To see this, we note that the number of such intervals for the initial $w_0$ must be odd as by assumption $\abs{w_0}\geq 1>r_\mathrm{max}$ on all of $\Cyl$  and as the 
points $w_t(\{\pm1\}\times S^1)$ of the fixed boundary circles have $\psi$ coordinate in $(0,\frac\pi4]$ respectively $[\frac{3\pi}{4},\pi)$ since $r_0\leq z_0$. 
Combined with \eqref{claim:disjoint} this property of the boundary data ensures also that along a family of maps $(w_t)$ as in the lemma such intervals can only be lost or gained in pairs, so their  number remains odd, and so nonzero.

The claim \eqref{claim:lemma-disj-1} hence follows if we prove a lower bound of $2\pi$ for the area of any given surface $S_\gamma$ that is obtained by 
rotating a curve $\gamma=\abs{\gamma}e^{\i\psi}\in \R^2$, parametrised over some $[a,b]$, 
around the $z$-axis, for which $\psi(a)=\frac{\pi}{4}$, $\psi(b)=\frac{3\pi}{4}$ and $\abs{\gamma}\geq r_\mathrm{max}$ as well as $\psi\in (\frac{\pi}{4},\frac{3\pi}{4})$ on $ (a,b)$. 
Given such a $\gamma$, say parametrised by arclength and hence with  $\abs{\gamma} \cdot \abs{ \psi'} \leq 1$, we can  estimate
\beqas 
\Area_{(\Nt,\gNt)}(S_\gamma)&=
2\pi \int_{a}^{b} \rho_{\Nt}^2\circ\gamma \cdot |\gamma| \sin (\psi) \geq 2\pi \min_{[a,b]}\big(|\gamma|^2  \rho_{\Nt}^2\circ \gamma\big)  \int_a^b \sin (\psi) \abs{\psi'} \\
&\geq 4\pi  r_\mathrm{min}^2\,
\rho_{\Nt}(r_\mathrm{min})^2   \cos(\frac{\pi}{4}) \geq 2\sqrt{2} \pi  r_\mathrm{min}^2\geq 2\pi
\eeqas
for $C_\Nt$ sufficiently large, where we used that the minimum of $r\mapsto r^2 \rho_{\Nt}^2(r)$ on $[r_\mathrm{max},\infty)$ is achieved at $r_\mathrm{min}$ and that $r_\mathrm{min}\upto 1$ as $C_{\Nt}\to \infty$.

Finally, to prove the claim  \eqref{claim:disjoint} we show that for
$C_\Nt$ chosen sufficiently large, the area of any rotationally symmetric surface with the given boundary conditions 
that violates 
\eqref{claim:disjoint} 
 must have area larger than $E_0$. 
So let  $\gamma=\abs{\gamma} e^{\i \psi}$ be any smooth curve in $\R^2$, say parametrised by arclength over some $[a,b]$,
 with $\gamma(a)= (-z_0,r_0)$ and $\gamma(b)= (z_0, r_0)$ and suppose that there exists $t_0\in (a,b)$ so that $\gamma(t_0)=r_\mathrm{max} e^{\i\psi_1}$ for some $\psi_1\in [\frac{\pi}{4},\frac{3\pi}{4}]$. 

Let now $\eps:=\min(z_0-1,\frac1{10})>0$ which ensures in particular that $[t_0-\eps,t_0+\eps]\subset [a,b]$. 
Since $r_\mathrm{max}\downto \frac{\sqrt{5}-1}{2}\approx
0.62$ as $C_\Nt\to\infty$, we may assume that $C_\Nt$ is large enough to ensure that  
$\half+\frac1{10}\leq r_\mathrm{max}\leq \frac34-\frac1{10}$ so that the above choice of $\eps$ furthermore implies that
$$\thalf\leq  r_\mathrm{max}-\eps\leq \abs{\gamma}\leq r_\mathrm{max}+\eps\leq \tfrac34
\text{ and }
\psi\in [\tfrac{\pi}{4}-2\eps,\tfrac{3\pi}{4}+2\eps]\subset [\tfrac{\pi}{6},\tfrac{5\pi}{6}] \text{ on } [t_0-\eps,t_0+\eps].$$ 
We hence obtain 
\beqas 
\Area_{(\Nt,\gNt)}(S_\gamma)&\geq \Area_{(\Nt,\gNt)}(S_{\gamma\vert_{[t_0-\eps,t_0+\eps]}})= 
2\pi \int_{t_0-\eps}^{t_0+\eps} 
\rho_{\Nt}^2\circ\gamma\cdot |\gamma| \sin (\psi)
\\
&\geq 4\pi \eps C_{\Nt} \exp \big(1-(1-(\tfrac34)^2)^{-1}\big)\cdot \tfrac{1}{4} 
\eeqas
which yields that $\Area_{(\Nt,\gNt)}(S_\gamma)>E_0$ 
provided $C_\Nt=C_\Nt(E_0,z_0)$ is chosen sufficiently large. This completes the proof of the claim \eqref{claim:disjoint} and thus of the lemma. 
\end{proof}
We now construct our target as a warped product 
\beq \label{def:warped_prod} 
N = \R \times_f \Nt \text{ with metric } g_N = dv^2 + f(v)\cdot g_{\Nt},\eeq
where the warping function is of the form $f=f_0(\cdot - \bar v )$,  $\bar v>0$ chosen later, for $f_0$ as in
\begin{ass}\label{ass:f}
We let $f_0 \colon \R \to [1,\infty)$ be
a non-increasing function that satisfies 
$
f_0\equiv 8 \text{ on } (-\infty,0]$  and $f_0>7$  on  $(-\infty,1)$
for which $-f_0'$ is decreasing on $[1,\infty)$ with 
$0<-f_0'\leq \frac18$ and that has  one of the following two types of asymptotic behaviours: 
\begin{enumerate}[label=(\roman*),ref=\roman*,nosep]
\item \label{ass:f-poly}
For some $\de\in(0,1)$ and $c_3,\La>0$ we have that 
$$ 
f_0(v)= 1 + c_3 v^{-(\frac2{1+\de}-1)} \text{ on  } [\La,\infty)  .
$$
\item \label{ass:f-exp}
For some $\al,c_3,\La>0$ we have that 
 \beqs 
f_0(v)= 1 + c_3 e^{-\al (v-\La)}  \text{ on  } [\La,\infty) .
\eeqs
\end{enumerate}
\end{ass}

We observe that in both cases the resulting target $(N,g_N)$ is complete and has bounded curvature, where by construction the curvature bound $\bar \kappa$ is independent of the choice of $\bar v$ and will use functions with asymptotics \eqref{ass:f-poly} to prove the first part of Theorem \ref{thm:cyl}, and functions with asymptotics \eqref{ass:f-exp} in the proof of the second part of the theorem.

In the following we consider maps $u = (v,w) \colon \Cyl=[-1,1]\times S^1 \to N$ with symmetries
\beq \label{def:symms} v = v(x)=v(-x) \cm \qquad w(x,\th) = \big(r(x)e^{\i\th},z(x)\big) \text{ with }  r(x)=r(-x) \text{ and } z(-x)=-z(x)\eeq
and fixed boundary data of
\beq \label{def:bdry}
v(\pm1)=0 \text{ and } r(\pm 1)=\frac14, \quad z(\pm1)=\pm z_0>1. 
\eeq
In general, to evolve a given initial map $u_0$ from the cylinder towards a minimal surface with prescribed boundary curves, one needs to consider \eqref{eq:flow} together with Plateau boundary condition. However, if our initial map 
 $u_0=(v_0,w_0)$ is of the above form and if we choose the  initial metric $g_0=G_{\bar \ell}$ on the cylinder as described in \eqref{def:Gell}, then due to the symmetries this problem is reduced to solving \eqref{eq:flow} with \emph{Dirichlet} boundary conditions
\beq u(\pm1,\th) = u_0(\pm1,\th) \eeq
and the evolution of the metric 
reduces to an ODE  for the length $\ell(t)$ of the central geodesic with $g(t) = G_{\ell(t)}$ as described in \eqref{def:Gell}. Thus
 standard parabolic theory yields the existence of a (unique) solution $(u,g)$ of the flow \eqref{eq:flow} 
 along which the energy decays according to \eqref{eq:energy-decay}. 
 We note that this solution remains smooth for all times unless 
$\inj(\Cyl,g(t))\to 0$ as $t\upto T<\infty$, as a singularity of just the map component, which would need to be caused by the bubbling off of \emph{finitely} many harmonic spheres, is excluded by the symmetries.

We will thus be able to conclude that the flow degenerates in finite time provided we can establish that the assumptions \eqref{ass:ell_0} and \eqref{ass:tens_gives_length} of Theorem \ref{thm:1} hold. 
To this end we fix the constant $C_{\Nt}$ in the definition of $(\Nt,\gNt)$ so that Lemma \ref{lemma:disjoint} applies for maps of area no more than $10\pi$ and choose the initial data as follows.

\begin{lemma} \label{lemma:initial}
Let $(N,g_N)$ be as in \eqref{def:warped_prod}, where $\bar{v}>0$ is any fixed number. Then there is a smooth map $u_0 = (v_0,w_0) \colon \Cyl \to N$ with the symmetries \eqref{def:symms} and boundary data \eqref{def:bdry} that satisfies $\abs{w_0}\geq 1$ on all of $\Cyl$, and a metric $g_0 = G_{\bar \ell}$ as defined in \eqref{def:Gell} such that $E(u_0,g_0) \leq 10\pi.$
\end{lemma}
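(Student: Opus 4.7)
The plan is to construct $u_0 = (v_0, w_0)$ together with $g_0 = G_{\bar\ell}$ (with $\bar\ell$ small) so that most of the Dirichlet energy of $w_0$ lives in a region where $v_0$ is large and hence $f(v_0)$ is close to $1$, while still matching the prescribed boundary data and keeping $|w_0| \geq 1$. The crucial observation is that Lemma \ref{lemma:disjoint} forces any admissible $w_0$ to have $g_\Nt$-area at least $2\pi$, so with $v_0 \equiv 0$ the warping factor $f(0) = 8$ would make the total energy at least $16\pi > 10\pi$; hence $v_0$ must be raised in the interior to shrink the effective warping, balanced against the Dirichlet cost of $v_0$. The latter cost can be made small by choosing $\bar\ell$ small enough that the hyperbolic cylinder has large modulus.

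Concretely, I would choose $\bar\ell$ small and work in isometric collar coordinates $(s,\theta) \in [-X, X] \times S^1$ for $(\Cyl, G_{\bar\ell})$, with $X = X(\bar\ell)$ large. Writing $w_0(s,\theta) = (r_0(s)\cos\theta, r_0(s)\sin\theta, z_0(s))$, conformal invariance of the 2D Dirichlet energy gives
\[
E(u_0, g_0) = \pi \int_{-X}^X \bigl( v_0'(s)^2 + f(v_0(s))\,(r_0'(s)^2 + z_0'(s)^2 + r_0(s)^2) \bigr)\, ds.
\]
Take $v_0$ even with $v_0 \equiv M$ on $|s|\leq X - \delta$ and linear interpolation to $0$ on $X - \delta \leq |s| \leq X$, with $M$ large enough (depending on $\bar v$ and the decay of $f_0$, achievable in either case of Assumption~\ref{ass:f}) that $f(M)$ is close to $1$. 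Construct $(r_0, z_0)$ in three regions: a fixed central cap on $|s| \leq S$ tracing a smooth rotationally symmetric surface just outside the Euclidean unit ball (for instance a circular arc of radius slightly greater than $1$ joined to short tangent segments), with endpoints $(r_0(\pm S), z_0(\pm S)) = (0, \pm z^\ast)$ for some $z^\ast > 1$ and Dirichlet energy close to the forced lower bound $2$; a degenerate ``needle'' along the $z$-axis on $S \leq |s| \leq X - \delta$ (setting $r_0 \equiv 0$, $z_0 \equiv \pm z^\ast$), which contributes nothing to the energy; and a short smooth interpolation on $X - \delta \leq |s| \leq X$ to the boundary values $(1/4, \pm z_0)$. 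The condition $|w_0| \geq 1$ holds pointwise: on the cap by design, on the needle since $z^\ast \geq 1$, and on the transition since $|z_0|$ varies between $z^\ast$ and $z_0$, both $\geq 1$.

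The $v_0$-contribution is $2M^2/\delta$, the cap contributes $f(M)\cdot(2 + o(1))$, and the boundary transition contributes at most $C/\delta + C\delta$ (absorbing the bound $f\leq 8$ into $C$), so
\[
E(u_0, g_0) \leq \pi\bigl(2 f(M) + 2M^2/\delta + C/\delta + C\delta + o(1)\bigr).
\]
Choosing $\delta$ to balance the terms in $\delta$, and then $\bar\ell$ small enough that $X$ accommodates this $\delta$, the right-hand side can be made $\leq 10\pi$. The main obstacle is the tightness of this bound: the forced minimum area $2\pi$ combined with $f(0)=8$ makes the three-way balance between area, warping, and $v_0$-transition cost feasible only because we are free to choose $\bar\ell$ small and $M$ suitably large in accordance with the decay rate of $f_0$.
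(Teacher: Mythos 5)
Your budget calculation contains two errors that make the construction fail, both related to the fact that you try to save energy by making $f(M)\approx 1$ instead of following the paper's more conservative accounting.

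\textbf{The cap costs at least $4\pi$, not $\approx 2\pi$.} You cite the $2\pi$ lower bound from Lemma \ref{lemma:disjoint} as the target for the cap's energy, but that bound applies to possibly \emph{deformed} maps which need only satisfy $|w_t|\geq r_\mathrm{max}$ on the cone. Your initial cap must satisfy the much stronger constraint $|w_0|\geq 1$ everywhere, and a rotationally symmetric surface running from $(0,z^*)$ to $(0,-z^*)$ while staying outside the closed unit ball has Euclidean area at least $4\pi$ (the radial projection $x\mapsto x/|x|$ onto $S^2$ is surjective and area-non-increasing for $|x|\geq 1$). Since Dirichlet energy dominates area, your cap contributes at least $4\pi f(M)$ regardless of parametrization. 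Concretely, for a conformal sphere parametrization one has $\int (r_0'^2+z_0'^2+r_0^2)\,ds = 4$, not $2$. This alone doubles a term in your budget.

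\textbf{The co-located transitions are incompatible when $\bar v$ is large.} You transition both $v_0$ (from $M$ to $0$) and $(r_0,z_0)$ (from $(0,\pm z^*)$ to $(1/4,\pm z_0)$) on the \emph{same} region $[X-\delta,X]$. The $v_0$-cost $\sim M^2/\delta$ wants $\delta$ large, while the angular term $\int_{X-\delta}^{X} f(v_0) r_0^2\,ds$, which is of order $\delta\cdot\max f/16$, wants $\delta$ small. The optimum of $2M^2/\delta + C/\delta + C\delta$ over $\delta$ is of order $M$. Since the lemma must hold for arbitrary $\bar v$, and since reaching any fixed value of $f$ (even $f(M)=2$, let alone $f(M)\approx 1$) requires $M\gtrsim\bar v$, your transition cost grows without bound in $\bar v$ and cannot stay below $10\pi - 4\pi f(M)$. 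The paper avoids this by (a) parametrizing the boundary annulus conformally over a short region of fixed modulus $\log(1/4\eps)$, so its energy equals the annulus area $\pi/16$ independently of $X$; and (b) running the $v_0$ transition over the entire remaining `needle' region $[\Lambda_1(\eps),X(\ell)]$, whose length $\sim X(\ell)\sim 1/\ell$ can be made as large as needed, so the $v_0$-cost is $\sim (v^*)^2/X(\ell)\to 0$. Decoupling the two transitions in this way is essential. Finally, the paper settles for $f(v^*)=2$ rather than $f\approx 1$, giving $8\pi$ from the sphere plus at most $\pi$ from the annuli plus errors, which is both sufficient and avoids the need for large $v^*$.
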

\begin{proof}
For $\eps>0$ and $\ell>0$ determined below, we 
construct such a map 
$$u_0=(v_0,w_0) \colon (\Cyl,G_\ell)\cong \big([-X(\ell),X(\ell)]\times S^1, \rho_\ell^2 (ds^2+d\th^2)\big) \to (N,g_N)$$ as follows:
We first let $\La_1(\eps)>1$ be the unique number for which $S^2 \setminus B_{\eps}(P^{\pm})$, $P^\pm:=(0,0,\pm 1)$, is conformal to the cylinder $[-\La_1(\eps)+1, \La_1(\eps)-1]\times S^1$ and choose $w_0$ on this central part of the collar to be such a conformal parametrisation, which we can assume to have the symmetries as in \eqref{def:symms}, and which is chosen so that $\{\pm (\La_1(\eps)-1)\}\times S^1$ is mapped to the corresponding boundary curve $S^2\cap \partial B_{\eps}(P^\pm)\subset \R^3$.

We then let $\La_2(\eps)$ 
be the unique number so that 
we can choose  $w_0$ on $ [X(\ell)-(\La_2(\eps)-1),X(\ell)]\times S^1$ to be a conformal parametrisation of the annulus $\overline{D}_{\frac14} \setminus D_{\eps}\times \{z_0\}$, again with the required symmetry and boundary conditions \eqref{def:symms} and \eqref{def:bdry}. 
Here we of course assume that $\ell=\ell(\eps)>0$ is small enough so that $\La_1(\eps)+\La_2(\eps)-2\leq X(\ell)$, but furthermore ask that $\ell$ is sufficiently small so that choosing
$w_0\vert_{[\La_1(\eps),X(\ell)-\La_2(\eps)]\times S^1}(s,\th):= (0,0,1+\frac{s-\La_1(\eps)}{X(\ell)-\La_1(\eps)-\La_2(\eps)} (z_0-1))$ as 
 a linear parametrisation of the line connecting $P^+$ and $(0,0,z_0)$ gives energy 
$$E(w\vert_{[\La_1(\eps),X(\ell)-\La_2(\eps)]\times S^1})= \pi (z_0-1)^2(X(\ell)-\La_1(\eps)-\La_2(\eps))^{-1}\leq \eps.$$ 
We finally complete $w_0$ to a smooth map with the desired symmetries which can be done so that 
on the intermediate regions $[\La_1(\eps)-1,\La_1(\eps)]\times S^1$ and $[X(\ell)- \La_2(\eps), X(\ell) -(\La_2(\eps)-1)]\times S^1$ we have  $\abs{\na w_0}\leq C\eps$ for some universal $C>0$. 

To construct a suitable $v$ component for the initial data, we first fix $v^*=v^*(\bar v)$ so that $f(v^*)=2$. We then ask that 
$v_0 \colon [-X(\ell),X(\ell)]\times S^1 \to \R $ is a smooth map with $v_0(s,\th)=v_0(s)=v_0(-s)$ so that 
$v_0(\pm X(\ell)) = 0$,  
$v_0\equiv v^*$ on the central part $|s| < \La_1(\eps)$, and  
 $|v_0'| \leq 2\frac{v^*}{X(\ell)-\La_1(\eps)}$.
For $\ell=\ell(\eps,\bar v)>0$ small enough, we hence obtain that also 
$E(v_0)=2\pi\frac{4 (v_0^*)^2}{X(\ell)-\La_1(\eps)}\leq \eps$. In total we hence obtain an energy of no more than 
\beqas E(u_0,g_0) &\leq    2 \, \max_\R f \cdot \Area(D_{\frac14} \setminus D_{\eps}) + f(v_0^*) \Area(S^2) + C\eps \leq 9\pi   + C\eps \leq 10 \pi \eeqas 
provided we initially chose $\eps>0$ sufficiently small.
\end{proof}

We note that this choice of initial data assures that along the flow the energy of  $u(t) \colon (\Cyl,g(t))\to (N,g_N)$ remains below $10\pi$. 
As the warping function $f$ satisfies $f\geq 1$ everywhere, we thus obtain in particular an upper bound  of $10\pi$ on the energy, and thus on the area, of the component $w(t) \colon (\Cyl,g(t))\to (\Nt,\gNt)$ allowing us to apply Lemma \ref{lemma:disjoint} (since we chose $C_\Nt$ accordingly). 
We hence know that along the flow the component $w(t)$ remains disjoint from the set described in \eqref{claim:disjoint} and hence in particular that points on the central curve $\{s=0\}$ are mapped to points which have Euclidean distance at least $r\geq r_\mathrm{max}$ from the $z$-axis.

While Lemma \ref{lemma:disjoint} already yields a lower bound of $2\pi$ on the area, and thus on the energy, of the whole second component $w$, we now prove that already the restriction of $w$ to a small central region of the domain will need to have at least energy $2\pi$.

\begin{lemma}\label{lemma:energy-centre}
Let $w \colon \Cyl=[-1,1]\times S^1\to (\Nt,\gNt)$ be any map with symmetries and boundary conditions as in \eqref{def:symms} and \eqref{def:bdry} which satisfies \eqref{claim:disjoint} 
and which is so that $\abs{w}\geq r_\mathrm{max}$ on ${\{0\}\times S^1}$. 
Let now $G_\ell$ be any metric on $\Cyl$ as described in \eqref{def:Gell} and  let $(s,\th)\in[-X(\ell),X(\ell)]\times S^1$ be the corresponding collar coordinates, where we assume that $\ell$ is small enough so that $X(\ell)\geq 8$. Then we have a lower bound on 
the energy of the restriction of $w$ to the central part of
\beq 
\label{est:energy-centre} 
E(w\vert_{\{(s,\th): \abs{s}\leq 8\}})=\half\int_{-8}^8\int_{S^1} (\abs{w_s}^2+\abs{w_\th}^2) \, \rho_\Nt^2 \, d\th \, ds \geq 2\pi.\eeq
\end{lemma}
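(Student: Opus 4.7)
The plan is to adapt the area-lower-bound argument from the proof of Lemma \ref{lemma:disjoint} to the localised region $\{|s|\leq 8\}$, exploiting the extra hypothesis $|w|\geq r_\mathrm{max}$ at $s=0$ together with the symmetries \eqref{def:symms}.

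First, using $z(-s)=-z(s)$ to deduce $z(0)=0$, one sees that $w(0,\th)=(r(0)e^{\i\th},0)$ lies in the equatorial plane, so $\psi(w(0,\th))=\pi/2\in[\pi/4,3\pi/4]$. Together with the hypothesis $|w(0,\th)|=r(0)\geq r_\mathrm{max}$, the disjointness condition \eqref{claim:disjoint} then upgrades this to the strict inequality $r(0)>r_\mathrm{max}$. I would then introduce the ``first exit time''
\beqs
s^* := \sup\bigl\{s\in[0,8]: r(s')>r_\mathrm{max}\text{ and }\psi(w(s',\th))\in(\tfrac{\pi}4,\tfrac{3\pi}4)\text{ for every }s'\in[0,s]\bigr\},
\eeqs
which is strictly positive by continuity, and split into the cases $s^*=8$ and $s^*<8$.

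If $s^*=8$, then $r(-s)=r(s)$ extends $r\geq r_\mathrm{max}$ to all of $[-8,8]$, and the trivial bound $E(w|_{\{|s|\leq 8\}}) \geq \pi \int_{-8}^{8} r(s)^2 \rho_\Nt^2(r(s))\,ds \geq 16\pi \, r_\mathrm{min}^2 \rho_\Nt^2(r_\mathrm{min})$ exceeds $2\pi$ comfortably, since $\rho_\Nt^2\geq 1$ and for the fixed (large) choice of $C_\Nt$ from Lemma \ref{lemma:disjoint} we have $r_\mathrm{min}^2\geq 1/\sqrt{2}$. If instead $s^*<8$, continuity forces one of the two defining inequalities to be violated at $s^*$; the disjointness \eqref{claim:disjoint} excludes $r(s^*)=r_\mathrm{max}$, so $\psi(w(s^*,\th))\in\{\pi/4,3\pi/4\}$. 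By the $z\mapsto -z$ symmetry, the image curve on $[-s^*,s^*]$ sweeps $\psi$ from $3\pi/4$ down to $\pi/4$ while keeping $|w|\geq r_\mathrm{max}$; the area computation from the proof of Lemma \ref{lemma:disjoint}, applied verbatim, yields $\Area_{(\Nt,g_\Nt)}(w|_{[-s^*,s^*]}) \geq 2\sqrt{2}\pi \, r_\mathrm{min}^2 \geq 2\pi$. The conformal inequality $\tfrac12(|w_s|^2+|w_\th|^2)\geq |w_s\wedge w_\th|$ (i.e.\ $E\geq\Area$) then delivers the claim, since $[-s^*,s^*]\subset[-8,8]$.

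The main obstacle is the case dichotomy at $s=s^*$: one must rule out the exit being caused by $|w|$ dropping to $r_\mathrm{max}$ at some $s^*<8$, since this alone would give no sweep in $\psi$. This is precisely what the disjointness property \eqref{claim:disjoint} was built to exclude, and the only real work lies in unwinding continuity carefully at $s^*$ to invoke it.
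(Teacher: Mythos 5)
Your overall strategy matches the paper's: split into the case where the image of $\{|s|\leq 8\}$ stays strictly inside the open cone $\{\psi\in(\pi/4,3\pi/4)\}$ (giving a direct angular-energy bound) and the case where it exits through $\{\psi\in\{\pi/4,3\pi/4\}\}$ (invoking the sweep/area estimate from the proof of Lemma~\ref{lemma:disjoint}), with the disjointness \eqref{claim:disjoint} plus connectedness ruling out the third possibility of leaving via the sphere $\{|p|=r_\mathrm{max}\}$.

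There is, however, a genuine confusion between the cylindrical radial coordinate $r(s)$ from the ansatz $w(s,\theta)=(r(s)e^{\i\theta},z(s))$, i.e.\ the distance to the $z$-axis, and the spherical radius $|w(s,\theta)|=\sqrt{r(s)^2+z(s)^2}$ that actually appears in \eqref{claim:disjoint} (and in $\rho_{\Nt}$, which is a function of $|x|$, not of the distance to the $z$-axis). At $s=0$ these coincide since $z(0)=0$, but not elsewhere. If in the definition of $s^*$ you mean cylindrical $r(s')>r_\mathrm{max}$, then in the case $s^*<8$ the disjointness condition does \emph{not} exclude $r(s^*)=r_\mathrm{max}$: for $\psi(w(s^*))\in(\pi/4,3\pi/4)\setminus\{\pi/2\}$ one has $|w(s^*)|=r(s^*)/\sin\psi>r_\mathrm{max}$, so \eqref{claim:disjoint} says nothing and the dichotomy breaks. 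If instead you mean $|w(s')|>r_\mathrm{max}$, the $s^*<8$ case is fine, but in the $s^*=8$ case you only get the cylindrical bound $r(s)=|w|\sin\psi\geq r_\mathrm{max}\sin(\pi/4)$, not $r(s)\geq r_\mathrm{max}$; consequently the invocation of the minimum of $t\mapsto t^2\rho_{\Nt}^2(t)$ over $[r_\mathrm{max},\infty)$ (to get the factor $r_\mathrm{min}^2\rho_{\Nt}^2(r_\mathrm{min})$) is not justified, and moreover $\rho_{\Nt}^2$ should be evaluated at $|w(s,\theta)|$, not at $r(s)$. The fix is exactly what the paper does in this case: use only $\rho_{\Nt}\geq1$ and $r(s)\geq r_\mathrm{max}/\sqrt2$, which gives $E\geq \pi\int_{-8}^{8}r(s)^2\,ds\geq 8\pi r_\mathrm{max}^2>2\pi$ since $r_\mathrm{max}>\tfrac12$. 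With the spherical interpretation of the defining condition and this replacement estimate, your argument goes through and is essentially the paper's proof.
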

\begin{proof}
If the image of $w\vert_{\{(s,\th): \abs{s}\leq 8\}}$ intersects the cone $\{p: \psi(p)=\frac{\pi}4 \text{ or } \psi(p)=\frac{3\pi}{4}\}$ then we know from the proof of Lemma \ref{lemma:disjoint} that the area of the image of this map in $(\Nt,\gNt)$ is at least $2\pi$ and hence so is its energy. Otherwise, the image of $w\vert_{\{(s,\th): \abs{s}\leq 8\}}$ is contained in $\{p: \abs{p}\geq r_\mathrm{max} \text{ and } \psi(p)\in (\frac\pi4,\frac{3\pi}{4})\}$ which assures in particular that the radial component of $w(s,\th)=(r(s)e^{\i\th},z(s))$ is at least $r(s)\geq r_\mathrm{max}\cdot \sin(\frac\pi4)$. As $\rho_\Nt\geq 1$, we hence obtain that 
$$E(w\vert_{\{(s,\th): \abs{s}\leq 8\}})\geq \half \int_{-8}^{8}\int_{S^1}\abs{w_\th}^2 \, d\th \, ds \geq 16\pi\cdot \half r_\mathrm{max}^2> 2\pi, \text{ as } r_\mathrm{max}>\half.\qedhere$$
\end{proof}
To decrease the energy of the map $u=(v,w)$ into the warped product $\R\times_f (\Nt,\gNt)$, 
the flow  thus wants to stretch out the $v$ component so that the energy of $w$ on this central part is weighted with a small warping factor $f(v)$. This stretched-out image of the $v$ component will in turn drive the degeneration of the metric as proven in  Theorem \ref{thm:1}. 

We will obtain the required lower bound on $\Len(u(t),\Col(\si(t)))$  in 
Lemma \ref{lemma:leash_length}, where we will use in particular that 
the $v$ component must map the central region to the part of the line where  $-f'$ is decreasing. 

\begin{lemma} \label{lemma:v-initial-bound}
Let $(N,g_N)$ be defined as in \eqref{def:warped_prod}  where $f=f_0(\cdot-{\bar v})$ for some $\bar v\geq 7$ and some smooth function $f$ satisfying Assumption \ref{ass:f}. Let $(u_0,g_0)$ be initial data as obtained in Lemma \ref{lemma:initial} and let $(u=(v,w),g=G_{\ell})$ be the corresponding solution of \eqref{eq:flow}. 
Then on the whole existence interval we have  $\ell\leq \ell_2(\bar v):=\frac{5\pi^2}{\bar v^{2}}
 $ and furthermore
\beq 
\label{est:v-centre} 
v(s)\geq \bar v +1 \text{ for every } \abs{s}\leq 8\eeq
 where $(s,\th)$ are the collar coordinates of $(\Cyl,g)$.

\end{lemma}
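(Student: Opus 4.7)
The two claims will be proved together by a continuity argument, with the length bound following almost for free from a pointwise lower bound on $v(0,t)$. Combining the warped-product decomposition of the Dirichlet energy with the energy monotonicity $E(u(t),g(t)) \leq E(u_0, g_0) \leq 10\pi$ gives $\pi\int_{-X(\ell)}^{X(\ell)} v_s^2 \, ds \leq 10\pi$, and the symmetry $v(s) = v(-s)$ together with $v(\pm X(\ell)) = 0$ and Cauchy--Schwarz yields
\[
v(0, t)^2 \;\leq\; X(\ell(t))\int_0^{X(\ell)} v_s^2\, ds \;\leq\; 5\,X(\ell(t)) \;\leq\; 5\pi^2/\ell(t),
\]
so $v(0, t) \geq \bar v + 1$ immediately forces $\ell(t) \leq 5\pi^2/(\bar v+1)^2 < 5\pi^2/\bar v^2$. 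The problem thus reduces to proving the pointwise bound $v(s, t) \geq \bar v + 1$ on $\abs{s} \leq 8$ throughout the existence interval.

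For this, the key observation is that for a warped-product target the $v$-component of the tension is
\[
\tau_v \;=\; \Delta_g v \;-\; \tfrac{1}{2} f'(v) \abs{dw}_g^2 \;\geq\; \Delta_g v,
\]
using $-f_0' \geq 0$ from Assumption \ref{ass:f}, so that $v$ is a parabolic supersolution of the heat equation. I introduce $T^*$ as the supremum of times $t \in [0, T_\mathrm{max})$ on which both $\ell(\tau) \leq 5\pi^2/\bar v^2$ and $v(\cdot, \tau) \geq \bar v + 1$ on $[-8, 8]$ hold throughout $[0, t]$. By Lemma \ref{lemma:initial} with $\eps$ chosen sufficiently small (so that $\La_1(\eps) > 8$), the initial data satisfies $v_0 \equiv v^* > \bar v + 1$ on $[-\La_1(\eps), \La_1(\eps)] \supset [-8, 8]$ (the strict inequality from $f(v^*)=2<7$) and $\ell(0)=\bar\ell$ is small, so both conditions hold strictly at $t=0$ and $T^* > 0$.

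If $T^* < T_\mathrm{max}$, one of the two conditions fails first. If the length bound fails, continuity gives $v(0, T^*) \geq \bar v + 1$, and the Cauchy--Schwarz bound yields $\ell(T^*) < 5\pi^2/\bar v^2$, contradicting $\ell(T^*) = 5\pi^2/\bar v^2$. Otherwise $v(s_0, T^*) = \bar v + 1$ for some first-touching $s_0 \in [-8, 8]$, so $(s_0, T^*)$ is a minimum of $v$ on $[-8, 8] \times [0, T^*]$ with value $\bar v + 1$. For $s_0$ in the open interval $(-8, 8)$, the parabolic strong minimum principle applied on $(-8, 8) \times (0, T^*)$ propagates $v \equiv \bar v + 1$ all the way back to $t = 0$, contradicting the strict initial inequality $v_0 = v^* > \bar v + 1$ on $(-8, 8)$.

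The main technical obstacle will be ruling out the boundary case $s_0 = \pm 8$, where the strong minimum principle cannot be applied directly inside $[-8, 8]$. The plan is to enlarge the monitoring interval to $[-L_0, L_0]$ with $8 < L_0 \leq \La_1(\eps)$ (on which the initial data is still strictly $> \bar v + 1$), apply the same strong-minimum-principle argument to handle interior touching on this larger interval, and address the remaining boundary case $s_0 = \pm L_0$ via the parabolic Hopf boundary-point lemma combined with the symmetry $v(s)=v(-s)$ and the energy-concentration bound $E(w|_{\abs{s}\leq 8}) \geq 2\pi$ of Lemma \ref{lemma:energy-centre} (which applies at each $t$ since continuity and Lemma \ref{lemma:disjoint} force $r(0,t) > r_\mathrm{max}$ from the initial $\abs{w_0(0)}=1$ and the symmetry $z(0,t)=0$). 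Once the boundary case is resolved, $T^* = T_\mathrm{max}$ and both conclusions of the lemma hold throughout the maximal existence interval.
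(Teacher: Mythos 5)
Your proposal takes a genuinely different route — a parabolic continuity/maximum-principle argument — whereas the paper's proof is entirely static, applied at each fixed time $t$: from Lemma \ref{lemma:disjoint} one has $E(w;g_{\Nt})\geq 2\pi$, so $10\pi\geq E(u,g)\geq f(\max v)\cdot 2\pi$ forces $f(\max v)\leq 5$, hence $\max v\geq\bar v+1$; Cauchy--Schwarz on $\int|v'|$ then gives the length bound; having $X(\ell)\geq 8$, Lemma \ref{lemma:energy-centre} gives $\min_{[-8,8]}f(v)\leq 5$, and the bound $|f'|\leq\frac18$ together with $\int_0^8|v'|\leq(\frac{1}{2\pi}E)^{\frac12}\sqrt8$ controls $\osc_{[-8,8]}f(v)<1$, so $\max_{[-8,8]}f(v)<6<7$ and thus $v\geq\bar v+1$ on $[-8,8]$. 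That oscillation estimate is the paper's key idea and it sidesteps every piece of parabolic PDE theory you invoke.

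The gap in your argument is the boundary case, and it is not just a technicality you can defer. You acknowledge that if the touching point is $s_0=\pm L_0$ the strong minimum principle does not apply and suggest the Hopf boundary-point lemma; but the Hopf lemma only controls the sign of the inward normal derivative at a boundary minimum — it gives no contradiction, since nothing prevents the minimum of $v$ on $[-L_0,L_0]$ from being attained at $s=\pm L_0$ (indeed $v$ decreases to $0$ at $s=\pm X(\ell)$, so it must dip below $\bar v+1$ somewhere on $(8,X(\ell)]$, and why should the crossing point stay outside $L_0$?). The symmetry $v(s)=v(-s)$ helps only at $s=0$, not at $s=\pm L_0$. A second problem you do not flag: the collar coordinates $(s,\theta)$ are defined via $f_{\ell(t)}$ and hence are time-dependent, so $\{|s|\leq 8\}$ is a moving subset of the fixed cylinder $[-1,1]\times S^1$. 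Your maximum-principle argument is phrased as if the spatial domain were fixed; on a moving domain the minimum of a supersolution can decrease whenever the domain grows, and since $\ell(t)$ is not known a priori to be monotone the domain may indeed grow. Both issues are avoided by the paper's pointwise oscillation bound, which needs only the energy inequality and is insensitive to how the coordinates evolve.
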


\begin{proof}
Lemma \ref{lemma:disjoint} assures that at any time 
the energy of $w \colon (\Cyl,g)\to (\Nt,\gNt)$ is at least $2\pi$ and hence 
$10\pi\geq E(u,g)\geq \min_{\Cyl} f(v)\cdot  2\pi=2\pi f(\max v)$. This ensures that $f(\max v)\leq 5$ and hence $\max v\geq \bar v+1$. 
As $v(\pm X(\ell))=0$ we hence get
\beqs 
2\bar v\leq 2 \max v\leq \int_{-X(\ell)}^{X(\ell)} |v'| \, ds \leq (\pi^{-1} E(u,g))^{\half}\cdot (2X(\ell))^{\half}\leq (20 X(\ell))^{\half},
\eeqs
and thus $\frac{\pi^2}{\ell}\geq X(\ell)\geq \frac15 \bar v^{2}$, which implies the first claim of the lemma. 
This estimate ensures in particular that $X(\ell)\geq 8$ for $\bar v\geq 7$. By 
 Lemma \ref{lemma:energy-centre}, thus already the restriction of $w$ to the central region $\{(s,\th): \abs{s}\leq 8\}$ must have energy at least $2\pi$ and hence
$\min_{[-8,8]}f\circ v\leq 5$. As $\abs{f'}\leq \frac18$ and $v(x)=v(-x)$ we conclude that
$\osc_{[-8,8]}f\circ v\leq \frac18 \int_0^8\abs{v'} \, ds\leq \frac18\cdot\big(\frac{1}{2\pi} E(u,g)\big)^{\half} \cdot \sqrt{8} \leq  \sqrt{\tfrac{5}{8}}.$
Therefore $\max_{[-8,8]}f(v(s))\leq 6$ and hence $v(s)\geq \bar v+1$ for every $\abs{s}\leq 8$ as claimed.
\end{proof}

We are now in a position to prove that the image of maps as considered above stretches out as required in the setting of our main results.

\begin{lemma} \label{lemma:leash_length}
For any $c_3,\La>0$ and $\de\in ( 0,1)$, respectively $c_3,\La,\al>0$, 
 there  exist $\ell_1 , c_0 > 0$ such that the following holds true:
\newline
Let $(N,g_N)$ be defined as in \eqref{def:warped_prod}
for a warping function $f=f_0(\cdot-\bar v)$, $\bar v\geq 7$,  where $f_0$  satisfies Assumption \ref{ass:f} either with decay \eqref{ass:f-poly} (for $c_3,\La,\de$)  or decay \eqref{ass:f-exp} (for $c_3,\La,\al$).
Let 
 $u \colon (\Cyl,g=G_\ell) \to (N,g_N)$ be a map with tension 
$\| \tau_g(u) \|_{L^2(\Cyl,g)} \leq 1$ and energy 
 $E(u,g) \leq 10\pi$
for which \eqref{est:energy-centre} and \eqref{est:v-centre}  hold and suppose that $\ell\leq \ell_1$.
Then in the case that $f_0$ has polynomial decay
\beq \label{eq:leash_delta_1} v_\mathrm{max} := \max_{\Col(\ell)} v \geq c_0 \, \ell^{-\frac14(1+\de)}, \eeq
while in the case that
 $f_0$ has exponential decay
\beq \label{eq:leash_delta_2} v_\mathrm{max} := \max_{\Col(\ell)} v \geq c_0 \log (\ell^{-1}). \eeq
\end{lemma}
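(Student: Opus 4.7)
The plan is to derive a representation of $v_\mathrm{max}$ as a weighted integral involving $(-f'(v))\,\abs{dw}^2_{\gE,\gNt}$, estimate the error arising from the tension, and then invert using the asymptotic form of $f_0$ together with the central $w$-energy estimate \eqref{est:energy-centre}. From the Christoffel symbols of the warped product $(N,g_N)$ and the symmetries \eqref{def:symms} (which ensure that $v=v(s)$ and that $\abs{dw}^2_{\gE,\gNt}$ is $\theta$-independent), the $v$-component of the tension reduces to the ODE $\tau^v_\gE(s) = v''(s) - \tfrac12 f'(v(s))\,\abs{dw(s)}^2_{\gE,\gNt}$; integrating twice using $v'(0)=0$ (from the evenness of $v$) and the boundary values $v(\pm X(\ell))=0$ yields
\beqs
v_\mathrm{max} = \tfrac12\int_0^{X(\ell)}(-f'(v(s)))\,\abs{dw(s)}^2_{\gE,\gNt}\,(X(\ell)-s)\,ds - \int_0^{X(\ell)}\tau^v_\gE(s)\,(X(\ell)-s)\,ds.
\eeqs

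Second, I would estimate the tension error: writing $\tau^v_\gE = \rho_\ell^2\,\tau^v_g$ and using Cauchy--Schwarz with $\|\tau_g(u)\|_{L^2(\Cyl,g)}\leq 1$ gives
\beqs
\Big|\int_0^{X(\ell)}\tau^v_\gE(s)(X(\ell)-s)\,ds\Big| \leq C\Big(\int_0^{X(\ell)}(X(\ell)-s)^2\rho_\ell^2(s)\,ds\Big)^{1/2}.
\eeqs
A change of variables $u = \pi/2 - \ell s/(2\pi)$ transforms the right-hand integral into $\tfrac{2\pi}{\ell}\int_{u_X}^{\pi/2}(u-u_X)^2/\sin^2 u\,du$, whose integrand is bounded by a universal constant, so the full expression is $O(1/\ell)$ and the tension error is at most $C/\sqrt{\ell}$.

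Third, I would bound the main integral from below. Since $v\leq v_\mathrm{max}$ and $-f'$ is non-increasing on $[\bar v+1,\infty)$, we have $-f'(v)\geq -f'(v_\mathrm{max})$ on $|s|\leq 8$ by \eqref{est:v-centre}, while \eqref{est:energy-centre} gives $\int_0^8\abs{dw}^2_{\gE,\gNt}(s)\,ds \geq c_* > 0$, and for $\ell\leq\ell_1$ small enough $X(\ell)-s\geq c'/\ell$ throughout $|s|\leq 8$ (since $X(\ell)\sim 1/\ell$). The main integral is therefore bounded below by $(c/\ell)(-f'(v_\mathrm{max}))$, and substituting into the representation yields $v_\mathrm{max} + C/\sqrt\ell \gtrsim (1/\ell)(-f'(v_\mathrm{max}))$. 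Either $v_\mathrm{max}\geq c_*/\sqrt\ell$ already (in which case the claim $v_\mathrm{max}\geq c_0\ell^{-(1+\delta)/4}$ is immediate for $\delta<1$), or $-f'(v_\mathrm{max})\leq C\sqrt\ell$. Inverting the latter using the asymptotic form of $f_0$---polynomial $-f'(v)=c_3\tfrac{1-\delta}{1+\delta}(v-\bar v)^{-2/(1+\delta)}$ giving $v_\mathrm{max}-\bar v\geq c_0\ell^{-(1+\delta)/4}$, or exponential $-f'(v)=c_3\alpha\,e^{-\alpha(v-\bar v-\Lambda)}$ giving $v_\mathrm{max}\geq c_0\log(\ell^{-1})$---establishes \eqref{eq:leash_delta_1}, respectively \eqref{eq:leash_delta_2}; the case where $v_\mathrm{max}-\bar v < \Lambda$ (outside the asymptotic regime) is ruled out for $\ell\leq\ell_1$ small enough, since $-f'(v_\mathrm{max})$ would then be bounded below by a positive constant, contradicting $-f'(v_\mathrm{max})\leq C\sqrt\ell$.

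The main technical obstacle is the error estimate in the second step. The integral $\int_0^{X(\ell)}(X(\ell)-s)^2\,\rho_\ell^2\,ds$ is $O(1/\ell)$ rather than $O(1)$ because the conformal factor $\rho_\ell$ blows up near the boundary of the collar; the resulting $1/\sqrt{\ell}$ loss in the tension error is precisely what produces the exponent $-(1+\delta)/4$ in \eqref{eq:leash_delta_1}, as opposed to the sharper $-(1+\delta)/2$ that would follow from a pointwise balance of the tension ODE alone. Handling this requires the explicit change-of-variables computation above, after which the balance between the representation's main term and its error term---cleanly split according to whether $v_\mathrm{max}$ is dominated by $1/\sqrt\ell$ or dominates it---delivers the advertised exponent in both the polynomial and exponential cases.
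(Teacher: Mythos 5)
Your proposal is correct and follows essentially the same route as the paper: both arguments integrate the $v$-component of the tension equation using the symmetry $\partial_s v(0)=0$ and the boundary condition $v(\pm X(\ell))=0$, use \eqref{est:energy-centre} and \eqref{est:v-centre} to produce the main term $\sim\ell^{-1}(-f'(v_\mathrm{max}))$, bound the tension contribution by $O(\ell^{-1/2})$ via Cauchy--Schwarz, and then invert the asymptotic form of $-f_0'$ after a case split. The only cosmetic differences are that you derive an explicit double-integral representation with kernel $(X(\ell)-s)$ and split on whether $v_\mathrm{max}\gtrsim\ell^{-1/2}$, whereas the paper integrates once to bound $-\partial_sv$, integrates again, and splits on whether the main term dominates the error $\ell^{-1/2}$.
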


\begin{proof}
The  $v$ component of the tension field is given in collar coordinates $(s,\th)$ by 
\beq \label{eq:v_tens} \tau_g(u)^v  = \rho^{-2}\DD{ss} v - f'(v) \, e_g(w) \eeq
where $e_g(w)=\half \abs{dw}_g^2$ denotes the energy density of $w \colon (\Cyl,g)\to (\Nt,\gNt)$. 
As the symmetry of $v$ ensures that $\DD{s} v(0)=0$ we can integrate \eqref{eq:v_tens} over $([0,s]\times S^1,\rho^2(ds^2+d\th^2))$ to obtain that
\beqas 
- 2\pi \DD{s} v(s) &= - \int_0^s\int_{S^1}\DD{ss} v  \rho^{-2} \, dv_g 
= - \int_0^s \int_{S^1}  f'(v) \, e_g(w) + \tau_g(u)^v \, dv_g  \\
&\geq -f'(v_\mathrm{max}) E(w\vert_{[0,\min(s,8)]\times S^1}) - 
\Area([0,s]\times S^1, g)^{\half}
\eeqas
where we use that $\norm{\tau_g(u)}_{L^2(\Cyl,g)}\leq 1$ and that
$v([-8,8])$ is contained in $[\bar v+1,\infty)$  where $-f'$ is decreasing.  
Combined with \eqref{est:energy-centre} and \eqref{est:int_rho_by_ell}  we hence conclude that 
$$-\DD{s} v(s)\geq -\thalf f'(v_\mathrm{max})-\rho^{\half}(s)(2\pi)^{-\half} \text{ for } s\geq 8$$
and $-\DD{s} v(s)\geq -\rho^{\half}(s)(2\pi)^{-\half}$
for  $\abs{s}\leq 8$. 
Using 
\eqref{est:int_rho_by_ell} as well as that $X(\ell)\geq \frac{\pi^2}{\ell}-C$ we get 
\beqa \label{est:vmax-main}
v_\mathrm{max}&\geq -\thalf(X(\ell)-8) f'(v_\mathrm{max})-(2\pi)^{-\half}\int_0^{X(\ell)} \rho^{\half}(s) \, ds
\geq  -4\ell^{-1} f'(v_\mathrm{max})-4 \ell^{-\half}\\
&=-4\ell^{-1} f_0'(v_\mathrm{max}-\bar v)-4 \ell^{-\half}\geq 4(-f_0'(v_\mathrm{max})\ell^{-1}-\ell^{-\half})
,\eeqa
where we use that  $\ell\in (0,\ell_1)$ for 
a sufficiently small $\ell_1$ 
and that $-f'_0$ is decreasing on 
$ [1,\infty)$. 

 We first note that for $\ell_1\in(0,1)$ chosen sufficiently small we must have that 
$v_\mathrm{max}\geq \La$. 
Indeed, as $v_\mathrm{max} \geq \bar v + 1 \geq 1$ we would otherwise have that $-f'_0(v_\mathrm{max})\geq -f_0'(\La)=c>0$, where $c=c_3(\frac{2}{1+\de}-1)\La^{-\frac{2}{1+\de}}$ respectively $c=c_3\al$. Hence  
 \eqref{est:vmax-main} would imply that 
$\La> v_\mathrm{max}\geq 4 c \ell^{-1}-4\ell^{-\half}$
which leads to a contradiction as $\ell\leq \ell_1$ for  sufficiently small $\ell_1$. 

Suppose first that $f_0$ as in part \eqref{ass:f-poly} of  Assumption \ref{ass:f}.
If $-\thalf\ell^{-1}f_0'(v_\mathrm{max})\geq \ell^{-\half}$ then \eqref{est:vmax-main} gives
$$v_\mathrm{max}\geq -2 \ell^{-1}f_0'(v_\mathrm{max})=  2
c_3(\tfrac{2}{1+\de}-1)
v_\mathrm{max}^{-\frac{2}{1+\de}} \ell^{-1}, $$
which implies 
that $v_\mathrm{max}\geq c_0 \ell^{-\frac14(1+\de)}$ for some $c_0=c_0(\de,c_3)>0$  as 
$\frac4{1+\de} \geq 1 + \frac2{1+\de}$. This establishes the claim \eqref{eq:leash_delta_1} in this case. 
Conversely, if $-\thalf\ell^{-1}f_0'(v_\mathrm{max})\leq \ell^{-\half}$ then we get 
$$2\ell^\half\geq -f_0'(v_\mathrm{max})= 
c_3(\tfrac{2}{1+\de}-1)
v_\mathrm{max}^{-\frac{2}{1+\de}}$$
which again implies that \eqref{eq:leash_delta_1} holds true for some 
 $c_0=c_0(\de,c_3)>0$.

Suppose now that $f_0$ decays instead exponentially as in part \eqref{ass:f-exp} of the assumption. We hence know that $-f_0'(v_\mathrm{max})= \al c_3 e^{-\alpha (v_\mathrm{max}-\La)}$. If $-\thalf\ell^{-1}f_0'(v_\mathrm{max})\geq \ell^{-\half}$, then \eqref{est:vmax-main} yields 
$$v_\mathrm{max}\geq 2\al c_3 \ell^{-1}e^{-\alpha (v_\mathrm{max}-\La)}=c e^{-\al v_\mathrm{max}}\ell^{-1}, \text{ for some } c>0.$$
This implies in particular that 
$e^{2\al v_\mathrm{max}}\geq \tilde c \ell^{-1}$ for some $\tilde c>0$, which, for sufficiently small $\ell_1$, yields the  claim. Otherwise $-f_0'(v_\mathrm{max})\leq 2\ell^{\half}$ which again yields that $v_\mathrm{max}\geq c\log(\ell^{-1})$ for some $c>0$ that depends only on $\alpha, \La,c_3$, provided $\ell_1$ is sufficiently small. 
\end{proof}

We can now prove the first part of Theorem \ref{thm:cyl} as follows.
Let $\de\in (0,1)$ be any fixed number and let $f = f_0(\cdot-\bar v)$ for $f_0$ as in  Assumption \ref{ass:f} with polynomial decay \eqref{ass:f-poly} (for some $c_3,\La>0$). 
We first observe that the sectional curvature of $(N,g_N)$ is bounded by a constant $\bar \ka$ which is independent of $\bar v$ and recall that also the numbers $\ell_1>0$ and $c_0>0$ obtained in Lemma \ref{lemma:leash_length} do not depend on $\bar v$.
We furthermore fix $E_0 = 10\pi$ and $\eps_0 = 1$ and let now  $\bar \ell>0$ be the number obtained in 
Theorem \ref{thm:1} 
corresponding to this choice of $\bar\kappa$, $E_0$, $\de$ and $\eps_0$, where we stress that $\bar \ell$ is independent of $\bar v$ which we have yet to choose.

Theorem \ref{thm:1} hence assures that any  solution of \eqref{eq:flow} that satisfies the assumptions \eqref{ass:ell_0} (for this $\bar \ell$) and \eqref{ass:tens_gives_length} of the theorem must degenerate in finite time.
We now finally choose the parameter $\bar v$ by which we shift the warping function so that $\ell_2(\bar v)\leq \bar \ell$ holds true for the number $\ell_2(\bar v)=\frac{5\pi^2}{\bar v^{2}}$ obtained in Lemma \ref{lemma:v-initial-bound}. Choosing our initial data $(u_0,g_0)$ as in Lemma \ref{lemma:initial}, we obtain from Lemma \ref{lemma:v-initial-bound} that the resulting solution $(u,g)$ satisfies the first assumption of Theorem \ref{thm:1} for every time in the maximal existence interval $[0,T)$. This lemma also assures that \eqref{est:v-centre} is satisfied for every $t\in[0,T)$ while Lemma \ref{lemma:energy-centre} implies that \eqref{est:energy-centre} holds true. We can thus apply the first estimate \eqref{eq:leash_delta_1} from Lemma \ref{lemma:leash_length} to conclude that also the second assumption of the theorem is satisfied. Hence Theorem \ref{thm:1} implies that the solution of Teichm\"uller harmonic map flow for any such initial data must degenerate in finite time as claimed in the first part of Theorem \ref{thm:cyl}.

We finally construct a compact target for which solutions of \eqref{eq:flow-resc} degenerate in finite time. For this we will modify the construction of Topping \cite{T-winding} 
so that the resulting warped product 
$$(N_c,g_{N_c})=(T^2,\gamma)\times_F (\widehat N, g_{\widehat N})$$
contains a 
totally geodesic submanifold $\Si$ which is isometric to $[0,\infty)\times_f (\widehat{N}, g_{\widehat N})$, for a coupling function $f=f_0(\cdot -\bar v)$, $\bar v\geq 7$, where we now ask that $f_0$ is as in Assumption \ref{ass:f} with exponential decay \eqref{ass:f-exp} for $\al=2\pi$, see below for the precise choice of $(N_c,g_{N_c})$.  Here $(\widehat N, g_{\widehat N})$ is the 3-torus obtained by identifying opposite faces of a suitably large cuboid in $(\Nt,g_\Nt)$ around the origin. We note that alternatively one could also construct such a warped product with a sphere as done in \cite{T-winding} to obtain a target for which Theorem \ref{thm:cyl} holds.

To obtain the existence of a suitable solution of the flow \eqref{eq:flow-resc} we first note: 

\begin{rmk} \label{rmk:flow-rescaled}
For any $\ell\in(0,\infty)$ there exists a harmonic map $u_\ell$ from $([-1,1]\times S^1,G_\ell)$ to $\Si$ which satisfies the assumptions of Lemma \ref{lemma:leash_length}. Indeed such a map can e.g.~be obtained as a limit as $t\to \infty$ of a solution of the 
 classical harmonic map flow for an initial map $u_0$ chosen as in Lemma \ref{lemma:initial}. 
\end{rmk} 
 This allows us to obtain a solution $(u_{\ell(t)},G_\ell(t))$ of the rescaled flow \eqref{eq:flow-resc} for which we can now argue  exactly as in the proof of the first part of Theorem \ref{thm:cyl}
to conclude that it must degenerate in finite time, except that we  
now  apply Theorem \ref{thm:2} instead of Theorem \ref{thm:1} (and of course choose $\bar \ell$ accordingly) and that we use the second estimate \eqref{eq:leash_delta_2} of Lemma \ref{lemma:leash_length} instead of the first estimate \eqref{eq:leash_delta_1}  of that lemma. As $\Si$ is a totally geodesic submanifold of our compact target this hence yields an example of a solution to \eqref{eq:flow-resc} into a compact target that degenerates in finite time as claimed in the second part of Theorem \ref{thm:cyl}.

We finally explain how such a target can be chosen: 
 The torus component $(T^2,\gamma)$ of $(N_c,g_{N_c})$ is exactly as in  \cite{T-winding}, 
and, as in \cite{T-winding}, we consider the submanifold $\Si=\{ z=\frac{1}{w}, w\in(0,1]\}\times_F (\Nth,g_\Nth)$, which we recall is totally geodesic if the coupling function $F$ satisfies $\partial_x F \vert_{\{x=0\}}=0$ with respect to the coordinates $x=\frac{1}{w}-z$, $y=z$ used in \cite{T-winding}. 
We choose $F$ to be of the form 
\beqs 
F(x,y) =  h(x+y-\bar v)e^{-2\pi(x+y-\bar v-\La)} \big(\sin 2\pi \big(x - \tfrac18 \big) + \sqrt{2} \, \big) + k(x+y-\bar v)  + 1, \eeqs
for smooth functions $h,k\geq 0$, 
where we note that the case $h\equiv 1$ and $k\equiv 0$ would correspond to the coupling function used in \cite{T-winding}.
For our construction, we need
$\partial_x F\vert_{\{x=0\}} =0$, i.e. 
\beq
\label{eq:tot-geod}
k'(v)+e^{-2\pi(v-\La)}\tfrac{\sqrt{2}}{2}h'(v)=0,
\eeq
as well as that $f_0(v):=F(0,v+\bar v)=h(v)e^{-2\pi(v-\La)}\frac{\sqrt{2}}{2}+k(v)+1$ satisfies Assumption \ref{ass:f} with decay \eqref{ass:f-exp}. 
Suitable functions $h,k$ can e.g.~be obtained as follows:
We first select $k$ so that for $c_4$ chosen below and some $\La=\La(c_4)>0$ we have $k\equiv 7$ on $(-\infty,0]$, $k\equiv 0$ on $[\La,\infty)$ and so that the derivative of $k$ satisfies 
$0\leq -k'\leq e^{\pi}c_4$ on all of $\R$ with $-k'\equiv e^{\frac\pi2} c_4$ on $[\frac12,\frac34]$, $-k'\equiv \frac14 c_4$ on $[1,\La-1]$ as well as $-k''\leq 0$ on $[1,\La]$.

We then set $h(v):=\sqrt{2}\int_0^ve^{2\pi(t-\La)}\abs{k'(t)} dt$, which ensures not only that \eqref{eq:tot-geod} is satisfied, but also that $f_0$ has the behaviour on $(-\infty,0]$ and $[\La,\infty)$ required in Assumption \ref{ass:f}.
As \eqref{eq:tot-geod} furthermore implies that $-f_0'(v)=\sqrt{2}\pi e^{-2\pi(v-\La)}h(v)\geq 0 $ and as a short calculation shows that for $c_4$ sufficiently small also $-f_0'(v)\leq \frac18$ for all $v\in\R$, it remains to check that $-f_0''\leq 0$ on $[1,\infty)$ which is equivalent to $h'\leq 2\pi h$. This can be easily seen as the choice of $k$ ensures that $2\pi h(1)\geq h'(1)$ and that $2\pi h'-h''\geq 0$ on $[1,\infty)$.

\appendix

\section{Appendix}
\subsection{Teichm\"uller harmonic map flow from cylinders}
\label{appendix:cyl}
Here we recall the key features of the definition of the flow for cylindrical domains, see \cite{R-cyl} for more details. 
We recall that horizontal curves of metrics on $\Cyl=[-1,1]\times S^1$ are described in terms of hyperbolic metrics $G_\ell$ which are given by 
\beq \label{def:Gell}
G_\ell=f_\ell^{*}\big(\rho_\ell^2(ds^2+d\th^2)\big), \ f_\ell \colon \Cyl\to [-X(\ell),X(\ell)]\times S^1 \text{ for } X(\ell)=\tfrac{2\pi}{\ell}(\tfrac\pi2-\arctan(\tfrac\ell d)) 
\eeq 
where $f_\ell$ are the smooth diffeomorphisms defined in Lemma 2.4 of \cite{R-cyl}.
Here $d > 0$ is a fixed number which corresponds to the choice of uniformisation one uses to represent conformal classes by hyperbolic metrics with boundary curves of constant geodesic curvature, c.f.~\cite{R-boundary}, and can be characterised by 
\beq d = k_{G_\ell} |_{\partial M} \cdot L_{G_\ell}(\{\pm1\}\times S^1) \text{ or equivalently by }
d^2 = L_{G_\ell}(\{\pm1\}\times S^1)^2 - \ell^2 . \eeq
Horizontal curves of metrics are then given by one-parameter families of metrics of the form $G_{\ell(t)}$, in general pulled back by a fixed diffeomorphism.

While for Teichm\"uller harmonic map flow from closed surfaces one evolves the metric orthogonally to all diffeomorphisms and hence by a horizontal curve, for the flow from cylinders one needs to impose a three-point condition for the map component on both boundary curves. This forces us to additionally evolve the metric component of the flow in the direction of a six-parameter family $h_{b,\phi}$ of diffeomorphisms, whose restrictions to the boundary $h_{b,\phi}|_{\{\pm1\}\times S^1}$ are given by M\"obius transforms.
As we can assume without loss of generality that $g(0)$ is  of the form $h_{b_0,\phi_0}^*G_{\ell_0}$, 
the metric component of the flow is hence given by a curve $g(t) = h_{b,\phi}^*G_{\ell(t)}$ that evolves by \eqref{eq:flow} respectively \eqref{eq:flow-resc}, now with $P_g$ denoting the $L^2$-orthogonal projection onto the tangent space of the set 
 of admissible metrics $\{h_{b,\phi}^*G_\ell\}$. This tangent space is given by the $L^2$-orthogonal sum
$\Rea(\Hol(g)) \oplus \{ L_{(h_{b,\phi}^*\!X)} g : X \in \mathcal X \}, $
 where $\mathcal X$ is the six-dimensional space of vector fields generating the diffeomorphisms $h_{b,\phi}$, and  $\Hol(g)$ is the space of  holomorphic quadratic differentials on the cylinder which are real on the boundary. As a result, for the flows \eqref{eq:flow} and \eqref{eq:flow-resc} from cylinders, the evolution on $\ell$ is determined by the projection of the real part of the Hopf differential $\Phi(u,g)$ onto $\Rea(\Hol(g))$. 
In the special case that  $\Phi$ itself is holomorphic in the interior of the cylinder
as encountered in the proof of Theorem \ref{thm:2}, we hence simply have $\frac{d\ell}{dt}=-\frac{2\pi^2}{\ell}\Rea(b_0(\Phi))$, $b_0$ the principal part of the Fourier expansion of $\Phi$.

\subsection{Properties of hyperbolic collars}
\label{appendix:collar}
In this appendix we collect well-known results on hyperbolic collars that are used throughout the paper, including the classical collar lemma
\begin{lemma}[Keen--Randol \cite{randol}] \label{lemma:collar}
Let $(M,g)$ be a closed oriented hyperbolic surface and let $\si$ be a simple closed geodesic of length $\ell$. Then there is a neighbourhood $\Col(\si)$ around $\si$, a so-called collar, which is isometric to the 
cylinder 
$[-X(\ell),X(\ell)]\times S^1$
equipped with the metric $\rho^2(s)(ds^2+d\theta^2)$ where 
$\rho(s)=\rho_\ell(s)=\frac{\ell}{2\pi} \cos(\frac{\ell s}{2\pi})^{-1} $
 and
$X(\ell)=\frac{2\pi}{\ell}\big(\frac\pi2-\arctan\big(\sinh\big(\frac{\ell}{2}\big)\big) \big).$
\end{lemma}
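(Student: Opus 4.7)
The approach I would take is the classical one: lift to the universal cover, work in Fermi coordinates around a lift of $\sigma$, then perform an explicit conformal change of variables.

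First I would pass to the universal cover $\pi\colon\Hyp^2\to M$. Since $\sigma$ is a simple closed geodesic of length $\ell$, it lifts to a complete geodesic $\tilde\sigma\subset\Hyp^2$ stabilized by a primitive hyperbolic deck transformation $\gamma$ of translation length $\ell$. Introduce Fermi coordinates $(t,r)$ on $\Hyp^2$: $t$ is arclength along $\tilde\sigma$, and $r$ is the signed hyperbolic distance to $\tilde\sigma$ measured along the perpendicular geodesic through the point. A direct computation (solving the Jacobi equation $(\cosh r)'' = \cosh r$ for the normal Jacobi fields along $\tilde\sigma$) gives the hyperbolic metric in the form $dr^2+\cosh^2(r)\,dt^2$, and $\gamma$ acts by $(t,r)\mapsto(t+\ell,r)$, so the quotient of a $\gamma$-invariant Fermi tube is an abstract cylinder.

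The main obstacle, and the genuine content of the Keen--Randol theorem, is to identify the maximal half-width $w(\ell)$ such that the strip $\{|r|<w(\ell)\}$ projects injectively to $M$. To establish this I would argue as follows. Suppose two points in this strip are identified by a deck transformation $\eta$ not contained in $\langle\gamma\rangle$. Then $\eta\tilde\sigma\neq\tilde\sigma$, and by discreteness $\eta\tilde\sigma$ is another simple geodesic in $\Hyp^2$ projecting to a closed geodesic in $M$; an intersection of the strip with $\eta(\text{strip})$ would in particular produce a geodesic bigon or violate the standard $\sinh$-distance formula between $\tilde\sigma$ and $\eta\tilde\sigma$. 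The sharp version of this estimate (which one proves by computing in the upper half-plane, placing $\tilde\sigma$ as the imaginary axis and $\gamma$ as $z\mapsto e^\ell z$) shows injectivity holds for any $w$ with $\sinh(w)\sinh(\ell/2)\leq 1$, giving the optimal value $w(\ell)=\arsinh\!\big(1/\sinh(\ell/2)\big)$. Everything that follows is bookkeeping.

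Having the embedded Fermi tube, I would introduce the angular variable $\theta:=2\pi t/\ell\in S^1$ and a new transverse coordinate $s$ satisfying $ds=\operatorname{sech}(r)\cdot(2\pi/\ell)\,dr$, i.e. $s(r)=(2\pi/\ell)\,\mathrm{gd}(r)$, where $\mathrm{gd}$ is the Gudermannian characterized by $\cos(\mathrm{gd}(r))=\operatorname{sech}(r)$ and $\mathrm{gd}(r)=\arctan(\sinh r)$. With these substitutions one has simultaneously $\cosh^2(r)\,dt^2=(\ell\cosh(r)/2\pi)^2\,d\theta^2$ and $dr^2=(\ell\cosh(r)/2\pi)^2\,ds^2$, so the pulled-back metric is conformally flat with factor $\rho(s)=(\ell/2\pi)\cosh(r(s))=(\ell/2\pi)\sec(\ell s/2\pi)$, which is the stated form. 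Finally the range $|r|<w(\ell)$ transforms to $|s|<X(\ell)=(2\pi/\ell)\,\mathrm{gd}(w(\ell))$, and combining $\mathrm{gd}(w)=\arctan(\sinh w)$ with $\sinh(w(\ell))=1/\sinh(\ell/2)$ and the identity $\arctan(1/x)=\pi/2-\arctan(x)$ for $x>0$ yields $X(\ell)=(2\pi/\ell)\big(\pi/2-\arctan(\sinh(\ell/2))\big)$, completing the proof.
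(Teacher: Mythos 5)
The paper does not prove this statement --- it is the classical Keen--Randol collar lemma, quoted from \cite{randol} --- so your proposal can only be compared with the standard argument. Your reduction and all the bookkeeping are correct: Fermi coordinates give $dr^2+\cosh^2(r)\,dt^2$ with the deck transformation acting by $t\mapsto t+\ell$; the substitution $\theta=2\pi t/\ell$, $s=(2\pi/\ell)\arctan(\sinh r)$ is conformal with factor $\rho(s)=\tfrac{\ell}{2\pi}\cos(\tfrac{\ell s}{2\pi})^{-1}$; and the half-width $w(\ell)=\arsinh\big(1/\sinh(\ell/2)\big)$ transforms, via $\arctan(1/x)=\tfrac\pi2-\arctan(x)$, into exactly the stated $X(\ell)$.

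The gap is at the step you yourself identify as the whole content: that the strip $\{|r|<w(\ell)\}$ injects into $M$. As written this is asserted, not proved. There is no off-the-shelf ``$\sinh$-distance formula between $\tilde\sigma$ and $\eta\tilde\sigma$'': the inequality $d(\tilde\sigma,\eta\tilde\sigma)\geq 2\,\arsinh\big(1/\sinh(\ell/2)\big)$ for every deck transformation $\eta\notin\langle\gamma\rangle$ \emph{is} the collar lemma, and a computation that only normalises $\gamma$ as $z\mapsto e^\ell z$ on the imaginary axis cannot yield it, since it never uses $\eta$, discreteness, or the simplicity of $\sigma$. A complete argument needs all three. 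Simplicity gives that the distinct lift $\eta\tilde\sigma$ is disjoint from $\tilde\sigma$ (this also disposes of your ``bigon'' alternative), and discreteness rules out the two lifts being asymptotic, so $d:=d(\tilde\sigma,\eta\tilde\sigma)>0$. Then, setting $\delta=\eta\gamma\eta^{-1}$ (hyperbolic of the same translation length $\ell$, with axis $\eta\tilde\sigma$), the standard $\mathrm{SL}(2,\R)$ computation for two translations of length $\ell$ along disjoint geodesics at distance $d$ shows that one of $\gamma\delta^{\pm1}$ has half-trace $\big|\cosh(d)\sinh^2(\ell/2)-\cosh^2(\ell/2)\big|$; since the deck group is discrete and torsion-free it contains no elliptic elements, and $\gamma\neq\delta$ (else $\eta$ would stabilise $\tilde\sigma$ and lie in $\langle\gamma\rangle$), so this quantity is at least $1$, forcing $\cosh(d)\geq 1+2/\sinh^2(\ell/2)=\cosh\big(2w(\ell)\big)$, i.e.\ $d\geq 2w(\ell)$, which is exactly the injectivity you need. (Alternatively one can argue as Buser does, via a pants decomposition and right-angled hexagon trigonometry, or follow Randol's original proof.) With such a lemma supplied, your change of variables does complete the proof; without it, the decisive estimate is circular.
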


Throughout the paper we use the following estimates on hyperbolic cylinders with $\ell\in (0,\arsinh(1))$ as considered in \eqref{eq:collar-general} and \eqref{def:constants-collar} that hold true for constants that depend at most
on the numbers $\bar c_{1,2}$ from \eqref{def:constants-collar}.
We have a uniform upper bound on $\rho(s)$, say $\rho(s)\leq C_3$, and thus also on $\abs{\partial_s\log(\rho))}\leq \rho$ allowing us in particular to bound 
\beq \label{est:rho-different-points-1}
\rho(s)\leq e^{C_3 \abs{s-s'}} \rho(s') \text{ for any } s,s'\in [-X(\ell),X(\ell)].
\eeq
For points for which $\rho$ is small we use instead that  
$\abs{\partial_s\log(\rho))}\leq \rho$ also implies that  
\beq \label{est:rho-different-points-2}
e^{-1} \cdot \rho(s_0)\leq \rho(s)\leq e \cdot \rho(s_0) \text{ for all } s,s_0\in [-X(\ell),X(\ell)] \text{ with } \abs{s-s_0}\leq e^{-1}\rho^{-1}(s_0).
\eeq
Indeed, if for any $s_0$ 
the maximal number $h>0$ for which the above estimate holds true on the interval $I=[s_0,s_0+h]$ were less than $\min(e^{-1}\rho^{-1}(s_0), X-s_0)$, then 
$1=\abs{\log(\rho(s_0+h))-\log(\rho(s_0))}\leq h \sup_I\rho\leq h e \rho(s_0)<1$ would lead to a contradiction.

We shall also use that for any $0\leq 2\ell\leq \eps < \inj_g(X(\ell))$ the $\eps\thin$ part of such a hyperbolic collar is described in collar coordinates by $(-X_\eps(\ell),X_\eps(\ell))\times S^1$, where 
\beq \label{eq:X-delta}
X_\eps:= \tfrac{2\pi}{\ell}\big[\tfrac{\pi}{2}-\arcsin \big(\sinh(\tfrac{\ell}{2})\cdot (\sinh \eps)^{-1}\big)] \text{ and hence so that } \frac{C}{\eps}\geq X(\ell)-X_\eps(\ell)\geq \frac{c}{\eps} 
\eeq
for some $C,c>0$. 
A short calculation furthermore shows that 
\beq \label{est:int_rho_by_ell} \int_0^{X(\ell)} \rho^\frac{1}{2} \leq 2\sqrt{2}\pi \, \ell^{-\frac{1}{2}} \text{ and }
\Area([0,s]\times S^1)=2\pi \int_0^s\rho^2 \leq 2\pi \rho(s).
\eeq

{\sc CR \& MR: 
Mathematical Institute, University of Oxford, Oxford, OX2 6GG, UK


\begin{thebibliography}{99}
\parskip 1pt
\setlength{\itemsep}{1pt plus 0.3ex}


\bibitem{Ding-Li-Liu} W.~Ding, J.~Li and Q.~Liu, \emph{Evolution of minimal torus in Riemannian manifolds}, Invent.~Math.~\textbf{165} (2006) 225--242.
 
 \bibitem{DT}  W.~Y.~Ding and G.~Tian, \emph{Energy identity for a class of approximate harmonic maps from surfaces}, Comm.~Anal.~Geom.~{\bf 3} (1995) 543--554.
%

\bibitem{Huxol} T.~Huxol,  \textit{Limiting behaviour of the Teichm\"uller harmonic map flow}, Preprint (2017) \url{http://arXiv.org/abs/1711.08844}.

 \bibitem{HRT} T.~Huxol, M.~Rupflin and P.~M.~Topping, \emph{Refined asymptotics of the Teichm\"uller harmonic map flow into general targets},
Calc.~Var.~PDE (2016), 55--85, \textit{doi:10.1007/s00526-016-1019-2 }.

 \bibitem{randol} B.~Randol, \emph{Cylinders in Riemann surfaces},
 Comment.~Math.~Helvetici \textbf{54} (1979) 1--5.

 
\bibitem{RT} M.~Rupflin and P.~M.~Topping, \textit{Flowing maps to minimal surfaces}, 
American Journal of Math.~{\bf 138} (2016) 1095--1115.


 
\bibitem{Rexistence} M.~Rupflin, 
\emph{Flowing maps to minimal surfaces: Existence and uniqueness of solutions}, Ann.~I.~H.~Poincar\'e-AN {\bf 31} (2014) 349--368.

 
\bibitem{RT-neg} M.~Rupflin and P.~M.~Topping, 
 \emph{Teichm\"uller harmonic map flow into nonpositively curved targets},  J.~Diff.~Geom.~{\bf 108} (2018), 135--184.
 
\bibitem{R-boundary} M.~Rupflin, \emph{ Hyperbolic metrics on surfaces with boundary}, Preprint (2018) \url{http://arXiv.org/abs/1807.04464}.

 \bibitem{RT-horizontal}  M.~Rupflin and P.~M.~Topping, \emph{Horizontal curves of hyperbolic metrics},  Calc.~Var.~(2018) 57: 106. \textit{doi.org/10.1007/s00526-018-1386-y}.

\bibitem{RT-global} M.~Rupflin and P.~M.~Topping, \emph{Global weak solutions of the Teichm\"uller harmonic map flow into general targets}, To appear in Analysis and PDE, \url{http://arXiv.org/abs/1709.01881}.
 
 
\bibitem{R-cyl} M.~Rupflin, \emph{ Teichm\"uller harmonic map flow from cylinders},  Math.~Ann.~\textbf{368} (2017), 1227--1276.

 
\bibitem{RTZ} M.~Rupflin, P.~M.~Topping and M.~Zhu,
 \textit{Asymptotics of the Teichm\"uller harmonic map flow}, 
 Advances in Math.~{\bf 244} (2013) 874--893.
 
 
 \bibitem{Struwe85} M.~Struwe, \emph{On the evolution of harmonic mappings of Riemannian surfaces}, Comment.~Math.~Helv.~{\bf 60} (1985) 558--581.
 
\bibitem{T-winding} P.~M.~Topping, 
\emph{Winding behaviour of 
finite-time singularities of the harmonic map heat flow}, 
Math.~Z.~{\bf 247} (2004) 279--302. 

\bibitem{Wolpert} S.~Wolpert, 
\emph{Behavior of geodesic-length functions on Teichm\"uller space}, J.~Diff.~Geom.~{\bf 79} (2008) 277--334.


\end{thebibliography}
\end{document}